\documentclass[10pt, a4]{article}

\usepackage{graphics,amsmath,amssymb,amsthm,mathrsfs}
\usepackage[showonlyrefs]{mathtools}
\usepackage{hyperref}
\usepackage{graphicx,color}
\usepackage{xcolor}

\usepackage[showonlyrefs]{mathtools}  %% cites only the labels which are quoted
\mathtoolsset{showonlyrefs=true}

  %for paper content

\setlength{\paperwidth}{8.5in}
\setlength{\paperheight}{11.0in}
\setlength{\textwidth}{6.5in}
\setlength{\textheight}{9.0in}
\setlength{\oddsidemargin}{0in}
\setlength{\evensidemargin}{0in}
\setlength{\topmargin}{0in}
\setlength{\headsep}{0.0in}
\setlength{\headheight}{0.0in}
\setlength{\marginparwidth}{0in}
\setlength{\marginparsep}{0in}

\newtheorem{thm}{Theorem}[section]
\newtheorem{lemma}[thm]{Lemma}
\newtheorem{prop}[thm]{Proposition}
\newtheorem{cor}[thm]{Corollary}
\theoremstyle{definition}
\newtheorem{remark}[thm]{Remark}

\def\R{\mathbb{R}}

\numberwithin{equation}{section}

\begin{document}

\bibliographystyle{amsplain}

\title{Existence of least energy positive solutions to Schr\"{o}dinger systems with mixed competition and cooperation terms: the critical case \\ }

\author{Hugo Tavares $^a$   \quad  Song You $^{b,a}$\\
 \small{ $^a$ CMAFcIO \& Departamento de Matem\'atica, Faculdade de Ci\^encias da Universidade de Lisboa,}\\
         \small{ Campo Grande, 1749-016 Lisboa, Portugal}\\
    \small{$^b$  School of Mathematics and Statistics, Lanzhou University, Lanzhou,}\\
         \small{ Gansu 730000, People's Republic of China}\\
  }
\footnotetext[1]{ E-mails: hrtavares@ciencias.ulisboa.pt (H. Tavares), yous16@lzu.edu.cn (S. You) }
\date{ }
\maketitle
\begin{abstract}
In this paper we investigate the existence of solutions to the following Schr\"{o}dinger system in the critical case
\begin{equation*}
-\Delta u_{i}+\lambda_{i}u_{i}=u_{i}\sum_{j = 1}^{d}\beta_{ij}u_{j}^{2} \text{ in } \Omega, \quad u_i=0 \text{ on } \partial \Omega, \qquad i=1,...,d.
\end{equation*}
Here, $\Omega\subset \mathbb{R}^{4}$ is a smooth bounded domain, $d\geq 2$, $-\lambda_{1}(\Omega)<\lambda_{i}<0$ and  $\beta_{ii}>0$ for every $i$, $\beta_{ij}=\beta_{ji}$ for $i\neq j$, where $\lambda_{1}(\Omega)$ is the first eigenvalue of $-\Delta$ with Dirichlet boundary conditions. Under the assumption that the components are divided into $m$ groups, and that $\beta_{ij}\geq 0$ (cooperation) whenever components $i$ and $j$ belong to the same group, while $\beta_{ij}<0$ or $\beta_{ij}$ is positive and small (competition or weak cooperation) for components $i$ and $j$ belonging to different groups, we establish the existence of nonnegative solutions with $m$ nontrivial components, as well as classification results. Moreover, under additional assumptions on $\beta_{ij}$, we establish existence of least energy positive solutions in the case of mixed cooperation and competition. The proof is done by induction on the number of groups, and requires new estimates comparing energy levels of the system with those of appropriate sub-systems. In the case $\Omega=\R^4$ and $\lambda_1=\ldots=\lambda_m=0$, we present new nonexistence results. This paper can be seen as the counterpart of [Soave-Tavares, J. Differential Equations 261 (2016), 505-537] in the critical case, while extending and improving some results from [Chen-Zou, Arch. Ration. Mech. Anal. 205 (2012), 515--551], [Guo-Luo-Zou, Nonlinearity 31 (2018), 314--339].

\vspace{0.2cm}
\noindent
\textbf{AMS Subject Classification}: 35B09, 35B33, 35J50, 35J57.

\vspace{0.2cm}
\noindent
\textbf{Keywords}: Critical Schr\"{o}dinger system; mixed cooperation and competition; least energy positive solutions; ground states.
\end{abstract}

\section{\bf Introduction}

Consider the following elliptic system with $d\geq 2$ equations:
\begin{equation}\label{S-system}
\begin{cases}
-\Delta u_{i}+\lambda_{i}u_{i}=u_{i}\sum_{j = 1}^{d}\beta_{ij}  u^2_{j}   ~\text{ in } \Omega,\\
u_{i}=0 \text{ on } \partial\Omega,  \quad i=1,...d,
\end{cases}
\end{equation}
where $\Omega=\mathbb{R}^{N}$ and $\lambda_i>0$, or $\Omega\subset \mathbb{R}^{N}$ is a smooth bounded domain and $\lambda_i>-\lambda_1(\Omega)$, $N\leq 4$, $\beta_{ii}>0$ for $i=1,\ldots, d$, $\beta_{ij}=\beta_{ji}\in \mathbb{R}$ for $i\neq j$. Here and in what follows $\lambda_{1}(\Omega)$ denotes the first eigenvalue of $(-\Delta, H_{0}^{1}(\Omega))$. System \eqref{S-system} appears when looking for standing wave solutions $\phi_i(x,t)=e^{\imath \lambda_i t}u_i(x)$ of the corresponding nonlinear Schr\"odinger system
\begin{equation*}
\imath \partial_t \phi_i + \Delta \phi_i + \phi_i \sum_{j=1}^d\beta_{ij}  |\phi_j|^2=0,
\end{equation*}
which has applications in many physical models such
as nonlinear optics (see \cite{Mitchell 1996}) or  Bose-Einstein condensation for multi-species condensates (see \cite{Timmermans 1998}). Physically the $\beta_{ii}$ represent self-interactions within the same component, while the $\beta_{ij}$ $(i\neq j)$ express the strength and the type of interaction between different components $i$ and $j$. When $\beta_{ij}>0$ the interaction is of cooperative type, while $\beta_{ij}<0$ represents competition. The relation $\beta_{ij}=\beta_{ji}$ expresses symmetry in the interaction between different components and provides a variational structure to the problem. Indeed, solutions of \eqref{S-system} correspond to critical points of the energy functional $J:H^1_0(\Omega; \mathbb{R}^d)\to \mathbb{R}$ defined by
\begin{equation*}
J(\mathbf{u}):=\int_{\Omega}\frac{1}{2}\sum_{i=1}^{d}\left(|\nabla u_{i}|^{2}+\lambda_{i}u_{i}^{2}\right)-\frac{1}{4}
\sum_{i,j=1}^{d}\beta_{ij}u^2_{i}u^2_{j}\, dx
\end{equation*}
(where we used the vector notation $\mathbf{u}=(u_{1},\cdots,u_{d})$). In particular, this allows to consider \emph{least energy positive solutions}, which are defined as solutions $\mathbf{u}$ of the system with positive components and achieving the level
\[
\inf\{J(\mathbf{u}):\ J'(\mathbf{u})= 0,\ \mathbf{u}\in H^1_0(\Omega;\R^d),\ u_i> 0\ \forall i\}.
\]
Since the system may admit solutions with trivial components (i.e., $u_i=0$ for some $i$'s), this level may or may not coincide with the \emph{ground state level}:
\begin{equation}\label{eqgroundstatelevel}
\inf\{J(\mathbf{u}):\  J'(\mathbf{u})= 0,\ \mathbf{u}\in H^1_0(\Omega;\R^d), \mathbf{u}\neq \mathbf{0} \}.
\end{equation}
We call a solution \emph{fully nontrivial} when all its components are nontrivial, and \emph{semi-trivial} when some components (but not all) are zero. A solution $\mathbf{u}\neq \mathbf{0}$ is called a \emph{ground state} if it achieves \eqref{eqgroundstatelevel}.

\medbreak

Let us first focus on the subcritical case $N\leq 3$.  Several existence results are available in the literature for the purely cooperative and for the purely competitive cases (respectively $\beta_{ij}>0$ for all $i\neq j$, and $\beta_{ij}<0$ for all $i\neq j$); we refer to the introduction of \cite{Tavares 2016-1} for an overview on the topic and for a complete list of references. In particular, the two equation case ($d=2$) is completely characterized. In this case there is only one interaction parameter, $\beta:=\beta_{12}=\beta_{21}$, and by collecting the results in \cite{Ambrosetti 2007,Zou 2013,Wei 2005-1,Wei 2005,Maia 2006,Mandel 2015,Sirakov 2007} it is known that there exist least energy positive solutions for $\beta\in (-\infty,\underline{\beta})\cup(\overline{\beta},+\infty)$, for some $0<\underline{\beta}\leq \overline{\beta}$; moreover, these solutions are actually ground states for $\beta>\bar \beta$. This holds when $\Omega$ is a bounded domain, or $\Omega=\R^N$ and one works with radially symmetric functions. We remark that there are ranges of parameters for which there are no positive solutions.

For three or more equations ($d\geq 3$) the situation is much richer, since in this case system \eqref{S-system} admits the possible coexistence of cooperation and competition, that is, the existence of pairs $(i_{1}, j_{1})$ and $(i_{2}, j_{2})$ such that $\beta_{i_{1}j_{1}}>0$ and $\beta_{i_{2}j_{2}}<0$. More recently, the existence of least energy positive solutions under simultaneous cooperation and competition has attracted great attention, starting from \cite{Wang 2015,Soave 2015}. In \cite{Soave 2015, Tavares 2016-1}, several existence results are obtained
whenever the $d$ components are divided into groups. These papers are complemented by \cite{BSWang 2016,Wang 2015,Wang 2015-2}, where the $d=3$ component system in a bounded domain is treated, and by \cite{Wang 2019,Wei 2019} in the case $\Omega=\mathbb{R}^N$. On the other hand, the study and classification of ground state solutions is done in \cite{Correia 2016-1,Correia 2016-2}.

\medbreak

All the papers mentioned above deal with the subcritical case. For the critical case $N=4$, when $d=1$ (one equation), system \eqref{S-system} is reduced to the well-known Brezis-Nirenberg problem \cite{Brezis 1983}, where the existence of a positive ground state is shown for $-\lambda_1(\Omega)<\lambda_1<0$. From this perspective, the study of \eqref{S-system} can be seen as a generalization of this classical problem to systems, working with the natural assumption $-\lambda_1(\Omega)<\lambda_i<0$ for every $i$. For the $d=2$  equation case, Chen and Zou \cite{Zou 2012} proved that there exists $0<\beta_{1}<\beta_{2}$ (depending on $\lambda_{i}$ and $\beta_{ii}$) such that \eqref{S-system} has a least energy positive solution if  $\beta_{12}\in (-\infty, \beta_{1}) \cup (\beta_{2}, +\infty)$ (exactly as in the subcritical case). We would also like to point out paper \cite{Zou 2015} where it is shown, for more general powers, that the dimension has a great influence in the existence of least energy positive solutions.

For a system with an arbitrary number of equations in the whole space, Guo, Luo and Zou \cite{Zou 2018} obtained the existence and classification of least energy positive solutions to \eqref{S-system}  under $-\lambda_1(\Omega)<\lambda_{1}=\cdots=\lambda_{d}<0$ in case of a bounded smooth domain of $\R^4$, in the pure cooperative case with some additional technical conditions on the coupling coefficients. In the same paper the case $\lambda_1=\ldots=\lambda_d=0$ and $\Omega=\R^4$ is also treated under the same conditions on the $\beta_{ij}$. The existence and classification of ground states is done in \cite{Yang 2018}.

\medbreak

To conclude the state-of-the art, we would like to mention also a few related problems in the critical case.  In \cite{Tavares 2017}, the first author and A. Pistoia constructed, via a Lyapunov-Schmidt reduction and under appropriate assumptions on the domain $\Omega \subset \mathbb{R}^{4}$, families of positive solutions of \eqref{S-system} in the competitive or weakly cooperative cases with all the components $u_{i}$ blowing up at different points as $\lambda_{i}\rightarrow 0^{-}$. In \cite{Pistoia 2018-2, Wei 2014, Wang 2016}, the authors investigated the existence and multiplicity of fully nontrivial solutions to \eqref{S-system} with $\lambda_{i}=0$ for the critical case in $\mathbb{R}^{N}$. Recently \cite{Pistoia 2018-1,Tavares 2017} are concerned with existence and concentration results for
a Coron-type problem in a bounded domain with one or multiple small holes in the case $\lambda_{i}=0$.
In \cite{Tavares 2019-1}, the first author with D. Cassani and J. Zhang studied the existence of least energy positive solutions in the critical exponential case when $N=2$.

\medbreak

To the best of our knowledge, there are no papers considering \eqref{S-system} with mixed cooperation and competition terms, with $\lambda_{i}\neq 0$ being possibly different and for $\Omega$ a bounded domain.  We are interested in providing conditions on the coefficients that insure the existence of  least energy positive solutions; our main purpose is to extend the results proved in \cite{Tavares 2016-1} (in the subcritical case) to the critical case, thus generalizing to many equations results from \cite{Zou 2012} and improving at the same time results from \cite{Zou 2018}. We refer for the next subsection to the actual statements.

\medbreak

Throughout this text we  always work under the assumptions
\begin{equation}\label{eq:coefficients}
-\lambda_1(\Omega)<\lambda_1,\ldots, \lambda_d<0,\qquad \Omega \text{ a bounded smooth domain of } \mathbb{R}^4,
\end{equation}
where we recall that $\lambda_1(\Omega)$ is the first Dirichlet eigenvalue of the Laplacian, and
\begin{equation}\label{eq:beta_ij}
\beta_{ii}>0 \quad \forall i=1,\ldots, d,\qquad \beta_{ij}=\beta_{ji}\quad \forall i, j=1,\ldots, d,\ i\neq j.
\end{equation} In order to present the main results of this paper, we firstly introduce some notations already used in \cite{Soave 2015, Tavares 2016-1}.
\begin{itemize}
  \item We work with the \emph{coupling matrix} $B:=(\beta_{ij})_{i,j=1,\ldots,d}$.

  \item We endow the space $H^{1}_{0}(\Omega)$ with
  \begin{equation*}
  \langle u,v\rangle_{i}:= \int_{\Omega}\nabla u \cdot\nabla v+\lambda_{i}uv \, dx  \quad \text{ and } \quad \|u\|^{2}_{i}:=\langle u,u\rangle_{i},\qquad \text{ for every } i=1,\ldots, d.
  \end{equation*}
  Observe that these are in fact inner products and norms, respectively, due to assumption \eqref{eq:coefficients}.
  \item Having in mind the idea of organizing the components of a solution to the system into several groups, given an arbitrary $1\leq m \leq d$ we say that a vector $\mathbf{a}=(a_{0},...,a_{m})\in \mathbb{N}^{m+1}$ is an $m$-decomposition of $d$ if
  \begin{equation*}
  0=a_{0}<a_{1}<\cdot \cdot \cdot<a_{m-1}<a_{m}=d.
  \end{equation*}
Given an $m$-decomposition $\mathbf{a}$ of $d$, for $h=1,...,m$ we define
  \begin{align*}
&  I_{h}:=\left\{i\in \{1,...,d\}:a_{h-1}<i\leq a_{h}\right\},
\end{align*}
and
\begin{align*}
&  \mathcal{K}_{1}:=\left\{(i,j)\in I_{h}^{2}  \text{ for some } h=1,...,m, \text{ with } i\neq j\right\},\\
&  \mathcal{K}_{2}:=\left\{(i,j)\in I_{h}\times I_{k} \text{ with } h\neq k\right\}.
  \end{align*}
\end{itemize}
  In this way we  say that $u_i$ and $u_j$ belong to the same group if $(i,j)\in \mathcal{K}_1$ and to a different group if $(i,j)\in \mathcal{K}_2$. As we will see ahead, we will get existence results wherever the interaction between elements of the same group is cooperative, while there is either weak cooperation or competition between elements of different groups.

\medbreak

Next we introduce the main results of this paper. We split them into two subsections: the first one is concerned with existence results for \eqref{S-system} in the case $\Omega$ bounded, the second one with the case $\Omega=\R^4$.

\subsection{ Main results: existence}

Consider the Nehari-type set
\begin{align}\label{Manifold-1}
\mathcal{N}&=
\left\{ \mathbf{u}\in H_{0}^{1}(\Omega;\mathbb{R}^{d}):\sum_{i\in I_{h}}\|u_{i}\|_{i}\neq 0 \text{ and } \sum_{i\in I_{h}}
\partial_{i}J(\mathbf{u})u_{i}=0 \text{ for every } h=1,...,m
\right\}\\
&=\left\{ \mathbf{u}\in H_{0}^{1}(\Omega;\mathbb{R}^{d}): \sum_{i\in I_{h}}\|u_{i}\|_{i}\neq 0 \text{ and }
\sum_{i\in I_{h}}\|u_{i}\|_{i}^2=\sum_{i\in I_{h}}\int_\Omega u_i^2 \sum_{j=1}^d \beta_{ij}u_j^2 \text{ for every } h=1,...,m
\right\},
\end{align}
and the infimum of $J$ on the set $\mathcal{N}$:
\begin{equation}\label{Minimizer-1}
c:=\inf_{\mathbf{u}\in \mathcal{N}}J\left(\mathbf{u}\right).
\end{equation}
The first main result of this paper is the following.

\begin{thm}\label{Theorem-1}
Assume \eqref{eq:coefficients}, \eqref{eq:beta_ij}, let $\mathbf{a}$ be an m-decomposition of $d$ for some $1\leq m\leq d$, and assume that
$\beta_{ij}\geq 0$  $\forall (i,j)\in \mathcal{K}_{1}$.
There exists $\Lambda>0$ such that, if
\begin{equation*}
-\infty<\beta_{ij}< \Lambda \quad\forall (i,j)\in \mathcal{K}_{2},
\end{equation*}
then $c$ is attained by a nonnegative $\mathbf{u}\in \mathcal{N}$. Moreover, any minimizer is a  solution of \eqref{S-system}.
\end{thm}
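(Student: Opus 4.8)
The plan is to minimize $J$ over the Nehari-type set $\mathcal{N}$ via a concentration-compactness / direct-method argument, the key difficulty being that the embedding $H^1_0(\Omega)\hookrightarrow L^4(\Omega)$ is \emph{not} compact in dimension four, so a minimizing sequence need not converge; one must rule out the loss of compactness by a careful comparison of the level $c$ with critical thresholds. First I would establish that $\mathcal{N}$ is nonempty and that $c$ is well-defined, positive, and bounded: nonemptiness and a uniform lower bound $c>0$ follow from the fact that for each group $h$ the constraint $\sum_{i\in I_h}\|u_i\|_i^2=\sum_{i\in I_h}\int_\Omega u_i^2\sum_j\beta_{ij}u_j^2$ forces $\sum_{i\in I_h}\|u_i\|_i^2$ to be bounded below away from zero (using the Sobolev inequality and the smallness $\beta_{ij}<\Lambda$ on $\mathcal{K}_2$ together with $\beta_{ij}\ge 0$ on $\mathcal{K}_1$ to absorb the cross terms); on $\mathcal{N}$ one has the clean expression $J(\mathbf u)=\tfrac14\sum_{i}\|u_i\|_i^2$, so a minimizing sequence $(\mathbf u^n)$ is bounded in $H^1_0(\Omega;\mathbb{R}^d)$.

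Next I would pass to a weak limit $\mathbf u^n\rightharpoonup \mathbf u$ (up to a subsequence), with $u_i^n\to u_i$ strongly in $L^2$ and a.e., and $u_i\ge 0$ after replacing $u_i^n$ by $|u_i^n|$ (legitimate because $J$ and the constraints only see $u_i^2$). Writing $v_i^n:=u_i^n-u_i$, the Brezis--Lieb lemma splits every quartic term $\int u_i^{n\,2}u_j^{n\,2}$ as $\int u_i^2u_j^2+\int v_i^{n\,2}v_j^{n\,2}+o(1)$, and $\|u_i^n\|_i^2=\|u_i\|_i^2+\|\nabla v_i^n\|_2^2+o(1)$ (the $\lambda_i$-part is the compact $L^2$ piece). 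The plan is then to show $\mathbf u\in\mathcal{N}$ and $J(\mathbf u)\le c$, hence $=c$, by excluding the possibility that mass escapes into a standard bubble. The group constraint passes to the limit giving $\sum_{i\in I_h}\|u_i\|_i^2-\sum_{i\in I_h}\int u_i^2\sum_j\beta_{ij}u_j^2=-\big(\sum_{i\in I_h}\|\nabla v_i^n\|_2^2-\sum_{i\in I_h}\int v_i^{n2}\sum_j\beta_{ij}v_j^{n2}\big)+o(1)=:-\gamma_h^n+o(1)$. If all $\gamma_h^n\to 0$ one checks $\sum_{i\in I_h}\|u_i\|_i\ne 0$ for each $h$ (again from the lower bound argument applied in the limit, ruling out $\mathbf u$ having an empty group) and concludes $\mathbf u\in\mathcal{N}$ with $J(\mathbf u)=\tfrac14\sum\|u_i\|_i^2\le\liminf\tfrac14\sum\|u_i^n\|_i^2=c$, so $\mathbf u$ is a minimizer; weak lower semicontinuity then forces $v_i^n\to 0$ in $H^1_0$, i.e. strong convergence.

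The main obstacle is exactly the case where some $\gamma_h^n\not\to 0$, i.e. a nontrivial bubble survives in group $h$. Here I would use the limiting-profile equation satisfied by $(v_i^n)_{i\in I_h}$: after rescaling, the surviving mass solves (in the limit) the critical system on $\mathbb{R}^4$ with $\lambda_i=0$ and coupling restricted to the indices of group $h$ (the $\mathcal{K}_2$ interactions with smallness $\beta_{ij}<\Lambda$ contribute negligibly, being subcritical-type cross terms that vanish in the blow-up rescaling, and in fact one can argue they force single-group concentration). This produces an extra energy contribution bounded below by the least positive energy of that limiting critical $\mathbb{R}^4$-system — a quantity of the form $c_\infty^{(h)}>0$ built from the Aubin--Talenti bubble. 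The choice of $\Lambda$, together with the strict Brezis--Nirenberg-type inequality $c<c+c_\infty^{(h)}$ made quantitative by testing the constrained functional with a truncated Aubin--Talenti bubble localized in $\Omega$ (this is where $\lambda_i<0$ enters, exactly as in \cite{Brezis 1983}: the linear negative term $\lambda_i\|u_i\|_2^2$ lowers $c$ strictly below the pure-critical threshold), yields a contradiction: $c\ge J(\mathbf u)+\sum_{h}(\text{bubble energy})\ge 0+c_\infty^{(h)}$ would give $c\ge c_\infty^{(h)}$, while the test-function computation gives $c<c_\infty^{(h)}$. Hence no bubbling occurs, $\mathbf u\in\mathcal{N}$ is a nonnegative minimizer, and a standard Lagrange-multiplier argument (checking that the $m$ multipliers vanish, using that the Nehari constraint is non-degenerate at the minimizer — here one uses $\beta_{ii}>0$ to see the relevant $m\times m$ matrix of derivatives is invertible) shows $\mathbf u$ solves \eqref{S-system}.
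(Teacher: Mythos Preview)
Your overall skeleton (Brezis--Lieb splitting, rule out bubbling by energy comparison, Lagrange multipliers to conclude) is right, but the heart of the argument --- the energy comparison you invoke to exclude bubbling --- is not the correct one, and as stated the contradiction does not close.

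You write that bubbling in a single group $h$ would give $c\ge J(\mathbf u)+c_\infty^{(h)}\ge c_\infty^{(h)}$, while ``the test-function computation gives $c<c_\infty^{(h)}$''. But the test-function computation (placing truncated Aubin--Talenti bubbles in disjoint balls, one per group) only yields $c<\sum_{h=1}^m l_h-\delta$; it does \emph{not} give $c<l_h$ for a single group, and indeed for $m\ge 2$ there is no reason $c$ should lie below any individual $l_h$. So your contradiction fails precisely in the intermediate cases where some groups have nontrivial weak limits and others bubble off. In that situation the bubble contribution is only $\gtrsim l_h$ for the vanishing groups, while $J(\mathbf u)$ is the energy of a solution of a \emph{sub}-system indexed by the surviving groups $\Gamma\subsetneq\{1,\dots,m\}$; the bound $J(\mathbf u)\ge 0$ is far too weak --- you need $J(\mathbf u)\ge c_\Gamma$.

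This forces the comparison $c<c_\Gamma+\sum_{h\notin\Gamma}l_h-\delta$ for every proper subset $\Gamma$, and that is the estimate the paper actually proves (its Theorem~\ref{Energy Estimation-1,2,m-1}). Crucially, the proof of this comparison uses an \emph{achieved minimizer} for $c_\Gamma$ as part of the test function (glued together with bubbles for the remaining groups, placed near $\partial\Omega$ where the minimizer is small), so one must already know that $c_\Gamma$ is attained. This is why the paper proceeds by induction on the number of groups: the base case $|\Gamma|=1$ is handled directly (Theorem~\ref{System-h}), and the inductive step uses the attained sub-levels both in the energy comparison and to bound $J(\mathbf u)$ from below. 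Your proposal is missing exactly this inductive hierarchy; without it, the mixed ``partial bubble / partial survival'' scenarios cannot be excluded.
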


\begin{remark}\label{2}
We mention that $\Lambda$ is dependent on $\beta_{ij}\geq 0$ for $(i,j)\in \mathcal{K}_{1}$, $\beta_{ii}, \lambda_{i}, i=1,\ldots, d$ (see \eqref{Constant-8} ahead for the explicit expression).
\end{remark}

\begin{remark}\label{2-2}
In \cite{Tavares 2016-1} the authors proved that the above theorem holds true in the subcritical case $N\leq 3$ in a bounded domain $\Omega \subset \mathbb{R}^{N}$, where however the compactness of $H^{1}_{0}(\Omega)\hookrightarrow L^{4}(\Omega)$  plays a key role. This loss of compactness makes  problem \eqref{S-system} very complicated and substantially different, requiring new ideas. In \cite[Theorem 1.2]{Tavares 2016-1} the constant $\Lambda$ is only dependent on $\beta_{ii}, \lambda_{i}$;  affected by the presence of a critical exponent in \eqref{S-system}, the constant $\Lambda$ in our statement is also dependent on $\beta_{ij}$ for $(i,j)\in \mathcal{K}_{1}$.
\end{remark}

Observe that, when we are dealing with one single group ($m=1$, $\mathbf{a}=(0,d)$), the level $c$ reduces to
\begin{equation*}
\widetilde{c}:=\inf_{\mathcal{\widetilde N}}J\left(\mathbf{u}\right),\quad \text{ where }  \quad\mathcal{\widetilde N}:= \Big\{\mathbf{u}\in H_{0}^{1}(\Omega;\mathbb{R}^{d}):\mathbf{u}\neq \mathbf{0} \text{ and }
\langle\nabla J(\mathbf{u}),\mathbf{u}\rangle=0 \Big\}.
\end{equation*}
As a consequence of Theorem \ref{Theorem-1} we can thus obtain the existence of \emph{ground state} solutions (recall the definition in \eqref{eqgroundstatelevel}) for the purely cooperative case, as well as its classification in case $\lambda_1=\ldots=\lambda_d$. Following \cite{Correia 2016-1,Tavares 2016}, define $f: \mathbb{R}^{d}\mapsto \mathbb{R}$ by
\begin{equation}\label{f-define}
f(x_{1}, \cdots, x_{d})=\sum_{i,j=1 }^{d}\beta_{ij}x_{i}^{2}x_{j}^{2},\quad \text{ let } \quad f_{max}:=\max_{|X|=1}f(X),
\end{equation}
and denote by $\mathcal{X}$ the set of elements in the unit sphere of $\R^d$ achieving $f_{max}$.

\begin{cor}\label{Classification}
Assume \eqref{eq:coefficients},\eqref{eq:beta_ij}, and that
\[
\beta_{ij}\geq 0 \quad \text{ for } i,j=1,\ldots, d.
\]
Then $\widetilde{c}$ is achieved by a nonnegative $\mathbf{u}\in \mathcal{\widetilde N}$, which is a ground state solution of \eqref{S-system}. Moreover, when $\lambda:=\lambda_1=\ldots=\lambda_d$, $\mathbf{u}$ is a ground state of \eqref{S-system} if and only if $\mathbf{u}=X_{0}U$, where $X_{0} \in \mathcal{X}$ and $U$ is a positive ground state solution of
\begin{equation}\label{Class-2-4}
-\Delta v +\lambda v =f_{max}v^{3} ~\text{ in } \Omega.
\end{equation}
\end{cor}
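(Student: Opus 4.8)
The plan is to derive Corollary \ref{Classification} from Theorem \ref{Theorem-1} applied to the one-group decomposition $\mathbf{a}=(0,d)$, so that $\mathcal{N}=\mathcal{\widetilde N}$, $c=\widetilde c$, and $\mathcal{K}_2=\emptyset$ (so the smallness constraint on the $\beta_{ij}$ with $(i,j)\in\mathcal{K}_2$ is vacuous). The hypothesis $\beta_{ij}\ge 0$ for all $i,j$ gives $\beta_{ij}\ge 0$ for $(i,j)\in\mathcal{K}_1$, so Theorem \ref{Theorem-1} applies directly and yields a nonnegative $\mathbf{u}\in\mathcal{\widetilde N}$ achieving $\widetilde c$, which is a solution of \eqref{S-system}. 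The first step is then to argue that any such minimizer is a ground state: since $\mathcal{\widetilde N}$ contains all nontrivial critical points of $J$ (multiply the equation by $\mathbf{u}$ and integrate), we have $\widetilde c\le$ the ground state level of \eqref{eqgroundstatelevel}; conversely the minimizer is itself a nontrivial critical point, so it attains the ground state level. This establishes the first assertion.

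For the classification under $\lambda:=\lambda_1=\dots=\lambda_d$, the strategy is to compare the system level $\widetilde c$ with the scalar Brezis–Nirenberg level for \eqref{Class-2-4}. Write $S_\lambda$ for the mountain-pass/Nehari level of the scalar problem $-\Delta v+\lambda v=v^3$, and let $U>0$ be its ground state (which exists by Brezis–Nirenberg since $-\lambda_1(\Omega)<\lambda<0$). First I would show $\widetilde c\le S_\lambda/f_{max}$: given $X_0\in\mathcal{X}$ and a suitable multiple $tU$, the vector $\mathbf{u}=X_0\,(tU)$ lies on $\mathcal{\widetilde N}$ for an appropriate $t$ because $f(X_0)=f_{max}$, and a direct computation using $|X_0|=1$ gives $J(X_0 tU)$ equal to the scalar energy of $tU$ for the equation with nonlinearity $f_{max}v^3$, i.e. $S_\lambda/f_{max}$; choosing $t$ so that $tU$ is on the scalar Nehari manifold of that rescaled equation yields the bound. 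For the reverse inequality, take any minimizer $\mathbf{u}=(u_1,\dots,u_d)\in\mathcal{\widetilde N}$; using $\sum_{i,j}\beta_{ij}u_i^2u_j^2=f(u_1,\dots,u_d)\le f_{max}\big(\sum_i u_i^2\big)^2$ pointwise, together with the Nehari constraint and the expression $J(\mathbf{u})=\tfrac14\sum_i\|u_i\|_i^2$ on $\mathcal{\widetilde N}$, one shows $\widetilde c=J(\mathbf{u})\ge S_\lambda/f_{max}$ by comparing with the scalar problem satisfied by $v:=(\sum_i u_i^2)^{1/2}$ — more precisely, testing the scalar rescaled Nehari functional with $v$, which is legitimate since $\|\nabla v\|_2^2\le\sum_i\|\nabla u_i\|_2^2$ (a consequence of the pointwise inequality $|\nabla v|^2\le\sum_i|\nabla u_i|^2$, valid because $v$ is the Euclidean norm of the gradient-vector, i.e. diamagnetic-type inequality). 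Hence $\widetilde c=S_\lambda/f_{max}$ and equality forces all the intermediate inequalities to be equalities.

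Finally, tracing the equality cases: equality in $f(u_1,\dots,u_d)\le f_{max}(\sum u_i^2)^2$ at every point where $\mathbf{u}\neq 0$ forces the direction $(u_1,\dots,u_d)/|(u_1,\dots,u_d)|$ to lie in $\mathcal{X}$ at each such point, and equality in $|\nabla v|^2\le \sum_i|\nabla u_i|^2$ forces the gradients $\nabla u_i$ to be parallel; combining these with the regularity of solutions (so that the $u_i$ are continuous, and positive inside $\Omega$ after a maximum-principle / strong-maximum-principle argument applied to $v$, since $v$ solves $-\Delta v+\lambda v=f_{max}v^3$ in the weak sense with $v\ge 0$, $v\not\equiv 0$) one concludes that $(u_1,\dots,u_d)=X_0 U$ for a constant $X_0\in\mathcal{X}$ and $U$ a positive scalar function, which must then solve \eqref{Class-2-4} and be a ground state of it (its energy is exactly $S_\lambda/f_{max}$, the scalar ground state level for that equation). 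The converse direction — that $X_0 U$ with $X_0\in\mathcal{X}$ and $U$ a positive ground state of \eqref{Class-2-4} is a ground state of the system — follows from the energy computation already performed in the upper-bound step. The main obstacle I anticipate is making the reduction to the scalar function $v=(\sum_i u_i^2)^{1/2}$ fully rigorous: one must justify $v\in H_0^1(\Omega)$ with $|\nabla v|^2\le\sum_i|\nabla u_i|^2$ a.e., handle the set $\{v=0\}$ carefully in that inequality, and then extract the pointwise equality cases and upgrade them to a global identity $\mathbf{u}=X_0U$ using continuity and the positivity of $U$; once this is in place the rest is bookkeeping with Nehari constraints and energy identities.
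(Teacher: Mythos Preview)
Your proposal is correct and follows the same approach as the paper. For the existence part you apply Theorem \ref{Theorem-1} with the one-group decomposition $\mathbf{a}=(0,d)$ and identify $\widetilde c$ with the ground state level, exactly as the paper does; for the classification part the paper simply cites \cite[Theorem 2.1]{Tavares 2016}, and the argument you sketch (comparing $\widetilde c$ with the scalar level $S_\lambda/f_{max}$ via the test function $v=(\sum_i u_i^2)^{1/2}$, the pointwise inequalities $|\nabla v|^2\le\sum_i|\nabla u_i|^2$ and $f(u_1,\ldots,u_d)\le f_{max}v^4$, and then tracing the equality cases) is precisely the strategy of that cited result.
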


We mention that $\widetilde{c}$ is achieved by Theorem \ref{Theorem-1}. The classification result is shown by applying the strategy of \cite[Theorem 2.1]{Tavares 2016} (see also \cite[Theorem 1]{Correia 2016-1}). Clearly, whenever each component of $X_0$ is nonzero, by the maximum principle and the invariance of $J$ and $ \mathcal{\widetilde N}$ under the transformation $\mathbf{u}\mapsto (|u_1|,\ldots, |u_d|)$, the ground state level coincides with the least energy positive level.

\begin{remark}\label{1}
We point out that in \cite[Theorem 1.2]{Zou 2012}, for the two equations case $d=2$ and $\lambda:=\lambda_1=\lambda_2$, it is proved that least energy positive solutions have the form $(u,v)=(\sqrt{k}\omega, \sqrt{l}\omega)$ for $0<\beta_{12}<\min\{\beta_{11}, \beta_{22}\}$ or $\beta_{12}>\max\{\beta_{11}, \beta_{22}\}$, where $\omega$ is a positive ground state solution of \eqref{Class-2-4} with $f_{max}=1$. Moreover, a pair $(\sqrt{k}\omega,\sqrt{l}\omega)$ is a positive least energy solution if $k,l$ solve a certain linear system (see \cite[Theorem 1.1]{Zou 2012}). For a system with an arbitrary number of equations, $\lambda:=\lambda_1=\ldots=\lambda_d$ and $\beta_{ij}\geq 0$, \cite[Theorem 1.1]{Zou 2018} presented the existence of least energy positive solutions of the form $u_{i}=\sqrt{c_{i}}\omega, i=1,\ldots,d$ for $B$ an invertible matrix such that the sum of each column of $B^{-1}$ is greater than $0$, obtaining also in \cite[Theorem 1.2]{Zou 2018} that all minimizers are of this form if moreover $B$ is positive or negative definite. By a direct computation we deduce that \cite[Theorems 1.1 \& 1.2]{Zou 2012} and \cite[Theorem 1.1 \& 1.2]{Zou 2018} are a special case of Corollary \ref{Classification}.
\end{remark}

Based on Theorem \ref{Theorem-1}, we obtain least energy positive solutions of \eqref{S-system} by following ideas from \cite{Tavares 2016-1}. Firstly, using Theorem \ref{Theorem-1} in the particular situation $m=d$ (so that $\mathbf{a}=(0,1,2,\ldots, d)$ necessarily), we obtain the existence of least energy positive solution of \eqref{S-system} under competition and/or weak cooperation.
\begin{cor}\label{Theorem-1-1}
Assume \eqref{eq:coefficients} and \eqref{eq:beta_ij}.
There exists $\Lambda>0$, depending only on $\beta_{ii}, \lambda_{i} ~(i=1,\ldots, d)$ such that, if
\begin{equation*}
-\infty<\beta_{ij}<\Lambda \quad\forall i\neq j,
\end{equation*}
then \eqref{S-system} has a least energy positive solution.
\end{cor}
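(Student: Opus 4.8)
The plan is to deduce Corollary~\ref{Theorem-1-1} directly from Theorem~\ref{Theorem-1} by specializing to the finest decomposition $m=d$, $\mathbf{a}=(0,1,\ldots,d)$. With this choice each group $I_h=\{h\}$ is a singleton, so the set $\mathcal{K}_1$ is empty, which means the hypothesis ``$\beta_{ij}\ge 0$ for all $(i,j)\in\mathcal{K}_1$'' is vacuously satisfied; in particular the constant $\Lambda$ produced by Theorem~\ref{Theorem-1} (see Remark~\ref{2}) no longer depends on any off-diagonal $\beta_{ij}$ but only on $\beta_{ii}$ and $\lambda_i$, which is exactly the dependence claimed here. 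Moreover $\mathcal{K}_2=\{(i,j):i\ne j\}$, so the smallness condition in Theorem~\ref{Theorem-1} becomes precisely $-\infty<\beta_{ij}<\Lambda$ for all $i\ne j$. Theorem~\ref{Theorem-1} then gives a nonnegative minimizer $\mathbf{u}\in\mathcal{N}$ of $c$ which solves \eqref{S-system}.

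The one point that still requires argument is that this minimizer is actually \emph{fully nontrivial}, i.e.\ has all components positive, since in the $m=d$ case the Nehari-type constraint in \eqref{Manifold-1} only forces $\sum_{i\in I_h}\|u_i\|_i=\|u_h\|_h\ne 0$ for each $h$ — but since each group is a singleton, this already says $u_h\ne 0$ for every $h=1,\ldots,d$. So each component is nontrivial. Combined with nonnegativity, the strong maximum principle applied to the equation $-\Delta u_h+\lambda_h u_h=u_h\sum_j\beta_{hj}u_j^2$ (rewritten as $-\Delta u_h + (\lambda_h - \sum_j\beta_{hj}u_j^2)u_h=0$ with the zeroth-order coefficient in $L^\infty$ by elliptic regularity and Sobolev embedding) upgrades $u_h\ge 0,\ u_h\not\equiv 0$ to $u_h>0$ in $\Omega$. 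Hence $\mathbf{u}$ is a positive solution.

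Finally I would check that $c$ equals the least energy positive level. Any positive solution $\mathbf{v}$ of \eqref{S-system} satisfies $\partial_i J(\mathbf{v})v_i=0$ for each $i$, hence $\sum_{i\in I_h}\partial_i J(\mathbf{v})v_i=0$ trivially in the singleton case, so $\mathbf{v}\in\mathcal{N}$ and therefore $J(\mathbf{v})\ge c$. Conversely, the positive solution $\mathbf{u}$ constructed above lies in the set over which the least energy positive level is taken and attains $J(\mathbf{u})=c$. Thus $c$ is the least energy positive level and $\mathbf{u}$ realizes it, which is the assertion of the corollary.

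There is essentially no substantive obstacle here: the corollary is a clean specialization of Theorem~\ref{Theorem-1}, and the only genuine content beyond quoting that theorem is (i)~observing that singleton groups make the Nehari constraint coincide with ``every component nontrivial'', so semi-triviality is automatically excluded, and (ii)~the routine maximum-principle passage from nonnegative-nontrivial to strictly positive. The more delicate item is bookkeeping the dependence of $\Lambda$: one must point explicitly to the expression \eqref{Constant-8} and note that with $\mathcal{K}_1=\varnothing$ it reduces to a quantity depending only on $\beta_{ii},\lambda_i$, as stated.
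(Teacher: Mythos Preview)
Your proof is correct and follows exactly the paper's approach: the paper simply states that Corollary~\ref{Theorem-1-1} is a direct consequence of Theorem~\ref{Theorem-1} in the case $m=d$, $\mathbf{a}=(0,1,\ldots,d)$. You have merely spelled out the details (vacuity of the $\mathcal{K}_1$ condition, singleton groups forcing full nontriviality, the maximum principle, and the identification of $c$ with the least energy positive level) that the paper leaves implicit.
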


The following two results allow strong cooperation between elements which belong to the same group. They correspond to Theorem 1.4 and Theorem 1.5 in \cite{Tavares 2016-1} (which dealt with the subcritical case $N\geq 3$)

\begin{thm}\label{Theorem-2}
Assume \eqref{eq:coefficients}, \eqref{eq:beta_ij}, and let $\mathbf{a}$ be an m-decomposition of $d$ for some $1\leq m\leq d$. Let $\Lambda$ be the constant defined in Theorem \ref{Theorem-1}. If
\begin{enumerate}
  \item  $\lambda_{i}=\lambda_{h}$ for every $i\in I_{h}, h=1,\ldots, m$;
  \item $\beta_{ij}=\beta_{h}>\max\{\beta_{ii}: i\in I_{h}\}$ for every $(i,j)\in I_{h}^{2}$ with $i\neq j, h=1,\ldots, m$.
  \item $\beta_{ij}=b<\Lambda$ for every $(i,j)\in \mathcal{K}_{2}$;
\end{enumerate}
then system \eqref{S-system} has a least energy positive solution.
\end{thm}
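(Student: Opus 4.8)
The plan is to deduce Theorem~\ref{Theorem-2} from Theorem~\ref{Theorem-1} by reducing the many-component system to a reduced $m$-component system via the ansatz that, within each group $I_h$, the components are all proportional to a single profile. Precisely, under hypotheses (1)--(2), inside each group the interaction is purely cooperative with $\beta_{ij}=\beta_h$ for $i\ne j$ and common $\lambda$-parameter $\lambda_h$, so one looks for solutions of the form $u_i = \sqrt{t_i}\, w_h$ for $i\in I_h$, where $w_h\in H_0^1(\Omega)$ and $(t_i)_{i\in I_h}$ is a fixed positive vector to be chosen (the natural choice being the maximizer of $f$ restricted to the block, as in Corollary~\ref{Classification}). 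Substituting this ansatz collapses the $h$-th block of equations into a single Brezis--Nirenberg-type equation $-\Delta w_h + \lambda_h w_h = \Big(\sum_{i\in I_h}\beta_{h,\text{eff}} t_i\Big) w_h^3 + (\text{cross terms from }\mathcal{K}_2)$, so that the full system becomes equivalent to an $m$-component system in the unknowns $(w_1,\ldots,w_m)$ with a new coupling matrix $\tilde B=(\tilde\beta_{hk})$, where the diagonal entries $\tilde\beta_{hh}>0$ encode the (beneficial, since $\beta_h$ is large) intra-group cooperation and the off-diagonal entries $\tilde\beta_{hk}$ are multiples of $b$.

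First I would make the ansatz precise and compute $\tilde B$: the key algebraic point is that, because $\beta_h > \max\{\beta_{ii}: i\in I_h\}$, the block matrix $(\beta_{ij})_{i,j\in I_h}$ is such that the relevant maximization problem $\max_{|X|=1, X\geq 0} f_h(X)$ over the block is attained at an interior point $X_h$ with all coordinates strictly positive, giving $f_{h,max} =: \tilde\beta_{hh} > 0$; this is exactly the computation underlying Corollary~\ref{Classification} applied group-by-group, and it is what makes the reduced diagonal coefficient well-defined and positive. The off-diagonal reduced coefficients are $\tilde\beta_{hk} = b\sum_{i\in I_h}\sum_{j\in I_k}\sqrt{t_i}\sqrt{t_j}$ (a positive multiple of $b$), hence $\tilde\beta_{hk} < \tilde\Lambda$ as soon as $b$ is small enough, where $\tilde\Lambda$ is the threshold from Theorem~\ref{Theorem-1} applied to the reduced $m$-component system with coupling matrix $\tilde B$ and parameters $\lambda_1,\ldots,\lambda_m$. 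Then apply Theorem~\ref{Theorem-1} to the reduced system (all intra-group interactions are trivially cooperative since $m=m$, i.e.\ each reduced component is its own group): one gets a nonnegative solution $(w_1,\ldots,w_m)$ of the reduced system with all $w_h\not\equiv 0$, achieving the reduced Nehari infimum; by the maximum principle each $w_h>0$ in $\Omega$, and unwinding the ansatz produces a fully positive solution $\mathbf u$ of \eqref{S-system}.

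The step that needs genuine care—and which I expect to be the main obstacle—is showing that the solution produced this way is actually a \emph{least energy} positive solution of the full system, not merely a positive critical point obtained by restricting to the proportional ansatz. For this one must compare the full energy level
\[
c_{\mathrm{lep}} := \inf\{J(\mathbf u): J'(\mathbf u)=0,\ u_i>0\ \forall i\}
\]
with the reduced level $\tilde c$. The inequality $c_{\mathrm{lep}} \leq \tilde c$ is clear since the ansatz gives admissible competitors. For the reverse inequality the argument is: given any positive solution $\mathbf u$ of \eqref{S-system}, group-wise it is a positive solution of a cooperative subsystem, and by the classification/convexity argument behind Corollary~\ref{Classification} (comparing $\sum_{i\in I_h} u_i^2\sum_j\beta_{ij}u_j^2$ with $f_{h,max}(\sum_{i\in I_h}u_i^2)^2$ pointwise, using that the maximizer of $f_h$ on the sphere has all positive entries when $\beta_h$ dominates), one shows $J(\mathbf u) \geq$ the reduced energy of a suitable projected competitor on the reduced Nehari manifold, hence $J(\mathbf u)\geq \tilde c$. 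The technical heart is this pointwise/convexity comparison together with checking that the projection onto the reduced Nehari set does not decrease energy—this is where the hypothesis $\beta_h > \max_{i\in I_h}\beta_{ii}$ is essential and where, in the critical case, one must be careful that no compactness is lost (but here it is not needed, since the reduction is purely algebraic and the existence for the reduced system is already guaranteed by Theorem~\ref{Theorem-1}). Finally I would record that $b$ small can be taken uniformly, so the statement holds for all $b<\Lambda$ with $\Lambda$ as in Theorem~\ref{Theorem-1} (after possibly shrinking), matching the subcritical analogue \cite[Theorem~1.4]{Tavares 2016-1}.
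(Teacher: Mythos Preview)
Your reduction-to-$m$-equations strategy is different from the paper's route and, as written, does not quite deliver the stated theorem.

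The paper's approach (following \cite[Theorem~1.4]{Tavares 2016-1} verbatim) is more direct: apply Theorem~\ref{Theorem-1} to the \emph{original} system with the given $m$-decomposition, obtaining a nonnegative minimizer $\mathbf{u}\in\mathcal{N}$ for $c$ which is a solution with $\mathbf{u}_h\neq 0$ for every $h$. One then shows, by a redistribution argument \emph{within each group}, that every single component $u_i$ is nontrivial. Concretely, if $u_{i_0}\equiv 0$ for some $i_0\in I_h$ while $u_{j_0}>0$ for some $j_0\in I_h$, consider a competitor obtained by rotating mass from $u_{j_0}$ to $u_{i_0}$. Because $\lambda_{i_0}=\lambda_{j_0}$, because $\beta_{i_0 k}=\beta_{j_0 k}$ for every $k\neq i_0,j_0$ (equal to $\beta_h$ if $k\in I_h$, equal to $b$ if $k\notin I_h$), and because $\beta_h>\max(\beta_{i_0 i_0},\beta_{j_0 j_0})$, one checks that the competitor stays on $\mathcal{N}$ with strictly smaller energy, contradicting minimality of $c$. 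Once all $u_i>0$, every positive solution lies in $\mathcal{N}$, so $c$ equals the least energy positive level. This argument is purely algebraic once the minimizer exists, so the threshold $\Lambda$ from Theorem~\ref{Theorem-1} carries over \emph{unchanged}---exactly as the statement asserts.

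Your approach has two genuine gaps. First, applying Corollary~\ref{Theorem-1-1} to the reduced $m$-equation system requires the reduced off-diagonal coefficients to lie below the threshold $\tilde\Lambda$ associated to the \emph{reduced} data $(\tilde\beta_{hh},\lambda_h)$; there is no reason this is the same as $\Lambda$, and your ``after possibly shrinking'' concession means you would only prove a weaker statement. (Incidentally, with $u_i=\sqrt{t_i}\,w_h$ the reduced cross coefficient is $b\big(\sum_{i\in I_h}t_i\big)\big(\sum_{j\in I_k}t_j\big)$, not $b\sum\sqrt{t_i}\sum\sqrt{t_j}$.) Second, to conclude that the ansatz solution is \emph{least energy} among all positive solutions you need $J(\mathbf{u})\geq\tilde c$ for an arbitrary positive solution $\mathbf{u}$; your sketched replacement $w_h=(\sum_{i\in I_h}u_i^2)^{1/2}$ together with $\sum_{i,j\in I_h}\beta_{ij}u_i^2u_j^2\leq f_{h,\max}w_h^4$ points in the right direction, but after this replacement the vector is generally not on the reduced Nehari set, and you have not verified that projecting back does not destroy the inequality. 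The paper's redistribution argument bypasses both issues by never leaving the original Nehari manifold $\mathcal{N}$.
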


\begin{thm}\label{Theorem-3}
Assume \eqref{eq:coefficients}, \eqref{eq:beta_ij}, and let $\mathbf{a}$ be an m-decomposition of $d$ for some $1\leq m\leq d$. Let $\Lambda$ be the constant defined in Theorem \ref{Theorem-1} and fix $\alpha>1$. If
\begin{enumerate}
  \item $\lambda_{i}=\lambda_{h}$ for every $i\in I_{h}, h=1,\ldots, m$;
  \item $\beta_{ij}=\beta_{h}>\frac{\alpha}{\alpha-1}\max_{i\in I_h}\{\beta_{ii}\}$ for every $(i,j)\in I_{h}^{2}$ with $i\neq j, h=1,\ldots, m$;
  \item    $ |\beta_{ij}|\leq \frac{\Lambda}{\alpha d^{2}}$ for every $(i,j)\in \mathcal{K}_{2}$;
\end{enumerate}
then system \eqref{S-system} has a least energy positive solution.
\end{thm}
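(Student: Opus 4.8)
The plan is to deduce Theorem \ref{Theorem-3} from Theorem \ref{Theorem-1} by the same reduction strategy used in \cite{Tavares 2016-1}, namely to show that under hypotheses (1)--(3) the unconstrained minimum $c$ over $\mathcal{N}$ is in fact attained by a \emph{fully nontrivial} nonnegative vector, which then by the maximum principle has strictly positive components and is a least energy positive solution. By Theorem \ref{Theorem-1} (which applies since (3) forces $\beta_{ij}<\Lambda$ on $\mathcal{K}_2$, and $\beta_{ij}=\beta_h>0$ on $\mathcal{K}_1$), there exists a nonnegative minimizer $\mathbf{u}\in\mathcal{N}$ solving the system; the only thing to rule out is that some component $u_i$ vanishes. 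Since within each group $I_h$ the system is purely cooperative with equal $\lambda_i$, a standard argument shows that if one component of group $h$ is nontrivial then all are (otherwise a nontrivial component could ``feed'' a zero one, strictly lowering the energy); so the real issue is to exclude that an \emph{entire group} disappears.

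First I would set up, for each nonempty subset $S\subseteq\{1,\ldots,m\}$ of groups, the sub-system energy level $c_S$ obtained by minimizing $J$ over the Nehari set $\mathcal{N}_S$ of vectors supported on $\bigcup_{h\in S}I_h$; by Theorem \ref{Theorem-1} applied to the sub-decomposition, each $c_S$ is attained. A semi-trivial minimizer of $c=c_{\{1,\ldots,m\}}$ with group-support $S\subsetneq\{1,\ldots,m\}$ would give $c=c_S$, so it suffices to prove the strict inequality $c<c_S$ for every proper nonempty $S$; by induction on $m$ it is in fact enough to handle the case where exactly one group, say $I_m$, is added to $S=\{1,\ldots,m-1\}$, i.e. to show $c<c_{\{1,\ldots,m-1\}}+\widetilde c_{m}$, where $\widetilde c_m$ is the ground-state level of the single group $I_m$ (a Brezis--Nirenberg-type level for the scalar equation with nonlinearity $f_{max}^{(m)}v^3$, using Corollary \ref{Classification} inside the group). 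This is exactly the ``new estimates comparing energy levels of the system with those of appropriate sub-systems'' advertised in the abstract.

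The heart of the matter, then, is the strict energy inequality, which I would obtain by an explicit competitor construction: take a minimizer $\mathbf{v}$ for the $(m-1)$-group sub-system and a minimizer $\mathbf{w}$ for the $m$-th group, glue them into $(\mathbf{v},t\,\mathbf{w})$ or a suitable two-parameter rescaling $(s\mathbf{v},t\mathbf{w})$ projected onto $\mathcal{N}$, and expand $J$ in the small parameter governing the cross-interaction. The point of hypotheses (2) and (3) — with the quantitative gap $\beta_h>\frac{\alpha}{\alpha-1}\max_{i\in I_h}\beta_{ii}$ and $|\beta_{ij}|\le \Lambda/(\alpha d^2)$ for $\alpha>1$ — is precisely to make the within-group cooperation strong enough (so $\widetilde c_m$ is pushed down, since $f_{max}^{(m)}$ grows like $\beta_h$) while keeping the between-group coupling small enough (bounded in terms of $\Lambda/(\alpha d^2)$) that the cross terms $\int u_i^2 u_j^2$, $(i,j)\in\mathcal{K}_2$, contribute a net \emph{decrease} to the energy when the two blocks are superimposed, or at worst a negligible increase dominated by the gain from strong intra-group cooperation. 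Concretely one shows $\widetilde c_m \le \frac{\alpha-1}{\alpha}\cdot(\text{scalar Brezis--Nirenberg level})$ type bound from (2), and the mixed terms perturb $c_{\{1,\ldots,m-1\}}+\widetilde c_m$ by at most $O(\Lambda/(\alpha d^2))$-controlled quantities from (3); the arithmetic of $\alpha>1$ is arranged so the first effect beats the second, yielding the strict inequality.

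The main obstacle I anticipate is the critical-exponent bookkeeping in this gluing step. In the subcritical setting of \cite{Tavares 2016-1} one may simply take fixed profiles and rescale, since all relevant integrals are continuous and compactness holds; here, because $H^1_0(\Omega)\hookrightarrow L^4(\Omega)$ is not compact, one must be careful that the comparison sub-system levels $c_S$ and $\widetilde c_m$ are themselves below the relevant threshold (a multiple of the Sobolev constant $S^2/4$) so that minimizers exist — this is where $-\lambda_1(\Omega)<\lambda_i<0$ and the Brezis--Nirenberg mechanism enter, and it is the same difficulty flagged in Remark \ref{2-2}. One must check that adding a group does not push the energy above threshold, i.e. that the competitor $(s\mathbf{v},t\mathbf{w})$ stays in the ``good'' regime; the interplay between the constant $\Lambda$ (which by Remark \ref{2} already depends on the intra-group $\beta_{ij}$, hence on $\beta_h$) and the size constraint $|\beta_{ij}|\le\Lambda/(\alpha d^2)$ has to be shown to be self-consistent. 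Once the strict inequalities $c<c_S$ are in hand for all proper $S$, the conclusion is immediate: the Theorem \ref{Theorem-1} minimizer cannot be semi-trivial, all components are nonnegative and nontrivial, hence (elliptic regularity and the strong maximum principle) strictly positive, giving a least energy positive solution.
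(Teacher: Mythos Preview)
Your proposal contains a fundamental misidentification of the difficulty. You write that ``the real issue is to exclude that an entire group disappears,'' and then devote the bulk of the proposal to establishing strict inequalities $c<c_S$ for proper subsets $S\subsetneq\{1,\ldots,m\}$. But this is a non-issue: by the very definition of $\mathcal{N}$ (see \eqref{Manifold-1}), every element satisfies $\sum_{i\in I_h}\|u_i\|_i\neq 0$ for \emph{each} $h=1,\ldots,m$, so the minimizer from Theorem~\ref{Theorem-1} automatically has at least one nontrivial component in every group. A ``semi-trivial minimizer with group-support $S\subsetneq\{1,\ldots,m\}$'' simply cannot lie in $\mathcal{N}$. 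Your comparison $c<c_S+\widetilde c_m$ is essentially the content of Theorem~\ref{Energy Estimation-1,2,m-1}, already used in the proof of Theorem~\ref{Theorem-1} itself, and is irrelevant to the passage from Theorem~\ref{Theorem-1} to Theorem~\ref{Theorem-3}.

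The actual task, which you dismiss in one line as ``a standard argument,'' is to show that \emph{within} each group $I_h$ no single component vanishes. The ``feeding'' idea you mention is indeed the right one, but it is not standard and it is precisely where the quantitative hypotheses (2) and (3) enter. Concretely (following \cite[Theorem~1.5]{Tavares 2016-1}): if $u_{i_0}\equiv 0$ for some $i_0\in I_h$, pick $j_0\in I_h$ with $u_{j_0}\not\equiv 0$ and build a competitor by transferring mass (for instance $\widehat u_{i_0}=\widehat u_{j_0}=u_{j_0}/\sqrt{2}$, keeping the other components fixed). Condition~(2), $\beta_h>\frac{\alpha}{\alpha-1}\max_{i\in I_h}\beta_{ii}$, guarantees that the $h$-th diagonal entry of $M_B$ strictly increases under this redistribution, which drives the energy down by a definite amount. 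However, the competitor need not lie on $\mathcal{N}$: the Nehari constraints of the \emph{other} groups are perturbed through the $\mathcal{K}_2$-terms, and one must re-project. Condition~(3), $|\beta_{ij}|\le \Lambda/(\alpha d^2)$, is exactly what bounds this re-projection cost so that it does not cancel the gain from (2); the factor $\alpha$ and the $d^2$ in the denominator arise from summing these cross terms over all pairs. Your description of (2) as ``pushing $\widetilde c_m$ down'' and of (3) as making cross terms ``contribute a net decrease'' misreads their role entirely.
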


In the next subsection, we describe our main results on the case $\Omega=\R^4$ for $\lambda_1=\ldots=\lambda_k=0$.

%%%%
%
%%%

\subsection{ Main results: the limiting system case}

In \cite{Zou 2012}, for the two equation case ($d=2$), in order to prove the existence of fully nontrivial solutions an important role is played by the limiting equation
\begin{equation}\label{eq:single_critical}
-\Delta u=u^3 \qquad \text{ in } \R^4,
\end{equation}
whose positive solutions in $\mathcal{D}^{1,2}(\R^4)$ are given by
\begin{equation}\label{Class-2}
U_{\epsilon,y}(x):=\frac{2\sqrt{2}\epsilon}{\epsilon^{2}+|x-y|^{2}},\qquad \epsilon>0,\ y\in \R^4.
\end{equation}
In our situation (arbitrary number of equations and mixed cooperation/competition parameters), the role of  \eqref{eq:single_critical} is replaced by the role of the following sub-systems
\begin{equation}\label{sub-system}
\begin{cases}
-\Delta v_{i}=\sum_{j \in I_{h}}\beta_{ij}v_{j}^{2}v_{i}  ~\text{ in } \mathbb{R}^{4},\\
v_{i}\in \mathcal{D}^{1,2}(\mathbb{R}^{4})  \quad \forall i\in I_{h}.
\end{cases}
\end{equation}
Existence and classification results for ground states of this system were shown in \cite{Yang 2018}. Here we complement such result by presenting new characterizations of the ground state level in the purely cooperative case, which will be of key importance in the nonexistence result we present at the end of the introduction.

\subsubsection{Existence, classification and characterization of ground state solutions for sub-systems}

Set $\mathbb{D}_h:=(\mathcal{D}^{1,2}(\mathbb{R}^{4}))^{a_{h}-a_{h-1}}$ with the norm $\|\mathbf{u}\|_{\mathbb{D}_h}:=\left(\sum_{i\in I_{h}}\int_{\mathbb{R}^{4}}|\nabla u_{i}|^{2}\right)^{\frac{1}{2}}$. Take the energy functional
\begin{equation*}
E_{h}(\mathbf{v}):=\int_{\mathbb{R}^{4}}\frac{1}{2}\sum_{i\in I_{h}}|\nabla v_{i}|^{2}-\frac{1}{4}\sum_{(i,j)\in I_{h}^{2}}
\beta_{ij}v_{j}^{2}v_{i}^{2}\, dx,
\end{equation*}
and well as the level
\begin{equation}\label{eq:Ground State-2}
l_{h}:= \inf_{\mathcal{M}_{h}}E_{h},\quad \text{ with } \quad  \mathcal{M}_{h}:=\Big\{\mathbf{v}\in \mathbb{D}_h:\mathbf{v}\neq \mathbf{0} \text{ and }
\langle\nabla E_{h}(\mathbf{v}),\mathbf{v}\rangle=0 \Big\}.
\end{equation}
Assume $\beta_{ij}\geq 0$ for every $(i,j)\in I_h^2$ with $\beta_{ii}>0$. It is standard to prove that $l_{h}>0$, and that
\begin{align}\label{Vector Sobolev Inequality-1}
 l_{h}  = \inf_{\mathbf{v}\in\mathbb{D}_h\backslash \{\mathbf{0}\}}\max_{t>0}E_{h}(t\mathbf{v})
    = \inf_{\mathbf{v}\in\mathbb{D}_h\backslash \{\mathbf{0}\}}\frac{1}{4}\frac{\left(\int_{\mathbb{R}^{4}}\sum_{i\in I_{h}}|\nabla v_{i}|^{2}\right)^{2}}{\int_{\mathbb{R}^{4}}\sum_{(i,j)\in I_{h}^{2}}
\beta_{ij}v_{j}^{2}v_{i}^{2}}.
\end{align}
Therefore we get the following vector Sobolev inequality
\begin{equation}\label{Vector Sobolev Inequality}
4l_{h}\int_{\mathbb{R}^{4}}\sum_{(i,j)\in I_{h}^{2}}\beta_{ij}v_{j}^{2}v_{i}^{2}\leq \left(\int_{\mathbb{R}^{4}}\sum_{i\in I_{h}}|\nabla v_{i}|^{2}\right)^{2},~\forall \mathbf{v}\in \mathbb{D}_h,
\end{equation}
which will play an important role in the study of the system \eqref{S-system}. In order to state an alternative characterization of ground states we also introduce, for $h=1,...,m$,
\begin{equation*}
\widetilde{E}_{h}(\mathbf{v}):=\frac{1}{4}\int_{\mathbb{R}^{4}}\sum_{i\in I_{h}}|\nabla v_{i}|^{2}=\frac{1}{4}\|\mathbf{v}\|_{\mathbb{D}_h}^{2},
\end{equation*}
and
\begin{equation}\label{eq:tilde_l_h}
\widetilde{l}_{h}:= \inf_{\widetilde{\mathcal{M}}_{h}}\widetilde{E}_{h}, \quad \text{ with } \quad \widetilde{\mathcal{M}}_{h}:=\Big\{\mathbf{v}:\mathbf{v}\neq \mathbf{0} \text{ and } \|\mathbf{v}\|_{\mathbb{D}_h}^{2}\leq \int_{\mathbb{R}^{4}}\sum_{(i,j)\in I_{h}^{2}}
\beta_{ij}v_{j}^{2}v_{i}^{2}
 \Big\}.
\end{equation}
Clearly, $\widetilde l_h \leq l_h$.

Finally, consider $f_h: \mathbb{R}^{|I_{h}|}\mapsto \mathbb{R}$ defined by
\begin{equation}\label{f-define}
f_h(x_{1}, \cdots, x_{|I_{h}|})=\sum_{i,j=1 }^{|I_{h}|}\beta_{ij}x_{i}^{2}x_{j}^{2},
\end{equation}
and denote by $\mathcal{X}_h$ the set of solutions to the maximization problem
\begin{equation}\label{Class-1}
f_h(X_{0})=f^h_{max}:=\max_{|X|=1}f_h(X),\quad |X_{0}|=1.
\end{equation}
\begin{thm}\label{limit system-6-1}
Assume  \eqref{eq:coefficients} and that $\beta_{ij}\geq 0$  $\forall (i,j)\in I_{h}^{2}$, $\beta_{ii}>0$. Then $l_{h}=\widetilde l_h$, and any minimizer for $\widetilde{l}_{h}$ is a minimizer for $l_{h}$. This level is  attained by a nonnegative $\mathbf{V}_{h}$, a solution of \eqref{sub-system}. Moreover, any of such minimizers has the form $\mathbf{V}_{h}=X_{0}(f^h_{max})^{-\frac{1}{2}}U_{\epsilon,y}$, where $X_{0} \in \mathcal{X}_h$, $y\in \R^4$, $\epsilon>0$.

\end{thm}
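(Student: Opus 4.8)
The plan is to reduce the vector problem to the \emph{scalar} Sobolev inequality on $\mathcal{D}^{1,2}(\mathbb{R}^4)$, by combining a diamagnetic‑type inequality with the elementary pointwise estimate $f_h(X)\le f^h_{max}|X|^4$ (valid by $4$‑homogeneity of $f_h$; note $\mathcal X_h\neq\emptyset$ and $f^h_{max}\ge\beta_{ii}>0$ by compactness of the sphere). Write $N_h(\mathbf v):=\int_{\mathbb R^4}\sum_{(i,j)\in I_h^2}\beta_{ij}v_i^2v_j^2$, and let $S$ be the best constant in $S\|u\|_{L^4(\mathbb R^4)}^2\le\|\nabla u\|_{L^2(\mathbb R^4)}^2$, recalling it is attained exactly on $\{cU_{\epsilon,y}:c>0,\ \epsilon>0,\ y\in\mathbb R^4\}$ and that $\int_{\mathbb R^4}|\nabla U_{\epsilon,y}|^2=\int_{\mathbb R^4}U_{\epsilon,y}^4=S^2$. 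First, for the identity $l_h=\widetilde l_h$: if $\mathbf v\in\mathcal M_h$ then $E_h(\mathbf v)=\tfrac14\|\mathbf v\|_{\mathbb D_h}^2=\widetilde E_h(\mathbf v)$ and $\mathcal M_h\subset\widetilde{\mathcal M}_h$, so $l_h\ge\widetilde l_h$; conversely, given $\mathbf v\in\widetilde{\mathcal M}_h$ (so $\mathbf v\neq\mathbf 0$, hence $N_h(\mathbf v)>0$ since $\beta_{ij}\ge0$), the choice $t^2:=\|\mathbf v\|_{\mathbb D_h}^2/N_h(\mathbf v)\in(0,1]$ gives $t\mathbf v\in\mathcal M_h$ with $\widetilde E_h(t\mathbf v)=\tfrac14t^2\|\mathbf v\|_{\mathbb D_h}^2\le\widetilde E_h(\mathbf v)$, whence $l_h\le\widetilde l_h$; moreover equality $\widetilde E_h(t\mathbf v)=\widetilde E_h(\mathbf v)$ forces $t=1$, i.e.\ $\mathbf v\in\mathcal M_h$, so any minimizer of $\widetilde l_h$ lies in $\mathcal M_h$ and realizes $l_h$.

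Next I would compute the value and prove attainment. For $\mathbf v\in\mathbb D_h$ set $\rho:=(\sum_{i\in I_h}v_i^2)^{1/2}$; from $\rho\nabla\rho=\sum_{i\in I_h}v_i\nabla v_i$ and Cauchy--Schwarz applied componentwise in $\mathbb R^4$ one gets $|\nabla\rho|^2\le\sum_{i\in I_h}|\nabla v_i|^2$ a.e., and together with $\sum_{(i,j)\in I_h^2}\beta_{ij}v_i^2v_j^2=f_h(v_1,\dots,v_{|I_h|})\le f^h_{max}\rho^4$ pointwise and the scalar Sobolev inequality,
\[
N_h(\mathbf v)\ \le\ f^h_{max}\!\int_{\mathbb R^4}\!\rho^4\ \le\ \frac{f^h_{max}}{S^2}\Big(\!\int_{\mathbb R^4}\!|\nabla\rho|^2\Big)^{2}\ \le\ \frac{f^h_{max}}{S^2}\,\|\mathbf v\|_{\mathbb D_h}^4 .
\]
By \eqref{Vector Sobolev Inequality-1} this gives $l_h\ge S^2/(4f^h_{max})$. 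For the reverse inequality, choose $X_0\in\mathcal X_h$ with nonnegative entries (possible since $f_h$ depends only on the $X_i^2$) and put $\mathbf V_h:=(f^h_{max})^{-1/2}X_0\,U_{\epsilon,y}$; a direct computation using $-\Delta U_{\epsilon,y}=U_{\epsilon,y}^3$ and $\int|\nabla U_{\epsilon,y}|^2=\int U_{\epsilon,y}^4=S^2$ shows $\mathbf V_h\in\mathcal M_h$ with $E_h(\mathbf V_h)=S^2/(4f^h_{max})$. Hence $l_h=\widetilde l_h=S^2/(4f^h_{max})$ is attained by this nonnegative $\mathbf V_h$. Finally, since $X_0$ maximizes $f_h$ on the unit sphere, the Lagrange condition (using $\beta_{ij}=\beta_{ji}$) reads $(X_0)_i\big(\sum_{j\in I_h}\beta_{ij}(X_0)_j^2-f^h_{max}\big)=0$ for every $i\in I_h$, which is exactly the identity that makes $\mathbf V_h$ solve \eqref{sub-system}. (Equivalently, any minimizer on the natural constraint $\mathcal M_h$ is a free critical point of $E_h$, since $\langle\nabla E_h(\mathbf v),\mathbf v\rangle=0$ and $\tfrac{d}{dt}\big|_{t=1}\langle\nabla E_h(t\mathbf v),t\mathbf v\rangle=-2\|\mathbf v\|_{\mathbb D_h}^2\neq0$ on $\mathcal M_h$.)

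For the classification of \emph{all} minimizers I would let $\mathbf v$ realize $l_h$ (equivalently $\widetilde l_h$) and squeeze out the three equality cases in the displayed chain. The equality $\int_{\mathbb R^4}(f^h_{max}\rho^4-\sum_{(i,j)\in I_h^2}\beta_{ij}v_i^2v_j^2)=0$ with nonnegative integrand forces $f_h(\mathbf v(x))=f^h_{max}\rho(x)^4$ a.e.; the equality in the Sobolev step forces $\rho$ to be a Sobolev extremal, hence $\rho=c_0U_{\epsilon,y}$ for some $c_0>0,\ \epsilon>0,\ y\in\mathbb R^4$, in particular $\rho>0$ everywhere; and the equality $|\nabla\rho|=(\sum_{i\in I_h}|\nabla v_i|^2)^{1/2}$ a.e., via the equality case of the componentwise Cauchy--Schwarz, gives $\nabla v_i=v_i\,\nabla(\log\rho)$ a.e., hence $\nabla(v_i/\rho)=0$ a.e. Since $\rho$ is smooth and bounded away from $0$ on compacts, $v_i/\rho\in W^{1,2}_{\mathrm{loc}}(\mathbb R^4)$, so $v_i/\rho\equiv(X_0)_i$ is constant with $|X_0|=1$; feeding this into the first equality gives $f_h(X_0)=f^h_{max}$, i.e.\ $X_0\in\mathcal X_h$, and the Nehari constraint $\|\mathbf v\|_{\mathbb D_h}^2=N_h(\mathbf v)$ pins down $c_0=(f^h_{max})^{-1/2}$, so $\mathbf v=(f^h_{max})^{-1/2}X_0\,U_{\epsilon,y}$.

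The delicate point is the last step, the rigidity: extracting the pointwise identity $\nabla(v_i/\rho)=0$ from the integral equality in the diamagnetic inequality, and using the classification of Sobolev extremals (including nontrivial scalar multiples of $U_{\epsilon,y}$) to guarantee $\rho>0$ on all of $\mathbb R^4$, which is what makes the ratios $v_i/\rho$ globally well defined and hence constant. By contrast, the identification of the maximality condition for $f_h$ with the algebraic condition that makes the ansatz solve \eqref{sub-system} is the short, decisive observation, and the identity $l_h=\widetilde l_h$ is just the elementary scaling argument above.
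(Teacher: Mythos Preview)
Your proof is correct and, for the attainment and classification parts, takes a genuinely more self-contained route than the paper. The argument for $l_h=\widetilde l_h$ is identical to the paper's Lemma~\ref{lemma:equality_l}. For attainment, however, the paper invokes \cite[Lemma 2.4]{Yang 2018} as a black box, while you bypass this entirely: the diamagnetic inequality $|\nabla\rho|^2\le\sum_i|\nabla v_i|^2$ combined with the pointwise bound $f_h(\mathbf v)\le f^h_{max}\rho^4$ reduces the vector Rayleigh quotient \eqref{Vector Sobolev Inequality-1} to the scalar Sobolev quotient, giving simultaneously the lower bound $l_h\ge S^2/(4f^h_{max})$ and, by the explicit test configuration $X_0(f^h_{max})^{-1/2}U_{\epsilon,y}$, attainment with the exact value. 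For the classification, the paper refers to the argument of \cite[Theorem 2.1]{Tavares 2016}; your equality-case analysis of the three-step chain (pointwise $f_h$-maximality, Sobolev rigidity for $\rho$, and the Cauchy--Schwarz equality yielding $\nabla(v_i/\rho)=0$) is essentially the same mechanism made explicit, with the crucial observation that the Sobolev step forces $\rho=c_0U_{\epsilon,y}>0$ everywhere, so the ratios $v_i/\rho$ are globally defined. What your approach buys is a proof that is independent of the cited references and that produces the numerical value $l_h=S^2/(4f^h_{max})$ along the way; what the paper's approach buys is brevity, at the cost of relying on two external results.
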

\begin{remark}\label{7-3}
The existence and classification of ground states has been established in \cite{Yang 2018} for more general assumptions on the coefficients $\beta_{ij}$ and on the exponents. In the cooperative case we complement these results by providing a characterization in terms of a Nehari manifold, which is crucial in the proof of Theorem \ref{Theorem-1}. Moreover, we provide the characterization $l_h=\widetilde l_h$, which is crucial to present the nonexistence results for the limiting system (namely in the proof of the forthcoming Theorem \ref{limt system-6}).
\end{remark}

\begin{remark}\label{7}
Every $\mathbf{V}_{h}$ is a least energy positive solution of \eqref{sub-system} when each component of $X_{0}$ is not zero. We point out that both $X_{0}$ and $f^h_{max}$ are only dependent on $\beta_{ij}$ for $(i,j)\in I_{h}^{2}$.
\end{remark}

In the next subsection we will study the nonexistence of least energy solutions for $\Omega=\R^4$, $\lambda_1=\ldots=\lambda_d=0$. We remark that such result is independent of Theorem \ref{Theorem-1}, as it plays no role in its proof.

\subsubsection{A nonexistence result for $\Omega=\R^4$}

Observe that if $\Omega=\mathbb{R}^{4}$ and $\mathbf{u}$ is any a solution of \eqref{S-system}, then by the Pohozaev Identity and $\langle \nabla J(\mathbf{u}),\mathbf{u}\rangle=0$, it is easy to see that $\int_{\mathbb{R}^{4}}\sum_{i=1}^{d}\lambda_{i}u_{i}^{2}=0$. This yields that $\mathbf{u}\equiv \mathbf{0}$ if $\lambda_{1},\ldots,\lambda_{d}$ have the same sign. A natural question is what happens in the limiting case $\lambda_1,\ldots, \lambda_d=0$. Are there least energy positive solutions, or at least nonnegative solutions with nontrivial grouping? In this section we state a nonexistence result under assumptions of cooperation between elements withing the same group, competition between elements of different groups.

Consider the limiting system:
\begin{equation}\label{limit-system}
\begin{cases}
-\Delta u_{i}=\sum_{j=1}^{d}\beta_{ij}u_{j}^{2}u_{i}  ~\text{ in } \mathbb{R}^{4},\\
u_{i}\in \mathcal{D}^{1,2}(\mathbb{R}^{4})  \quad \forall i=1,...,d.
\end{cases}
\end{equation}
Define the associated energy
\begin{equation*}
E\left(\mathbf{u}\right):=\sum_{h=1}^{m}\int_{\mathbb{R}^{4}}\frac{1}{2}\sum_{i\in I_{h}}|\nabla u_{i}|^{2}-\frac{1}{4}\sum_{h,k=1}^{m}\int_{\mathbb{R}^{4}}\sum_{(i,j)\in I_{h}\times I_{k}}\beta_{ij}u_{j}^{2}u_{i}^{2}\,
\end{equation*}
and the level
\begin{equation}\label{Minimizer-2}
l:=\inf_{\mathcal{M}}E\left(\mathbf{u}\right),
\end{equation}
where
\begin{align}
\mathcal{M}:=
\Bigg\{ \mathbf{u}\in \mathcal{D}^{1,2}(\mathbb{R}^{4};\mathbb{R}^{d}): \sum_{i\in I_{h}}\|u_{i}\|_{i}\neq 0 \text{ and } \sum_{i\in I_{h}}
\partial_{i}E(\mathbf{u})u_{i}=0, \text{ for every } h=1,...,m
\Bigg\}.
\end{align}
Our last main result is the following.

\begin{thm}\label{limt system-6}
Assume \eqref{eq:beta_ij} and let $\mathbf{a}$ be an m-decomposition of $d$ for some $2\leq m\leq d$. If
\begin{itemize}
  \item $\beta_{ij}\geq 0$,  $\forall (i,j)\in \mathcal{K}_{1}$;
  \item $\beta_{ij}\leq 0$,  $\forall (i,j)\in \mathcal{K}_{2}$, and there exists $h_{1}\neq h_{2}$ such that $\beta_{ij}< 0$ for every $(i,j)\in I_{h_{1}}\times I_{h_{2}}$;
\end{itemize}
then $l$ is not achieved and $l=\sum_{h=1}^{m}l_{h}$.
\end{thm}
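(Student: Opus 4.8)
The plan is to prove the two claims --- that $l=\sum_{h=1}^m l_h$ and that $l$ is not attained --- essentially at the same time, by establishing the two inequalities $l\le \sum_h l_h$ and $l\ge\sum_h l_h$ and observing that equality forces any minimizing configuration to decouple in a way that is incompatible with being a genuine minimizer. First I would prove the easy inequality $l\le \sum_{h=1}^m l_h$. Take, for each $h$, a minimizer $\mathbf V_h$ for $l_h$ (which exists by Theorem \ref{limit system-6-1}), translate the supports far apart by replacing $\mathbf V_h$ with $\mathbf V_h(\cdot - R e_h)$ for $R\to\infty$, and glue them into a single vector $\mathbf u^R\in\mathcal D^{1,2}(\mathbb R^4;\mathbb R^d)$. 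Because the cross-group coupling terms $\beta_{ij}u_j^2u_i^2$ with $(i,j)\in\mathcal K_2$ involve products of functions with nearly disjoint supports, these terms tend to $0$ as $R\to\infty$; hence after a small rescaling on each block to land back on $\mathcal M$ (using that each $\mathbf V_h\in\mathcal M_h$ and continuity of the Nehari projection), we get $E(\mathbf u^R)\to \sum_h E_h(\mathbf V_h)=\sum_h l_h$, so $l\le\sum_h l_h$.

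Next, and this is the crux, I would prove $l\ge\sum_h l_h$. Let $\mathbf u\in\mathcal M$ be arbitrary. The key point is that the constraint defining $\mathcal M$ is block-diagonal: $\sum_{i\in I_h}\|u_i\|_i^2 = \sum_{i\in I_h}\int u_i^2\sum_{j=1}^d\beta_{ij}u_j^2$ for each $h$ (here $\|\cdot\|_i$ is just the $\mathcal D^{1,2}$ norm since $\lambda_i=0$). Split the right-hand side into the same-group part $\sum_{(i,j)\in I_h^2}\beta_{ij}u_i^2u_j^2$ and the cross-group part; the cross-group contributions to block $h$ are $\sum_{k\ne h}\sum_{(i,j)\in I_h\times I_k}\beta_{ij}u_i^2u_j^2 \le 0$ by the competition hypothesis $\beta_{ij}\le 0$ on $\mathcal K_2$. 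Therefore, denoting $\mathbf u_h=(u_i)_{i\in I_h}$,
\[
\|\mathbf u_h\|_{\mathbb D_h}^2 \;=\; \sum_{i\in I_h}\|u_i\|_i^2 \;\le\; \sum_{(i,j)\in I_h^2}\beta_{ij}u_i^2u_j^2,
\]
which says precisely that $\mathbf u_h\in\widetilde{\mathcal M}_h$ (after noting $\mathbf u_h\ne\mathbf 0$). Hence $\widetilde l_h\le \widetilde E_h(\mathbf u_h)=\tfrac14\|\mathbf u_h\|_{\mathbb D_h}^2$, and by Theorem \ref{limit system-6-1}, $\widetilde l_h=l_h$, so $\tfrac14\|\mathbf u_h\|_{\mathbb D_h}^2\ge l_h$. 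Now I evaluate $E(\mathbf u)$: using the block constraints to rewrite $E(\mathbf u)=\tfrac14\sum_{i=1}^d\|u_i\|_i^2 - (\text{a manifestly nonnegative correction coming from the negative cross terms})$; more precisely, from $\langle\nabla E,\mathbf u\rangle$-type identities one gets
\[
E(\mathbf u)=\frac14\sum_{h=1}^m\|\mathbf u_h\|_{\mathbb D_h}^2 \;-\;\frac14\sum_{h\ne k}\sum_{(i,j)\in I_h\times I_k}\beta_{ij}u_i^2u_j^2 \;\cdot\;(\text{appropriate combinatorial factor}),
\]
and since the subtracted sum is $\le 0$ (each $\beta_{ij}\le 0$), we obtain $E(\mathbf u)\ge \tfrac14\sum_h\|\mathbf u_h\|_{\mathbb D_h}^2\ge \sum_h l_h$. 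Taking the infimum over $\mathbf u\in\mathcal M$ gives $l\ge\sum_h l_h$, hence $l=\sum_h l_h$.

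Finally, for non-attainment, suppose $\mathbf u\in\mathcal M$ achieves $l=\sum_h l_h$. Tracing back through the chain of inequalities, equality forces: (i) all cross-group terms vanish, $\int_{\mathbb R^4}u_i^2u_j^2=0$ for every $(i,j)\in I_{h_1}\times I_{h_2}$ with $\beta_{ij}<0$; and (ii) each block satisfies $\tfrac14\|\mathbf u_h\|_{\mathbb D_h}^2 = l_h$ with $\mathbf u_h\in\widetilde{\mathcal M}_h$, so each $\mathbf u_h$ is a minimizer for $\widetilde l_h$ and hence, by Theorem \ref{limit system-6-1}, for $l_h$; in particular (up to replacing $u_i$ by $|u_i|$) each $\mathbf u_h=X_0^{(h)}(f_{max}^h)^{-1/2}U_{\epsilon_h,y_h}$ for some $X_0^{(h)}\in\mathcal X_h$, so every component $u_i$ with $i\in I_{h_1}$ is a strictly positive multiple of a strictly positive bubble $U_{\epsilon_{h_1},y_{h_1}}$ --- unless the corresponding entry of $X_0^{(h_1)}$ is zero. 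The contradiction comes from (i): there is at least one $i^\*\in I_{h_1}$ and $j^\*\in I_{h_2}$ with $\beta_{i^\*j^\*}<0$, and the minimizer $\mathbf V_{h_1}$ is fully nontrivial at least on the "support pattern" of $\mathcal X_{h_1}$; I would invoke the classification in Theorem \ref{limit system-6-1} (and connectivity of the cooperative block $I_h$, guaranteed since $\beta_{ij}\ge0$ there and $\beta_{ii}>0$) to argue that a minimizer of $l_h$ cannot have a zero component --- more carefully, even if some components vanish, the nonzero components are everywhere-positive multiples of the same $U_{\epsilon,y}$, which is nowhere zero, so $\int u_{i^\*}^2u_{j^\*}^2=0$ forces $u_{i^\*}\equiv 0$ or $u_{j^\*}\equiv 0$; iterating within the connected cooperative groups $I_{h_1}$ and $I_{h_2}$ (each component in a group is coupled to another via a positive $\beta$) then forces $\mathbf u_{h_1}\equiv\mathbf 0$ or $\mathbf u_{h_2}\equiv\mathbf 0$, contradicting the constraint $\sum_{i\in I_h}\|u_i\|_i\ne 0$ in $\mathcal M$. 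Hence $l$ is not attained.

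The main obstacle I anticipate is item (ii) in the last paragraph --- pinning down exactly which components of a block-minimizer can vanish and propagating the vanishing through the group. This requires using the precise classification $\mathbf V_h = X_0 (f^h_{max})^{-1/2}U_{\epsilon,y}$ together with a connectivity/irreducibility argument on the cooperative coupling graph within each group (using $\beta_{ii}>0$ and $\beta_{ij}\ge 0$ on $\mathcal K_1$), to conclude that if two groups $I_{h_1},I_{h_2}$ are fully competitively coupled then one of them must be entirely trivial, contradicting membership in $\mathcal M$. The combinatorial bookkeeping in rewriting $E(\mathbf u)$ via the block Nehari constraints is routine but must be done carefully to make the sign of the correction term transparent.
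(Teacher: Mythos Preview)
Your approach is essentially the same as the paper's, but two points deserve correction or simplification.

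First, on $\mathcal{M}$ the identity $E(\mathbf{u})=\tfrac14\sum_{h}\|\mathbf{u}_h\|_{\mathbb{D}_h}^2$ is \emph{exact}: summing the $m$ block Nehari constraints gives $\sum_h\|\mathbf{u}_h\|^2=\sum_{h,k}\sum_{(i,j)\in I_h\times I_k}\int\beta_{ij}u_i^2u_j^2$, so the quartic part equals half the quadratic part and $E(\mathbf{u})=\tfrac14\sum_h\|\mathbf{u}_h\|^2$ with no correction term. Your ``manifestly nonnegative correction'' is spurious; the inequality $E(\mathbf{u})\ge\sum_h l_h$ follows immediately from $\tfrac14\|\mathbf{u}_h\|^2\ge \widetilde l_h=l_h$.

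Second, your connectivity worry is unnecessary. The hypothesis is that $\beta_{ij}<0$ for \emph{every} $(i,j)\in I_{h_1}\times I_{h_2}$, not just for one pair. Once you know each $\mathbf{u}_h$ is a minimizer of $\widetilde l_h$ (hence of $l_h$, hence lies on $\mathcal{M}_h$), the identity $\|\mathbf{u}_h\|^2=M_B(\mathbf{u})_{hh}$ combined with the $\mathcal{M}$-constraint $\|\mathbf{u}_h\|^2=\sum_k M_B(\mathbf{u})_{hk}$ and $M_B(\mathbf{u})_{hk}\le 0$ for $k\ne h$ forces $M_B(\mathbf{u})_{h_1h_2}=0$, so $\int u_i^2u_j^2=0$ for \emph{all} $(i,j)\in I_{h_1}\times I_{h_2}$. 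Now simply pick any $i\in I_{h_1}$ with $u_i\not\equiv 0$ and any $j\in I_{h_2}$ with $u_j\not\equiv 0$ (these exist since $\mathbf{u}_{h_1},\mathbf{u}_{h_2}\ne\mathbf{0}$); no propagation through a cooperative graph is needed. The paper reaches the same contradiction slightly differently: rather than invoking the classification $\mathbf{V}_h=X_0(f^h_{max})^{-1/2}U_{\epsilon,y}$, it observes that a minimizer on $\mathcal{M}$ is a solution of the system (natural constraint), so the strong maximum principle makes any nontrivial component strictly positive, and then $\|\mathbf{u}_{h_1}\|^2\le M_B(\mathbf{u})_{h_1h_1}+\int\beta_{i_{h_1}i_{h_2}}u_{i_{h_1}}^2u_{i_{h_2}}^2<M_B(\mathbf{u})_{h_1h_1}$ contradicts $\mathbf{u}_{h_1}\in\mathcal{M}_{h_1}$.
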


Clearly, as a byproduct of this result, under the previous assumptions all ground state solutions of \eqref{limit-system} have some components that vanish, and there are no least energy positive solutions.
\begin{remark}\label{1.3-2}
We observe that the first authors and N. Soave  in \cite{Tavares 2016-1} proved $l$ is not achieved for the subcritical case $N\leq 3$. Here, based on Theorem \ref{limit system-6-1}, we show that the above result holds also for critical case.
\end{remark}

\begin{remark}
This result plays no role in the proof of Theorem \ref{Theorem-1}. Actually, the later does not depend on any existence result for the limiting system \eqref{limit-system}, only on results for the sub-systems \eqref{sub-system}.
\end{remark}
Applying Theorem \ref{limt system-6} in the particular case $m=d$, we obtain the following.

\begin{cor}\label{1-4}
If
\begin{itemize}
  \item $\beta_{ii}> 0$,  for every $i=1,\ldots, d$;
  \item $\beta_{ij}\leq 0$ for every $i\neq j$, $i,j=1,\ldots, d$;
  \item there exists $i_{1}\neq j_{1}$ such that $\beta_{i_{1}j_{1}}< 0$;
\end{itemize}
then $l$ is not achieved and $l=\sum_{h=1}^{d}l_{h}$.
\end{cor}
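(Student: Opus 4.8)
\textbf{Proof proposal for Corollary \ref{1-4}.}

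The plan is to derive Corollary \ref{1-4} as a direct application of Theorem \ref{limt system-6} in the specific case $m=d$. First I would observe that when $m=d$, the only $d$-decomposition of $d$ is $\mathbf a=(0,1,2,\ldots,d)$, so each group $I_h=\{h\}$ is a singleton. Consequently the ``same group'' index set becomes $\mathcal K_1=\{(i,j)\in I_h^2:\ i\neq j\ \text{for some }h\}=\emptyset$, while the ``different group'' set becomes $\mathcal K_2=\{(i,j):\ i\neq j\}$. Thus the hypothesis $\beta_{ij}\geq 0$ for all $(i,j)\in\mathcal K_1$ is vacuously satisfied, and the hypothesis $\beta_{ij}\leq 0$ for all $(i,j)\in\mathcal K_2$ translates precisely to the Corollary's assumption $\beta_{ij}\leq 0$ for every $i\neq j$. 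Moreover the requirement in Theorem \ref{limt system-6} that there exist $h_1\neq h_2$ with $\beta_{ij}<0$ for every $(i,j)\in I_{h_1}\times I_{h_2}$ reduces, since $I_{h_1}\times I_{h_2}=\{(h_1,h_2)\}$ is a single pair, to the existence of indices $i_1\neq j_1$ with $\beta_{i_1 j_1}<0$ (take $h_1=i_1$, $h_2=j_1$; note $\beta_{i_1j_1}=\beta_{j_1i_1}$ by \eqref{eq:beta_ij}, so the choice is symmetric). Finally, $\beta_{ii}>0$ for all $i$ is exactly assumption \eqref{eq:beta_ij}, which Theorem \ref{limt system-6} presupposes.

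Second, I would check that the definitions of $l$, $E$, $\mathcal M$, and the sub-system levels $l_h$ specialize correctly. With $I_h=\{h\}$, the energy $E(\mathbf u)$ in \eqref{Minimizer-2} becomes $\sum_{i=1}^d\int_{\R^4}\tfrac12|\nabla u_i|^2-\tfrac14\sum_{i,j=1}^d\int_{\R^4}\beta_{ij}u_j^2u_i^2$, which is the natural energy for the full limiting system \eqref{limit-system}; the constraint set $\mathcal M$ becomes $\{\mathbf u:\ \|u_i\|_i\neq 0\ \text{and}\ \partial_i E(\mathbf u)u_i=0\ \forall i\}$, i.e. the componentwise Nehari set. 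The sub-system \eqref{sub-system} for the group $I_h=\{h\}$ is simply $-\Delta v=\beta_{hh}v^2 v$ on $\R^4$, so by Theorem \ref{limit system-6-1} (applied with the $1\times 1$ ``matrix'' $\beta_{hh}>0$, for which $f^h_{max}=\beta_{hh}$ and $\mathcal X_h=\{\pm 1\}$) the level $l_h$ is attained by $\beta_{hh}^{-1/2}U_{\epsilon,y}$ and equals $\tfrac14 S^2/\beta_{hh}$, where $S$ is the Sobolev constant of $\mathcal D^{1,2}(\R^4)\hookrightarrow L^4(\R^4)$. Therefore $l=\sum_{h=1}^d l_h$ is just the statement asserted in the Corollary.

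Third, I would simply invoke Theorem \ref{limt system-6}: all of its hypotheses hold (with $2\leq m=d\leq d$, which forces $d\geq 2$ — consistent with the standing assumption $d\geq 2$ of the paper), so its conclusion gives that $l$ is not achieved and $l=\sum_{h=1}^d l_h$. This is exactly Corollary \ref{1-4}. No further argument is needed; the content is entirely in recognizing that $m=d$ is an admissible decomposition and that the singleton grouping makes $\mathcal K_1$ empty and $\mathcal K_2$ the full off-diagonal.

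The only point that requires a moment's care — and hence the ``main obstacle'', though it is minor — is the translation of the clause ``there exists $h_1\neq h_2$ such that $\beta_{ij}<0$ for every $(i,j)\in I_{h_1}\times I_{h_2}$''. One must note that for singleton groups this product is the single pair $(h_1,h_2)$, so the clause is genuinely equivalent to ``there exists $i_1\neq j_1$ with $\beta_{i_1 j_1}<0$'' and not to the stronger demand of a whole block of negative entries. Once this is observed, the deduction is immediate, and I would present the proof simply as ``Apply Theorem \ref{limt system-6} with $m=d$ and $\mathbf a=(0,1,\ldots,d)$; then $\mathcal K_1=\emptyset$, $\mathcal K_2=\{(i,j):i\neq j\}$, and all hypotheses are satisfied, yielding the claim.''
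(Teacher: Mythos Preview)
Your proposal is correct and matches the paper's approach exactly: the paper simply states that Corollary \ref{1-4} follows by ``Applying Theorem \ref{limt system-6} in the particular case $m=d$,'' and your write-up just unpacks this specialization (singleton groups, $\mathcal{K}_1=\emptyset$, the strict-negativity block condition collapsing to a single pair) in more detail than the paper bothers to record.
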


Up to our knowledge, \cite{Zou 2018} is the only reference considering nonexistence results for problem \eqref{Minimizer-2} in the critical case with mixed coefficients. Our Theorem \ref{limt system-6} improves \cite[Theorems 1.4 \& 1.5]{Zou 2018}, which deal with a situation with $m=2$ (two groups) and an arbitrary number of equations, together with some technical conditions for the coefficient matrix $B$.

\subsection{Structure of the paper}
In Section \ref{sec2} we prove Theorem \ref{limit system-6-1} and Theorem \ref{limt system-6}. Section \ref{sec3} is devoted to some auxiliary results which will ultimately lead to Theorem \ref{Theorem-1}. In Subsection \ref{subsec3.1} and Subsection \ref{subsec3.2} we present some new energy estimates, see Lemma \ref{Pre-1}, Theorem \ref{Energy Estimates} and Theorem \ref{Energy Estimation-1,2,m-1}, which are important to prove Theorem \ref{Theorem-1}. We construct a Palais-Smale sequence at level $c$ in Subsection \ref{subsec3.3}. Section \ref{sec4} is devoted to the proofs of the main theorems on a bounded domain. In Subsection \ref{subsec4.1} we show that Theorem \ref{Theorem-1} holds for the case of one group. Subsection \ref{subsec4.2} is then devoted to the proof of Theorem \ref{Theorem-1} in the general case, as well as to the proof of its corollaries.

Theorem \ref{Energy Estimation-1,2,m-1} compares the energy between all possible subsystems. Its proof is inspired by
\cite[Lemma 5.1]{Zou 2012}; however, because of the presence of multi-components, the method in \cite{Zou 2012} cannot be used here directly, and we need some crucial modifications for our proof. The fact that we are dealing with many components does not allow to perform explicit computations as in the two equation case; this is for instance the case in order to show positive definiteness of certain matrices, and so we rely on the notion of strictly diagonally dominant matrices as in \cite{Soave 2015,Tavares 2016-1}, see Lemma \ref{Con-4-1} and Proposition \ref{PS Sequence}. On the other hand, whenever projecting in the Nehari manifold one cannot in general obtain the explicit expression of the coefficients, and we rely on qualitative estimates instead (check for instance Lemma \ref{t-positive} ahead).

 The loss of compactness due to the appearance of the Sobolev critical exponent makes it difficult to acquire the existence of fully nontrivial solutions to \eqref{S-system}. For $d=2$, Chen and Zou \cite{Zou 2012} obtained their existence by comparing the least energy level to \eqref{S-system} with that of limit system ($\Omega=\mathbb{R}^{4}$ and $\lambda_{i}=0$) and scalar equations. They mainly use the equation \eqref{eq:single_critical} to estimate the energy level. However, for the mixed case with multi-components, \eqref{S-system} becomes very complicated, and we need some new ideas for our proof.
 We separate the components into $m$ groups as in \cite{Soave 2015,Tavares 2016-1}, and require that the interaction between components of the same group is cooperative, while the interaction between components belonging to different groups is competitive or weakly cooperative. We call each group as a sub-system and investigate the ground state level of the sub-system. Since the system \eqref{S-system} involves multi-components, we need to establish  new estimates (see Theorem \ref{Energy Estimates} and Theorem \ref{Energy Estimation-1,2,m-1}). Then we can compare energy levels of the system with those of appropriate sub-systems and sub-groups, and obtain the existence of nonnegative solutions with $m$ nontrivial components by induction on the number of groups. Moreover, under additional assumptions on $\beta_{ij}$, we acquire existence of least energy positive solutions of \eqref{S-system}. We stress that our method does not require a comparison between the level $c$ and the ground state level of the limiting system \eqref{limit-system}.

\subsection{Further notations}\label{subsec1.4}

\begin{itemize}
  \item The $L^{p}(\Omega)$ norms will be denoted by $|\cdot|_{p}$, $1\leq p\leq \infty$.
  \item Let
  \begin{equation}\label{eq:def_of_S}
  S:= \inf_{i=1,\ldots,d}\inf_{u\in H^{1}_{0}(\Omega)\setminus \{0\}}\frac{\|u\|^{2}_{i}}{|u|^{2}_{4}}.
  \end{equation}
  By the Sobolev embedding $H^1_0(\Omega)\hookrightarrow L^4(\Omega)$ and since $\lambda_i\in (-\lambda_1(\Omega),0)$, we have $S>0$. Moreover,
  \begin{equation}\label{Constant-1}
    S|u|^{2}_{4}\leq \|u\|^{2}_{i}\leq \int_{\Omega}|\nabla u|^2, ~\forall u \in H^{1}_{0}(\Omega).
  \end{equation}
  \item For $1\leq k\leq d$ and $\mathbf{u}=(u_{1},\cdots, u_{k})$, denote $|\nabla \mathbf{u}|^{2}:=\sum_{i=1}^{k}|\nabla u_{i}|^{2}$ and $|\mathbf{u}|:=(|u_{1}|,\cdots, |u_{k}|)$.
  \end{itemize}
  Take $\mathbf{a}=(a_1,\ldots, a_m)$ an $m$-decomposition of $d$, for some integer $m\in [1,d]$.
  \begin{itemize}
  \item Given $\mathbf{u}=(u_1,\ldots, u_d)\in H^{1}_{0}(\Omega; \mathbb{R}^{d})$ we set
  \begin{equation*}
  \mathbf{u}_{h}:=\left(u_{a_{h-1}+1},\ldots,u_{a_{h}}\right)\qquad \text{ for $h=1,\ldots,m$}.
  \end{equation*}
This way, we have $\mathbf{u}=(\mathbf{u}_1,\ldots, \mathbf{u}_m)$ and $H^1_0(\Omega;\R^d)= \prod_{h=2}^{m} H^1_0(\Omega)^{a_{h}-a_{h-1}}$.  Each space $(H^{1}_{0}(\Omega))^{a_{h}-a_{h-1}}$ is naturally endowed with the following scalar product and norm
  \begin{equation*}
  \langle\mathbf{u}^{1},\mathbf{u}^{2}\rangle_{h}:=\sum_{i\in I_{h}}\langle u^{1}_{i},u^{2}_{i}\rangle_{i} \text{ and }
  \|\mathbf{u}\|^{2}_{h}:=\langle\mathbf{u},\mathbf{u}\rangle_{h}.
  \end{equation*}
Sometimes we will use the notation $\mathbb{H}:= H^{1}_{0}(\Omega; \R^d)$, with norm $\|\mathbf{u}\|^{2}=\sum_{h=1}^{m}\|\mathbf{u}_{h}\|_{h}^{2}$.

  \item Let $\Gamma\subseteq \{1,\ldots,m\}$ and set $d_\Gamma:=|\cup_{k\in\Gamma}I_k|$. For $\mathbf{u}\in H^{1}_{0}(\Omega, \mathbb{R}^{d_\Gamma})$ define the $|\Gamma|\times |\Gamma|$ matrix
    \begin{equation*}
  M_{B}^\Gamma(\mathbf{u}):= \left( \sum_{(i,j)\in I_{h}\times I_{k} }\int_{\Omega}\beta_{ij}u_{i}^{2}u_{j}^{2}\right)_{h, k\in \Gamma}.
  \end{equation*}
Consider
  \begin{align}\label{Diagonally Dominant}
  \mathcal{E}_\Gamma&:= \left\{\mathbf{u}\in H^{1}_{0}(\Omega, \mathbb{R}^{d_\Gamma}): M_{B}^\Gamma(\mathbf{u}) \text{ is strictly diagonally dominant } \right\}\nonumber\\
  		&=\left\{\mathbf{u}\in H^{1}_{0}(\Omega, \mathbb{R}^{d_\Gamma}): \left|\sum_{(i,j)\in I_{h}^{2}}\int_\Omega \beta_{ij} u_i^2u_j^2 \right|  > \sum_{k\in \Gamma,k\neq h} \left|\sum_{(i,j)\in I_{h}\times I_{k} }\int_\Omega \beta_{ij} u_i^2u_j^2 \right|, \text{ for  } h\in\Gamma  \right\}.
  \end{align}
 For simplicity, denote $\mathcal{E}:=\mathcal{E}_\Gamma$ and $M_{B}(\mathbf{u}):=M_{B}^\Gamma(\mathbf{u})$ when $\Gamma=\{1,\ldots,m\}$.
  \end{itemize}
Recall that a square matrix that is strictly diagonally dominant and has positive diagonal terms is positive definite. Therefore, $M_{B}^\Gamma(\mathbf{u})$ is positive definite if $\mathbf{u}\in\mathcal{E}_\Gamma$ and $\beta_{ij}\geq 0$ for every $(i,j)\in \mathcal{K}_1$.

\section{\bf The limit system}\label{sec2}
In this section, we prove Theorem \ref{limit system-6-1} and Theorem \ref{limt system-6}.  Take $\mathbf{a}=(a_1,\ldots, a_m)$ an $m$-decomposition of $d$, for some integer $m\in [1,d]$, and fix $h\in \{1,\ldots, m\}$.

\begin{lemma}\label{2.2-2}
Assume $\beta_{ij}\geq 0$  $\forall (i,j)\in I_{h}^{2}$ and $\beta_{ii}>0$. Then ${l}_{h}$ is attained.
\end{lemma}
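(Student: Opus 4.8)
The plan is to establish existence of a minimizer for $l_h$ via the standard concentration-compactness argument adapted to the vector-valued setting, exploiting the scaling invariance of the limiting system on $\R^4$. First I would take a minimizing sequence $(\mathbf{v}^n)$ for $l_h$ on $\mathcal{M}_h$; since $J$ restricted to $\mathcal{M}_h$ controls $\|\mathbf{v}^n\|_{\mathbb{D}_h}^2$ (indeed on $\mathcal{M}_h$ one has $E_h(\mathbf{v}^n)=\tfrac14\|\mathbf{v}^n\|_{\mathbb{D}_h}^2$), the sequence is bounded and bounded away from $\mathbf{0}$, using that $l_h>0$ together with the vector Sobolev inequality \eqref{Vector Sobolev Inequality}. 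Because the problem is invariant under the scalings $\mathbf{v}(x)\mapsto \mu \mathbf{v}(\mu x)$ (both translations and dilations), I would first renormalize: using the characterization \eqref{Vector Sobolev Inequality-1} I may assume $\int_{\R^4}\sum_{(i,j)\in I_h^2}\beta_{ij}(v^n_j)^2(v^n_i)^2=1$ and $\|\mathbf{v}^n\|_{\mathbb{D}_h}^2\to 4l_h$. Replacing $\mathbf{v}^n$ by $(|v^n_i|)_{i\in I_h}$ does not change these quantities (since $\beta_{ij}\ge 0$ on $I_h^2$), so we may take $v^n_i\ge 0$.

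Next I would apply the concentration-compactness principle of Lions to the sequence of measures $\sum_{i\in I_h}|\nabla v^n_i|^2\,dx$; the key point is to rule out vanishing and dichotomy. Vanishing is excluded because it would force $\int \sum_{(i,j)}\beta_{ij}(v^n_j)^2(v^n_i)^2\to 0$, contradicting the normalization — here one uses that the quartic form is controlled by $L^4$ norms and invokes the vanishing lemma. Dichotomy is excluded by the strict subadditivity of the level: if the mass splits, the limiting inequality would read $l_h \ge l_h^{\theta} + l_h^{1-\theta}$ for the corresponding rescaled functionals, but by the scaling $\mathbf{v}\mapsto \mu\mathbf{v}(\mu\,\cdot)$ one has $l_h^\theta = \theta^{1/2} l_h$ type behavior, and $\theta^{1/2}+(1-\theta)^{1/2}>1$ for $\theta\in(0,1)$, contradiction. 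Hence compactness holds: after suitable translations and dilations, $\mathbf{v}^n\rightharpoonup \mathbf{V}_h$ in $\mathbb{D}_h$ with the quartic integral converging, which forces $\|\mathbf{V}_h\|_{\mathbb{D}_h}^2\le 4l_h$ and $\int \sum_{(i,j)}\beta_{ij}(V_{h,j})^2(V_{h,i})^2 \ge 1$; by \eqref{Vector Sobolev Inequality} this gives equality, so $\mathbf{V}_h$ is a (nonnegative, nonzero) minimizer. A Lagrange multiplier argument then shows $\mathbf{V}_h$ solves \eqref{sub-system}.

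The main obstacle I anticipate is the dichotomy step: in the scalar Brezis–Nirenberg/limiting-equation case the strict subadditivity is classical, but here one must verify carefully that the rescaled sub-levels $l_h^\theta$ inherit the exact scaling behavior of $l_h$ despite the vector structure and the coupling terms, i.e. that splitting the concentration mass cannot be "cheaper" than keeping it together. This essentially reduces to the homogeneity of $E_h$ under the one-parameter family of critical dilations, which the functional does possess, but it needs to be stated cleanly. An alternative, perhaps cleaner, route — and the one I would actually favor given that Theorem \ref{limit system-6-1} (to be proven later) identifies the minimizers explicitly — is to bypass concentration-compactness: use the characterization \eqref{Vector Sobolev Inequality-1} to reduce $l_h$ to the scalar problem. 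Indeed, for $\mathbf{v}=(t_i w)_{i\in I_h}$ with $w\in \mathcal{D}^{1,2}(\R^4)$ and $|t|=1$, the Rayleigh quotient in \eqref{Vector Sobolev Inequality-1} becomes $\tfrac14 \big(\int|\nabla w|^2\big)^2\big/\big(f_h(t)\int w^4\big)$; optimizing over $t$ gives the factor $1/f^h_{max}$ and optimizing over $w$ gives the scalar best Sobolev constant, attained at $U_{\epsilon,y}$. One then checks that this ansatz is in fact optimal among all $\mathbf{v}$, which again uses $\beta_{ij}\ge 0$ and a pointwise/vector Sobolev comparison. Either way, the attainment of $l_h$ follows; I would present the concentration-compactness version as the self-contained proof here and defer the explicit classification to Theorem \ref{limit system-6-1}.
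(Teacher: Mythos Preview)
Your proposal is a genuinely different route from the paper's. The paper does not prove this lemma from scratch: it simply recasts $l_h$ as a constrained minimization problem of the form $I_M^\gamma:=\inf\{\,\sum_{i\in I_h}\int|\nabla u_i|^2:\ \sum_{(i,j)\in I_h^2}\int\beta_{ij}u_i^2u_j^2=\gamma\,\}$, observes the scaling identity $\gamma_G:=(I_M^1)^2=\gamma^{-1}(I_M^\gamma)^2=4l_h$, checks that the hypotheses of \cite[Lemma~2.4]{Yang 2018} are satisfied because $\beta_{ij}\ge 0$ and $\beta_{ii}>0$, and then cites that lemma to conclude that $I_M^{\gamma_G}$ is attained. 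In other words, the paper outsources the entire compactness analysis to \cite{Yang 2018}.

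Your concentration--compactness sketch is in principle a valid self-contained substitute, and your alternative ``reduce to the scalar Sobolev problem via the ansatz $\mathbf{v}=(t_i w)$'' is in fact very close to the explicit classification that the paper establishes later in Theorem~\ref{limit system-6-1}. What your approach buys is independence from \cite{Yang 2018}; what the paper's approach buys is brevity. One caution on your dichotomy step: you apply Lions' principle to the measures $\sum_i|\nabla v_i^n|^2\,dx$, but the strict subadditivity you invoke, $I^\theta+I^{1-\theta}>I^1$, is phrased in terms of the \emph{quartic} constraint level, not the Dirichlet mass. For the argument to close you need the splitting of the Dirichlet energy to force a corresponding splitting of $\int\sum\beta_{ij}(v_i^n)^2(v_j^n)^2$ into pieces summing to~$1$; this is true but requires either applying concentration--compactness to the $L^4$-type measure instead, or arguing via a cutoff/Br\'ezis--Lieb decomposition that the cross terms vanish. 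As written, the link between dichotomy of the gradient measure and the subadditivity inequality for $I^\gamma$ is asserted rather than proved.
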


\begin{proof}[\bf{Proof}]
Firstly, we need to introduce some notations as in \cite{Yang 2018}. Set $|I_{h}|=M$. Define
$$
I^{\gamma}_{M}:=\inf_{J_{M}(\mathbf{u})=\gamma}I_{M}(\mathbf{u}), \quad \gamma_{G}:=\left(\inf_{J_{M}(\mathbf{u})=1}I_{M}(\mathbf{u})\right)^{2},
$$
where
$$
J_{M}(\mathbf{u}):=\sum_{(i,j)\in I_{h}^{2}}\int_{\mathbb{R}^{4}}\beta_{ij}u_i^2u_j^2, \quad
I_{M}(\mathbf{u}):=\sum_{i\in I_{h}}\int_{\mathbb{R}^{4}}|\nabla u_i|^2.
$$
It is clear that
\begin{equation}\label{2-2-1}
\gamma_{G}=(I^{1}_{M})^2=\gamma^{-1}(I^{\gamma}_{M})^2.
\end{equation}
We deduce from \eqref{Vector Sobolev Inequality-1} that $\gamma_{G}=4l_h$. It remains to prove that $I^{\gamma}_{M}$ is attained for some $\gamma>0$. Observe that the assumptions in \cite[Lemma 2.4]{Yang 2018} are true due to the fact that $\beta_{ij}\geq 0$  $\forall (i,j)\in I_{h}^{2}$ and $\beta_{ii}>0$; thus $I^{\gamma_{G}}_{M}$ is achieved. Therefore, ${l}_{h}$ is attained.
\end{proof}

\begin{lemma}\label{lemma:equality_l}
We have $l_h=\widetilde l_h$, and every minimizer for $\widetilde l_h$ is a minimizer for $l_h$.
\end{lemma}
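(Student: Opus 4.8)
The goal is to show the two Nehari-type levels $l_h$ and $\widetilde l_h$ coincide, and that any minimizer for the relaxed problem $\widetilde l_h$ is automatically a minimizer for $l_h$. Since $\widetilde{\mathcal M}_h \supseteq \mathcal M_h$ (the constraint $\|\mathbf v\|_{\mathbb D_h}^2 \le \int \sum \beta_{ij} v_i^2 v_j^2$ is weaker than the equality, once we note that on $\mathcal M_h$ the functionals $E_h$ and $\widetilde E_h$ agree because $E_h(\mathbf v) = \frac14\|\mathbf v\|_{\mathbb D_h}^2$ whenever $\langle \nabla E_h(\mathbf v),\mathbf v\rangle=0$), the inequality $\widetilde l_h \le l_h$ is immediate, as already remarked in the text. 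The real content is the reverse inequality $l_h \le \widetilde l_h$, together with the rigidity statement about minimizers.

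First I would take an arbitrary $\mathbf v \in \widetilde{\mathcal M}_h$, so $\mathbf v \ne \mathbf 0$ and $A := \|\mathbf v\|_{\mathbb D_h}^2 \le \int_{\mathbb R^4}\sum_{(i,j)\in I_h^2}\beta_{ij}v_i^2 v_j^2 =: C$. Note $C>0$ since $A>0$. The plan is to rescale: for $t>0$ consider $t\mathbf v$, whose scaling on the Nehari manifold is governed by $g(t) := \langle \nabla E_h(t\mathbf v), t\mathbf v\rangle = t^2 A - t^4 C$; this vanishes at $t_\ast = (A/C)^{1/2} \le 1$, and $t_\ast \mathbf v \in \mathcal M_h$. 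Using the characterization $l_h = \inf_{\mathbf w \ne \mathbf 0}\max_{t>0}E_h(t\mathbf w)$ from \eqref{Vector Sobolev Inequality-1} (equivalently, evaluating $E_h$ at the point $t_\ast\mathbf v\in\mathcal M_h$ and using $E_h = \widetilde E_h$ there), one gets
\[
l_h \le E_h(t_\ast \mathbf v) = \tfrac14 t_\ast^2 A = \tfrac14\,\frac{A^2}{C} \le \tfrac14 A = \widetilde E_h(\mathbf v),
\]
where the last inequality uses $A \le C$. Taking the infimum over $\mathbf v \in \widetilde{\mathcal M}_h$ gives $l_h \le \widetilde l_h$, hence equality.

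For the rigidity statement, suppose $\mathbf v$ is a minimizer for $\widetilde l_h$, so $\widetilde E_h(\mathbf v) = \tfrac14 A = \widetilde l_h = l_h$. Running the chain of inequalities above with this $\mathbf v$, every inequality must be an equality; in particular $\tfrac14 A^2/C = \tfrac14 A$ forces $A = C$, i.e. $\|\mathbf v\|_{\mathbb D_h}^2 = \int \sum \beta_{ij}v_i^2 v_j^2$, which is exactly the condition $\langle \nabla E_h(\mathbf v),\mathbf v\rangle = 0$; thus $\mathbf v \in \mathcal M_h$, and $E_h(\mathbf v) = \tfrac14 A = l_h$, so $\mathbf v$ is a minimizer for $l_h$.

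The only point requiring a little care — and the one I'd single out as the main (minor) obstacle — is making sure the variational identities are used in the right direction: specifically that $\max_{t>0}E_h(t\mathbf v)$ is attained at $t=t_\ast$ and equals $\tfrac14 A^2/C$ (an elementary one-variable computation on $t\mapsto \tfrac12 t^2 A - \tfrac14 t^4 C$), and that the first equality in \eqref{Vector Sobolev Inequality-1} is genuinely available here, which it is under the stated hypotheses $\beta_{ij}\ge 0$ on $I_h^2$, $\beta_{ii}>0$ (these guarantee $C>0$ and $l_h>0$, so no degeneracy occurs). No compactness is needed for this lemma — attainment of $l_h$ was handled separately in Lemma \ref{2.2-2} — so the argument is purely algebraic once those characterizations are in hand.
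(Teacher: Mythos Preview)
Your proof is correct and follows essentially the same approach as the paper: both rescale an arbitrary $\mathbf v\in\widetilde{\mathcal M}_h$ by $t_\ast=(A/C)^{1/2}\le 1$ to land on $\mathcal M_h$, use $E_h(t_\ast\mathbf v)=\tfrac14 t_\ast^2 A\le \tfrac14 A=\widetilde E_h(\mathbf v)$ to obtain $l_h\le\widetilde l_h$, and then observe that for a minimizer the inequality $t_\ast\le 1$ must be an equality. Your write-up is slightly more detailed, but the argument is the same.
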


\begin{proof}[\bf{Proof}]
This was proved in \cite[Lemma 5.2]{Tavares 2016-1}, but we sketch it here for completeness. We already know that $\tilde l_h\leq l_h$. If $\mathbf{v}\in \widetilde M_h$ then take $t^2:=(\int_\Omega  |\nabla \mathbf{v}|^2)/(\int_\Omega \sum_{(i,j)\in I_h^2} \beta_{ij} v_i^2 v_j^2)\leq 1 $, so that $t\mathbf{v}\in \mathcal{M}_h$. Then
\[
l_h \leq E_h(tv) = \frac{t^2}{4}\int_\Omega |\nabla \mathbf{v}|^2\leq \widetilde E_h(\mathbf{v}),
\]
and by taking the infimum for $\mathbf{v}\in \mathcal{M}_h$ we get $l_h\leq \widetilde l_h$ and so $l_h=\widetilde l_h$. If $\mathbf{v}$ is a minimizer for $\widetilde l_h$ (there exists at least one by what we have just seen combined with Lemma \ref{2.2-2}), then necessarily $t=1$ and $\mathbf{v}\in \mathcal{M}_h$.
\end{proof}

\begin{proof}[\bf Proof of Theorem \ref{limit system-6-1}.] The first part of the theorem follows from Lemmas \ref{2.2-2} and \ref{lemma:equality_l}. Now let $\mathbf{V}_h$ be a minimizer. By taking the absolute value, we may assume that it is nonnegative. By using the Lagrange multiplier rule and observing that $\mathcal{M}_h$ is a natural constraint, we see that $\mathbf{V}_h$ solves \eqref{sub-system}. Finally, once we know that the ground state level is achieved, we can follow the proof of \cite[Theorem 2.1]{Tavares 2016} (which is stated for subcritical problems, for $\lambda>0$ and $\beta_{ij}=b$ but works exactly in the same way in our framework) and deduce that $\mathbf{V}_h=X_0W$, where $W$ is a ground state solution of $-\Delta W=f_{max}W^3$. We can now conclude by taking the scaling $W=(f_{max}^h)^{-1/2}U_{\epsilon,y}$. Hence, $\mathbf{V}_{h}=X_0(f_{max}^h)^{-1/2}U_{\epsilon,y}$.
\end{proof}

The rest of the section is dedicated to the proof of Theorem \ref{limt system-6}. Having Lemmas \ref{2.2-2} and \ref{lemma:equality_l} at hand also in the critical case, we can now follow the strategy of \cite[Theorem 1.6]{Tavares 2016-1}. We will therefore sketch the proof, highlighting both the similarities as well as the differences. Take a nonnegative minimizer $\mathbf{V}_h=(V^h_i)_{i\in I_h}$ of $h=1,\ldots, m$. We need the following lemma.

\begin{lemma}\label{Decay 2.5}
Let $e_{1}\neq e_{2}\in \mathbb{S}^{N-1}$. Then, whenever $h_{1}\neq h_{2}$,
\begin{equation*}
\lim_{R\rightarrow +\infty}\int_{\mathbb{R}^{4}}\sum_{(i,j)\in I_{h_{1}}\times I_{h_{2}}}
\left(V_{i}^{h_{1}}(x-Re_{1})V_{j}^{h_{2}}(x-Re_{2})\right)^{2}=0.
\end{equation*}
\end{lemma}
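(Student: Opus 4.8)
The plan is to exploit the explicit form of the minimizers provided by Theorem \ref{limit system-6-1}, namely $\mathbf{V}_{h}=X_{0}^{h}(f^{h}_{max})^{-1/2}U_{\epsilon_{h},y_{h}}$, so that every component $V_{i}^{h}$ is (a constant multiple of) a bubble $U_{\epsilon_{h},y_{h}}$. Thus each $(V_{i}^{h_{1}}(x-Re_{1})\,V_{j}^{h_{2}}(x-Re_{2}))^{2}$ is, up to a multiplicative constant depending only on $X_{0}^{h_{1}}, X_{0}^{h_{2}}, f^{h_{1}}_{max}, f^{h_{2}}_{max}$, equal to $\big(U_{\epsilon_{h_{1}},y_{h_{1}}}(x-Re_{1})\big)^{2}\big(U_{\epsilon_{h_{2}},y_{h_{2}}}(x-Re_{2})\big)^{2}$. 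Since the sum over $(i,j)\in I_{h_{1}}\times I_{h_{2}}$ is finite, it suffices to prove that for any fixed $\epsilon_{1},\epsilon_{2}>0$ and $z_{1},z_{2}\in\R^{4}$,
\begin{equation*}
\lim_{R\to+\infty}\int_{\R^{4}} U_{\epsilon_{1},z_{1}}^{2}(x-Re_{1})\,U_{\epsilon_{2},z_{2}}^{2}(x-Re_{2})\,dx=0.
\end{equation*}
After the change of variables absorbing $z_1,z_2$ into the translation, this is the statement that two bubbles whose centres drift apart at linear speed $R|e_{1}-e_{2}|\to\infty$ have asymptotically vanishing $L^{2}\times L^{2}$ interaction.

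The key computation is an elementary decay estimate. Recall $U_{\epsilon,y}(x)=2\sqrt{2}\,\epsilon/(\epsilon^{2}+|x-y|^{2})$, so $U_{\epsilon,y}^{2}(x)\le C_{\epsilon}(1+|x-y|^{2})^{-2}\le C_{\epsilon}(1+|x-y|)^{-4}$, and moreover $U_{\epsilon,y}^{2}\in L^{2}(\R^{4})$. Denote $P_{1}=Re_{1}+z_{1}$, $P_{2}=Re_{2}+z_{2}$, so $|P_{1}-P_{2}|\ge R|e_{1}-e_{2}|-|z_{1}-z_{2}|\to\infty$. Split $\R^{4}=A_{1}\cup A_{2}$ where $A_{1}=\{x:|x-P_{1}|\ge \tfrac12|P_{1}-P_{2}|\}$ and $A_{2}$ its complement; on $A_{2}$ one has $|x-P_{2}|\ge\tfrac12|P_{1}-P_{2}|$ by the triangle inequality. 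On $A_{1}$ bound $U_{\epsilon_{1},P_{1}}^{2}(x)$ by its sup over that region, which is $O(|P_{1}-P_{2}|^{-4})$, and integrate the other factor using $\int_{\R^4}U_{\epsilon_{2},P_{2}}^{2}<\infty$; symmetrically on $A_{2}$. This gives
\begin{equation*}
\int_{\R^{4}} U_{\epsilon_{1},P_{1}}^{2}(x)\,U_{\epsilon_{2},P_{2}}^{2}(x)\,dx\le C\,|P_{1}-P_{2}|^{-4}\xrightarrow[R\to\infty]{}0,
\end{equation*}
and summing over the finitely many pairs $(i,j)\in I_{h_{1}}\times I_{h_{2}}$ closes the argument. (Alternatively, one may invoke dominated convergence: the integrand converges pointwise to $0$ for a.e.\ $x$ while being dominated, uniformly in large $R$, by an integrable function after a symmetric split — but the explicit bound above is cleaner and quantitative.)

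I do not expect a genuine obstacle here; the only mild point of care is making sure the decay estimate is uniform enough to pass to the limit, which is handled by the region-splitting above, and remembering that the constants coming from $X_{0}^{h}$ and $f^{h}_{max}$ are harmless since the sum is finite. The statement is really just the standard fact that well-separated Aubin–Talenti bubbles decouple, specialized to the $L^{4}$-type interaction term in dimension four; everything reduces to Theorem \ref{limit system-6-1} plus a one-line integral estimate.
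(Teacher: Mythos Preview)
Your overall strategy---reducing via Theorem \ref{limit system-6-1} to an interaction integral between two translated Aubin--Talenti bubbles and then estimating that integral directly---is sound and more elementary than the paper's route. However, there is a concrete error in the region-splitting step: you write ``integrate the other factor using $\int_{\R^4}U_{\epsilon_2,P_2}^2<\infty$'', but in dimension four this integral is \emph{infinite}. Indeed $U_{\epsilon,y}^{2}(x)\sim |x-y|^{-4}$ at infinity, and $\int_{|x|>1}|x|^{-4}\,dx=\omega_{3}\int_{1}^{\infty}r^{-1}\,dr=+\infty$. So the bound $\int_{A_{1}}U_{1}^{2}U_{2}^{2}\le \|U_{1}^{2}\|_{L^{\infty}(A_{1})}\int_{\R^{4}}U_{2}^{2}$ does not yield anything.

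The fix is immediate and you already stated the relevant fact: $U_{\epsilon,y}^{2}\in L^{2}(\R^{4})$. Apply Cauchy--Schwarz on $A_{1}$ instead:
\[
\int_{A_{1}}U_{1}^{2}U_{2}^{2}\le \Big(\int_{A_{1}}U_{1}^{4}\Big)^{1/2}\Big(\int_{\R^{4}}U_{2}^{4}\Big)^{1/2},
\]
and now $\int_{A_{1}}U_{1}^{4}\le C\int_{|y|\ge \frac12|P_{1}-P_{2}|}|y|^{-8}\,dy=O(|P_{1}-P_{2}|^{-4})$, giving a decay of order $|P_{1}-P_{2}|^{-2}$; the $A_{2}$ piece is symmetric. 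With this correction your argument goes through.

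For comparison, the paper avoids any explicit splitting: after the same reduction it observes that $V_{j}^{h_{2}}(\cdot+R(e_{1}-e_{2}))\rightharpoonup 0$ weakly in $L^{4}(\R^{4})$ (pointwise convergence to $0$ plus uniform $L^{4}$ bound) and then uses H\"older in the form $\int (V_{i}^{h_{1}})^{2}(V_{j,R}^{h_{2}})^{2}\le \big(\int (V_{i}^{h_{1}})^{3}V_{j,R}^{h_{2}}\big)^{2/3}\big(\int (V_{j,R}^{h_{2}})^{4}\big)^{1/3}$, where the first factor vanishes since $(V_{i}^{h_{1}})^{3}\in L^{4/3}$. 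Your (corrected) direct estimate is more quantitative; the paper's argument is softer but sidesteps the borderline-integrability issue entirely.
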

\begin{proof}[\bf{Proof}]
For every $(i,j)\in I_{h_{1}}\times I_{h_{2}}$, we only need to prove that
\begin{equation*}
\lim_{R\rightarrow +\infty}\int_{\mathbb{R}^{4}}\left(V_{i}^{h_{1}}(x)V_{j}^{h_{2}}(x+Re_{1}-Re_{2})\right)^{2}=0.
\end{equation*}
Here we cannot argue as in \cite[Lemma 5.3]{Tavares 2016-1}  due to the decay of $V_h$ in the critical case. Set $V_{j,R}^{h_{2}}(x):=V_{j}^{h_{2}}(x+Re_{1}-Re_{2})$.
Since $e_{1}\neq e_{2}$, $R|e_{1}- e_{2}|\rightarrow +\infty$ as $R\rightarrow +\infty$, and each component of $V_h$ is a multiple of a bubble $U_{\epsilon, y}$ (by Theorem \ref{limit system-6-1}). Then $V_{j}^{h_{2}}(x+Re_{1}-Re_{2})$ converges almost everywhere to 0 as $R\to \infty$, and it is uniformly bounded in $L^4(\R^4)$. Therefore, by \cite[Proposition 5.4.7]{Willem 2013},   $V_{j}^{h_{2}}(x+Re_{1}-Re_{2})\rightharpoonup 0$ weakly in $L^{4}(\mathbb{R}^{4})$ as $R\rightarrow\infty$.
Hence,
\begin{equation*}
\lim_{R\rightarrow +\infty}\int_{\mathbb{R}^{4}}\left(V_{i}^{h_{1}}(x)V_{j,R}^{h_{2}}(x)\right)^{2}\leq
\lim_{R\rightarrow +\infty}\left(\int_{\mathbb{R}^{4}}(V_{i}^{h_{1}})^{3}V_{j,R}^{h_{2}}\right)^{\frac{2}{3}}
\left(\int_{\mathbb{R}^{4}}(V_{j,R}^{h_{2}})^{4}\right)^{\frac{1}{3}}=0. \qedhere
\end{equation*}
\end{proof}
\begin{proof}[\bf Proof of Theorem \ref{limt system-6}.]
 Firstly, we claim that $l=\sum_{h=1}^{m}l_{h}$. Based on Lemma \ref{Decay 2.5}, the proof is similar to those of Lemma 5.4 and Lemma 5.5 in \cite{Tavares 2016-1}, so we omit it. Now assume by contradiction that there exists $\mathbf{u}\in \mathcal{M}$ such that $E(\mathbf{u})=l$. By \cite[Lemma 2.3]{Tavares 2016-1} (which also holds in $\mathcal{D}^{1,2}(\mathbb{R}^4)$) we know that $\mathcal{M}$ is a natural constraint, and so $\mathbf{u}$ is a solution of \eqref{limit-system}. Moreover, we can suppose that $\mathbf{u}$ is nonnegative. Note that for every $h$ there exists $i_{h}$ such that $u_{i_{h}}\neq 0$; by the strong maximum principle, we see that $u_{i_{h}}>0$ in $\mathbb{R}^{4}$.

Observe that $\mathbf{u}\in \mathcal{M}$ and $\beta_{ij}\leq 0$ for every $(i,j)\in \mathcal{K}_{2}$, then we get that
\begin{equation*}
0<\|\mathbf{u}_{h}\|_{h}^{2}=\sum_{k=1}^{m}M_B(\mathbf{u})_{hk}\leq {M}_{B}(\mathbf{u})_{hh},
\end{equation*}
where $M_B(\mathbf{u})$ is defined at the end of Subsection \ref{subsec1.4}.

Then we see that $\mathbf{u}_{h}\in \widetilde{\mathcal{M}}_{h}$ for every $h=1,...,m$. Therefore,
\begin{equation}\label{2.6-1}
\frac{1}{4}\|\mathbf{u}_{h}\|_{h}^{2}\geq \inf_{\mathbf{v}\in\widetilde{\mathcal{M}}_{h}}\frac{1}{4}\|\mathbf{v}\|_{h}^{2}=\widetilde{l}_{h}=l_{h}, \text{ and }
E(\mathbf{u})=\sum_{h=1}^{m}\frac{1}{4}\|\mathbf{u}\|_{h}^{2}\geq \sum_{h=1}^{m}l_{h}.
\end{equation}
Combining these with $E(\mathbf{u})=l=\sum_{h=1}^{m}l_{h}$ we have
\begin{equation*}
\frac{1}{4}\|\mathbf{u}_{h}\|_{h}^{2}=l_{h}=\widetilde{l}_{h}.
\end{equation*}
That is, $\mathbf{u}_{h}$  minimizes for $\widetilde{l}_{h}$. By Lemma \ref{lemma:equality_l}, $\mathbf{u}_{h}$ is a minimizer for $l_{h}$ and $\mathbf{u}_{h}\in \mathcal{M}_{h}$. Hence,
\begin{equation*}
\|\mathbf{u}_{h}\|_{h}^{2}=M_{B}(\mathbf{u})_{hh}, ~\forall h=1,\ldots,m,
\end{equation*}
On the other hand, since $\mathbf{u}\in\mathcal{M}$, then we have
\begin{equation*}
\|\mathbf{u}_{h_{1}}\|_{h_{1}}^{2}=\sum_{k=1}^{m}\mathbf{M}_{B}(\mathbf{u})_{h_{1}k}\leq M_{B}(\mathbf{u})_{h_{1}h_{1}}+
\int_{\mathbb{R}^{4}}\beta_{i_{h_{1}}i_{h_{2}}}u^{2}_{i_{h_{1}}}u^{2}_{i_{h_{2}}}<M_{B}(\mathbf{u})_{h_{1}h_{1}},
\end{equation*}
which is a contradiction.
\end{proof}

%%%%%%
%%SECTION 3
%%%%%%%

\section{\bf Energy estimates and existence of Palais-Smale sequences}\label{sec3}
In this section we show crucial energy estimates, as well as other preliminary lemmas. All these results will be used to prove Theorem \ref{Theorem-1} in the next section.

Fix $\mathbf{a}=(a_1,\ldots, a_m)$, an $m$-decomposition of $d$, for some integer $m\in [1,d]$. Observe that this fixes the sets $\mathcal{K}_1$ and $\mathcal{K}_2$.
Throughout this section we always  assume \eqref{eq:coefficients}, \eqref{eq:beta_ij}, and so we omit these conditions in the statements.

To start with, we consider an analogue of the level $c$ where we just consider some components of the $m$ groups in which we divided $\{1,\ldots,d\}$. Given $\Gamma\subseteq \{1,\ldots,m\}$, we define
\begin{equation*}
J_\Gamma(\mathbf{u}):=\frac{1}{2}\sum_{k\in \Gamma}\|\mathbf{u}_{k}\|_{k}^{2}-\frac{1}{4}\sum_{k,l\in \Gamma}\sum_{(i,j)\in I_k\times I_l}\int_{\Omega}\beta_{ij}u_{i}^{2}u_{j}^{2},
\end{equation*}
\begin{equation}\label{eq:def_of_E}
E_\Gamma\left(\mathbf{u}\right):=\frac{1}{2}\sum_{k\in \Gamma}\sum_{i\in I_{k}}\int_{\mathbb{R}^{4}}|\nabla u_{i}|^{2}-\frac{1}{4}\sum_{k,l\in \Gamma}\sum_{(i,j)\in I_{l}\times I_{k}}\int_{\mathbb{R}^{4}}\beta_{ij}u_{j}^{2}u_{i}^{2},
\end{equation}
\begin{align}\label{Manifold-4.1}
\mathcal{N}_{\Gamma}:=
\Bigg\{(\mathbf{u}_h)_{h\in \Gamma}:    \mathbf{u}_{h} \neq \mathbf{0},\ \sum_{i\in I_{h}}
\partial_{i}J_\Gamma(\mathbf{u})u_{i}=0, \text{ for every } h\in \Gamma
\Bigg\},
\end{align}
\begin{equation}\label{Minimizer-4}
c_{\Gamma}=\inf_{\mathbf{u}\in\mathcal{N}_{\Gamma}}J_{\Gamma}(\mathbf{u}).
\end{equation}
Observe that $c=c_{\{1,\ldots,m\}}$.

\subsection{A uniform energy estimate and preliminary results}\label{subsec3.1}
\begin{lemma}\label{Pre-1}
Take
\begin{equation}\label{Pre-2}
\overline{C}=\frac{1}{4}\max_{ h\in \Gamma}\min_{i\in I_{h}}\left\{\frac{1}{\beta_{ii}}\right\}
\inf_{\Omega \supset\Omega_1,\ldots,\Omega_{m}\neq \emptyset \atop \Omega_{i}\cap\Omega_{j}=\emptyset, i\neq j}\sum_{h=1}^{m}\widetilde{S}^{2}(\Omega_{h}),
\end{equation}
where $\widetilde{S}(\Omega)$ is the best Sobolev constant for the embedding $H_{0}^{1}(\Omega)\hookrightarrow L^{4}(\Omega)$, defined by
\begin{equation*}
\widetilde{S}(\Omega):=\inf_{u\in H_{0}^{1}(\Omega)\setminus \{0\}}\frac{\int_{\Omega}|\nabla u|^{2}}{\left(\int_{\Omega}|u|^{4}\right)^{\frac{1}{2}}}.
\end{equation*}
Then
\begin{equation*}
c_\Gamma\leq \overline{C} \qquad \text{ for every } \Gamma\subseteq \{1,\ldots, m\}.
\end{equation*}
\end{lemma}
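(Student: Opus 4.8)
The plan is to construct, for any $\Gamma\subseteq\{1,\ldots,m\}$ and any $\varep>0$, a test vector $\mathbf{u}\in\mathcal{N}_\Gamma$ with $J_\Gamma(\mathbf{u})\leq\overline{C}+\varep$, which gives $c_\Gamma\leq\overline{C}+\varep$ and hence $c_\Gamma\leq\overline{C}$. The idea is to place one bubble-like function in each group, supported on a pairwise disjoint family of subdomains, so that all the cross terms (both within $\mathcal{K}_1$ across groups in $\Gamma$ and across $\mathcal{K}_2$) vanish by disjoint supports, leaving only a sum of scalar Brezis--Nirenberg-type energies, each of which is easy to estimate by $\frac14\min_{i\in I_h}\{1/\beta_{ii}\}\,\widetilde S^2(\Omega_h)$.

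First I would fix pairwise disjoint nonempty open sets $\Omega_h\subset\Omega$, $h\in\Gamma$, essentially realizing the infimum in \eqref{Pre-2} up to $\varep$. For each $h\in\Gamma$ choose the index $i_h\in I_h$ attaining $\min_{i\in I_h}\{1/\beta_{ii}\}$, i.e.\ maximizing $\beta_{ii}$; we will look for a vector $\mathbf{u}$ whose only nonzero component in group $h$ is the $i_h$-th one, call it $w_h\in H^1_0(\Omega_h)\subset H^1_0(\Omega)$. With this choice, for $\mathbf{u}=(\mathbf{u}_h)_{h\in\Gamma}$ we have, since the $w_h$ have disjoint supports, $\int_\Omega w_h^2 w_k^2=0$ for $h\neq k$, so
\[
J_\Gamma(\mathbf{u})=\sum_{h\in\Gamma}\Big(\tfrac12\|w_h\|_{i_h}^2-\tfrac14\beta_{i_h i_h}\int_\Omega w_h^4\Big),
\]
and the Nehari constraint $\sum_{i\in I_h}\partial_i J_\Gamma(\mathbf{u})u_i=0$ reduces to the scalar Nehari identity $\|w_h\|_{i_h}^2=\beta_{i_h i_h}\int_\Omega w_h^4$ for each $h\in\Gamma$. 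So it suffices to choose each $w_h$ on the Nehari manifold of the scalar functional $v\mapsto\frac12\|v\|_{i_h}^2-\frac14\beta_{i_h i_h}\int v^4$ in $H^1_0(\Omega_h)$, and then $J_\Gamma(\mathbf{u})=\sum_{h\in\Gamma}\frac14\|w_h\|_{i_h}^2$. Standard Brezis--Nirenberg theory (recall $-\lambda_1(\Omega)<\lambda_i<0$, so the linear part picks up a genuinely subcritical perturbation and the mountain-pass level lies strictly below the threshold) shows the scalar infimum over the Nehari manifold is achieved and equals, after optimizing the scaling, $\frac14\cdot\frac{1}{\beta_{i_h i_h}}\,\big(\inf_{v\in H^1_0(\Omega_h)\setminus\{0\}}\|v\|_{i_h}^2/|v|_4^2\big)^2\le \frac14\cdot\frac{1}{\beta_{i_h i_h}}\,\widetilde S^2(\Omega_h)$, using $\|v\|_{i_h}^2\le\int|\nabla v|^2$ from \eqref{Constant-1}. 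Picking $w_h$ close to this minimizer, summing over $h\in\Gamma$ and using $\min_{i\in I_h}\{1/\beta_{ii}\}\le\max_{h'\in\Gamma}\min_{i\in I_{h'}}\{1/\beta_{ii}\}$ gives
\[
J_\Gamma(\mathbf{u})\le \sum_{h\in\Gamma}\tfrac14\,\tfrac{1}{\beta_{i_h i_h}}\,\widetilde S^2(\Omega_h)+\varep\le \tfrac14\max_{h\in\Gamma}\min_{i\in I_h}\Big\{\tfrac{1}{\beta_{ii}}\Big\}\sum_{h\in\Gamma}\widetilde S^2(\Omega_h)+\varep,
\]
and then bounding $\sum_{h\in\Gamma}\widetilde S^2(\Omega_h)$ by the infimum over partitions into $m$ pieces (just take $\Omega_h$ arbitrary tiny disjoint balls for the indices in $\{1,\ldots,m\}\setminus\Gamma$, or observe $|\Gamma|\le m$ and the infimum defining $\overline C$ is over $m$-fold partitions) yields $J_\Gamma(\mathbf{u})\le\overline C+2\varep$. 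Letting $\varep\to0$ finishes the proof.

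The main technical point to be careful about is matching the constant exactly: the infimum in \eqref{Pre-2} runs over $m$-fold disjoint partitions of $\Omega$, while for a proper subset $\Gamma$ we only use $|\Gamma|<m$ of the pieces, so I must argue that discarding some pieces (or adding negligible ones) does not increase the sum $\sum_h\widetilde S^2(\Omega_h)$ — this is immediate since $\widetilde S^2(\Omega')\to$ a fixed positive multiple is bounded and one can always shrink the unused domains, or simply note the infimum over $m$-partitions is $\le$ the value using any given $|\Gamma|$-partition extended arbitrarily. The other mild subtlety is justifying that the scalar problem on each $\Omega_h$ has Nehari infimum $\le\frac14\beta_{i_hi_h}^{-1}\widetilde S^2(\Omega_h)$: this follows from the explicit scaling computation on the Nehari manifold $t\mapsto$ the map $\max_{t>0}$, exactly as in \eqref{Vector Sobolev Inequality-1} but in the scalar one-domain case, combined with $\|v\|_{i_h}^2\le|\nabla v|_2^2$. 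No actual Brezis--Nirenberg attainment is needed — only the upper bound on the infimum — so the argument is entirely soft.
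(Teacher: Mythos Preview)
Your proposal is correct and follows essentially the same approach as the paper: pick one index $i_h$ per group maximizing $\beta_{ii}$, place disjointly supported test functions, scale onto the scalar Nehari manifold, and bound each piece by $\tfrac{1}{4\beta_{i_h i_h}}\widetilde S^2(\Omega_h)$ using $\|v\|_{i_h}^2\le\int|\nabla v|^2$. The paper is slightly more direct in that it writes down the explicit scaling factor $t_h=\|\widehat u_{i_h}\|_{i_h}/(\sqrt{\beta_{i_h i_h}}\,|\widehat u_{i_h}|_4^2)$ rather than appealing to scalar Nehari theory, and your final self-correction (``no actual Brezis--Nirenberg attainment is needed'') is exactly right.

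One small comment: your discussion of passing from a $|\Gamma|$-partition to an $m$-partition is a bit tangled (adding ``tiny balls'' would \emph{increase} the sum, not decrease it), but your initial setup already handles this correctly: if you start from an $m$-partition nearly realizing the infimum in the definition of $\overline C$ and simply use only $|\Gamma|$ of its pieces, then $\sum_{h\in\Gamma}\widetilde S^2(\Omega_h)\le\sum_{h=1}^m\widetilde S^2(\Omega_h)$ since all terms are positive. This is exactly the inequality the paper uses.
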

\begin{proof}[\bf{Proof}]
The following proof is inspired by that of Lemma 2.1 in \cite{Tavares 2016-1}.
For each $h\in \Gamma$, set $i_{h}$ the index attaining $\min_{i\in I_{h}}\{\frac{1}{\beta_{ii}}\}$. Take $\widehat{u}_{i_{1}},\ldots, \widehat{u}_{i_{|\Gamma|}}\not\equiv 0$ such that $\widehat{u}_{i_{h}}\cdot\widehat{u}_{i_{k}}\equiv 0$ whenever $h\neq k$. Denote $\widetilde{\mathbf{u}}$ by $\widetilde{u}_{i_{h}}=t_{h}\widehat{u}_{i_{h}}$, where $t_{h}=\|\widehat{u}_{i_{h}}\|_{i_{h}}/(\sqrt{\beta_{i_{h}i_{h}}}|\widehat{u}_{i_{h}}|^{2}_{L^{4}})$ for $h\in \Gamma$, and $\widetilde{u}_{i}=0$ for $i\neq i_{1},\ldots, i_{|\Gamma|}$. It is easy to see that $\widetilde{\mathbf{u}}_{h}\not \equiv 0$ for $h\in \Gamma$, and $\widetilde{\mathbf{u}}\in \mathcal{N}_\Gamma$. Thus, since $\lambda_i<0$, we infer that
\begin{align*}
 c_\Gamma & \leq J_\Gamma(\widetilde{\mathbf{u}})=\frac{1}{4}\sum_{h\in \Gamma}\|\widetilde{\mathbf{u}}_{h}\|^{2}_{h}=\frac{1}{4} \sum_{h\in \Gamma}t_{h}^{2}\|\widehat{u}_{i_{h}}\|_{i_{h}}^{2}\\
 & \leq\frac{1}{4}\sum_{h\in \Gamma}\frac{1}{\beta_{i_{h}i_{h}}}\frac{\|\widehat{u}_{i_{h}}\|_{H^{1}_0}^{4}}
{|\widehat{u}_{i_{h}}|^{4}_{L^{4}}}
\leq\frac{1}{4}\max_{ h\in \Gamma}\left\{\frac{1}{\beta_{i_{h}i_{h}}}\right\}\sum_{h\in \Gamma}\frac{\|\widehat{u}_{i_{h}}\|_{H^{1}_0}^{4}}
{|\widehat{u}_{i_{h}}|^{4}_{L^{4}}},
\end{align*}
where
\begin{equation*}
\|u\|_{H^{1}_0}^{2}:=\int_{\Omega}|\nabla u|^{2}.
\end{equation*}
Therefore
\begin{equation}\label{Constant-6-2}
c_\Gamma\leq \frac{1}{4}\max_{ h\in \Gamma}\min_{i\in I_{h}}\left\{\frac{1}{\beta_{ii}}\right\}
\inf_{\Omega \supset\Omega_1,\ldots,\Omega_{|\Gamma|}\neq \emptyset \atop \Omega_{i}\cap\Omega_{j}=\emptyset, i\neq j}\sum_{h=1}^{|\Gamma|}\widetilde{S}^{2}(\Omega_{h})\leq \frac{1}{4}\max_{ h\in \Gamma}\min_{i\in I_{h}}\left\{\frac{1}{\beta_{ii}}\right\}
\inf_{\Omega \supset\Omega_1,\ldots,\Omega_{m}\neq \emptyset \atop \Omega_{i}\cap\Omega_{j}=\emptyset, i\neq j}\sum_{h=1}^{m}\widetilde{S}^{2}(\Omega_{h}),
\end{equation}
which yields that $c_\Gamma\leq \overline{C}$, where $\overline{C}$ is defined in \eqref{Pre-2}.
\end{proof}
\begin{remark}\label{3-1}
In \cite[Lemma 2.1]{Tavares 2016-1} a related estimate was proved for the level $c$. Here, however, we will need estimates between the different levels $c_\Gamma$ (see Theorem \eqref{Energy Estimation-1,2,m-1}), and for that the uniform estimate independent of $\Gamma$ we have just obtained is crucial.
\end{remark}

Denote
\begin{equation}\label{Constant-6}
\Lambda_{1}:=S^{2}/(32\overline{C}),
\end{equation}
where $S$ is defined in  \eqref{eq:def_of_S}. From Lemma \ref{Pre-1} and  \cite[Lemma 2.2]{Tavares 2016-1} we have the following.
\begin{lemma}\label{Positive Definite}
Let $\Gamma\subseteq \{1,\ldots,m\}$. If
\begin{equation*}
\beta_{ij}\geq 0  \quad\forall (i,j)\in \mathcal{K}_{1} \quad and \quad
-\infty<\beta_{ij}< \Lambda_{1}  \quad\forall (i,j)\in \mathcal{K}_{2},
\end{equation*}
then there holds
\begin{equation*}
\mathcal{N}_\Gamma\cap \left\{(\mathbf{u}_h)_{h\in \Gamma}: \sum_{h\in \Gamma}\|\mathbf{u}_h\|_{h}^{2}\leq 8\overline{C}\right\}\subset \mathcal{E}_\Gamma,
\end{equation*}
where $\mathcal{E}_\Gamma$ is defined in \eqref{Diagonally Dominant}.
\end{lemma}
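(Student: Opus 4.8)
The strategy is to take an arbitrary $\mathbf{u}=(\mathbf{u}_h)_{h\in\Gamma}\in\mathcal{N}_\Gamma$ with $\sum_{h\in\Gamma}\|\mathbf{u}_h\|_h^2\leq 8\overline C$ and verify directly the strictly-diagonally-dominant inequality in the definition \eqref{Diagonally Dominant} of $\mathcal{E}_\Gamma$; that is, for each fixed $h\in\Gamma$ we must show
\begin{equation*}
\left|\sum_{(i,j)\in I_h^2}\int_\Omega \beta_{ij}u_i^2u_j^2\right| > \sum_{k\in\Gamma,\,k\neq h}\left|\sum_{(i,j)\in I_h\times I_k}\int_\Omega \beta_{ij}u_i^2u_j^2\right|.
\end{equation*}
First I would bound the right-hand side from above. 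Using $|\beta_{ij}|<\Lambda_1$ for $(i,j)\in\mathcal{K}_2$, Hölder's inequality $\int_\Omega u_i^2u_j^2\leq |u_i|_4^2|u_j|_4^2$, and the Sobolev bound $|u_i|_4^2\leq \|u_i\|_i^2/S$ from \eqref{Constant-1}, the off-diagonal terms are controlled by $\Lambda_1 S^{-2}\sum_{k\neq h}\big(\sum_{i\in I_h}\|u_i\|_i^2\big)\big(\sum_{j\in I_k}\|u_j\|_j^2\big)\leq \Lambda_1 S^{-2}\|\mathbf{u}_h\|_h^2\big(\sum_{k\neq h}\|\mathbf{u}_k\|_k^2\big)\leq \Lambda_1 S^{-2}\cdot 8\overline C\cdot\|\mathbf{u}_h\|_h^2$. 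By the choice $\Lambda_1=S^2/(32\overline C)$ in \eqref{Constant-6}, this is at most $\tfrac14\|\mathbf{u}_h\|_h^2$.

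Next I would bound the diagonal term from below. Since $\beta_{ij}\geq 0$ for $(i,j)\in\mathcal{K}_1$ and $\beta_{ii}>0$, the quantity $\sum_{(i,j)\in I_h^2}\int_\Omega\beta_{ij}u_i^2u_j^2$ is nonnegative, so its absolute value equals itself. The Nehari constraint $\sum_{i\in I_h}\partial_i J_\Gamma(\mathbf{u})u_i=0$ reads
\begin{equation*}
\|\mathbf{u}_h\|_h^2=\sum_{(i,j)\in I_h^2}\int_\Omega\beta_{ij}u_i^2u_j^2+\sum_{k\in\Gamma,\,k\neq h}\sum_{(i,j)\in I_h\times I_k}\int_\Omega\beta_{ij}u_i^2u_j^2,
\end{equation*}
so the diagonal term equals $\|\mathbf{u}_h\|_h^2$ minus the off-diagonal sum; combining with the previous bound, it is at least $\|\mathbf{u}_h\|_h^2-\tfrac14\|\mathbf{u}_h\|_h^2=\tfrac34\|\mathbf{u}_h\|_h^2$. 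Since $\mathbf{u}_h\neq\mathbf{0}$ on $\mathcal{N}_\Gamma$, we have $\|\mathbf{u}_h\|_h^2>0$, and hence the diagonal term $\geq\tfrac34\|\mathbf{u}_h\|_h^2 > \tfrac14\|\mathbf{u}_h\|_h^2\geq$ the off-diagonal sum, which is exactly strict diagonal dominance. This is essentially the argument of \cite[Lemma 2.2]{Tavares 2016-1} transported to the present notation.

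The only genuinely delicate point is bookkeeping: one must be careful that $\Lambda_1$ in \eqref{Constant-6} is defined using $\overline C$ from Lemma \ref{Pre-1}, which involves all $m$ groups, not just those in $\Gamma$ — but this only makes $\overline C$ larger and the estimate easier, so there is no obstruction; indeed the uniform (in $\Gamma$) nature of $\overline C$ noted in Remark \ref{3-1} is precisely what lets the same $\Lambda_1$ work simultaneously for every $\Gamma$. A secondary subtlety is ensuring the bound $\sum_{h\in\Gamma}\|\mathbf{u}_h\|_h^2\leq 8\overline C$ is used correctly when isolating a single index $h$: one splits $8\overline C\geq \|\mathbf{u}_h\|_h^2+\sum_{k\neq h}\|\mathbf{u}_k\|_k^2$ and uses $\sum_{k\neq h}\|\mathbf{u}_k\|_k^2\leq 8\overline C$ directly, which is what the computation above does. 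Everything else is a routine chain of Hölder and Sobolev inequalities.
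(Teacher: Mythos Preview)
There is a genuine gap in your first step. The hypothesis is $-\infty<\beta_{ij}<\Lambda_1$ for $(i,j)\in\mathcal{K}_2$, \emph{not} $|\beta_{ij}|<\Lambda_1$; the coupling parameters between different groups are allowed to be arbitrarily negative. Consequently the off-diagonal quantities
\[
A_k:=\sum_{(i,j)\in I_h\times I_k}\int_\Omega \beta_{ij}u_i^2u_j^2
\]
can be very negative, and there is no a priori bound on $\sum_{k\neq h}|A_k|$ in terms of $\Lambda_1$. Your claim that this sum is at most $\tfrac14\|\mathbf{u}_h\|_h^2$ therefore fails, and both the lower bound on the diagonal term and the final comparison collapse with it.

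The correct argument, which is the one in \cite[Lemma~2.2]{Tavares 2016-1} that the paper cites (and which the paper reproduces later in the proof of \eqref{3-p-7}), avoids bounding the two sides separately. One first uses the Nehari identity to write the diagonal term as $\|\mathbf{u}_h\|_h^2-\sum_{k\neq h}A_k$, and then computes the \emph{difference} directly:
\[
\text{(diag)}-\sum_{k\neq h}|A_k|=\|\mathbf{u}_h\|_h^2-\sum_{k\neq h}\bigl(A_k+|A_k|\bigr)=\|\mathbf{u}_h\|_h^2-2\sum_{k\neq h}A_k^+,
\]
where $A_k^+=\max(A_k,0)$. Now only the \emph{positive parts} need to be controlled, and for these the one-sided bound $\beta_{ij}<\Lambda_1$ suffices: $A_k^+\leq \Lambda_1\sum_{(i,j)\in I_h\times I_k}\int_\Omega u_i^2u_j^2$. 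Your H\"older--Sobolev estimate then gives $2\sum_{k\neq h}A_k^+\leq \tfrac{16\Lambda_1\overline C}{S^2}\|\mathbf{u}_h\|_h^2=\tfrac12\|\mathbf{u}_h\|_h^2$, whence the difference is $\geq\tfrac12\|\mathbf{u}_h\|_h^2>0$. The rest of your write-up (the remarks on the uniformity of $\overline C$ in $\Gamma$, etc.) is fine.
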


Repeating the proof of Proposition 1.2 in \cite{Soave 2015}, we have the following.
\begin{lemma}\label{Lagrange}
Let $\Gamma\subseteq \{1,\ldots, m\}$ and assume that $c_\Gamma$ is achieved by $\mathbf{u}\in \mathcal{N}_\Gamma\cap \mathcal{E}_\Gamma$. Then $\mathbf{u}$ is a critical point of $J_\Gamma$ with at least $|\Gamma|$ nontrivial components.
\end{lemma}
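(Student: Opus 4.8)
\textbf{Proof proposal for Lemma \ref{Lagrange}.}

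The plan is to follow the standard argument showing that $\mathcal{N}_\Gamma$ is a natural constraint, as in \cite[Proposition 1.2]{Soave 2015}. Suppose $\mathbf{u}\in\mathcal{N}_\Gamma\cap\mathcal{E}_\Gamma$ achieves $c_\Gamma$. Since $\mathbf{u}\in\mathcal{N}_\Gamma$ there are $|\Gamma|$ constraints $G_h(\mathbf{u}):=\sum_{i\in I_h}\partial_i J_\Gamma(\mathbf{u})u_i=0$, and by the Lagrange multiplier rule there exist scalars $(\mu_h)_{h\in\Gamma}$ with $J_\Gamma'(\mathbf{u})=\sum_{h\in\Gamma}\mu_h G_h'(\mathbf{u})$. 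The goal is to prove $\mu_h=0$ for every $h\in\Gamma$, which then forces $J_\Gamma'(\mathbf{u})=0$; the statement about $|\Gamma|$ nontrivial components is immediate from the definition of $\mathcal{N}_\Gamma$, which requires $\mathbf{u}_h\neq\mathbf{0}$ for all $h\in\Gamma$.

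First I would test the Lagrange identity against the "block" directions: apply $J_\Gamma'(\mathbf{u})$ to the vector whose $k$-th block equals $\mathbf{u}_k$ and whose other blocks are zero, for each $k\in\Gamma$. On the left-hand side one gets $G_k(\mathbf{u})=0$ because $\mathbf{u}\in\mathcal{N}_\Gamma$. On the right-hand side one computes $G_h'(\mathbf{u})[(\mathbf{0},\ldots,\mathbf{u}_k,\ldots,\mathbf{0})]$; a direct computation (differentiating the quartic terms $\int\beta_{ij}u_i^2u_j^2$) shows that this quantity, as an $|\Gamma|\times|\Gamma|$ matrix in $(h,k)$, is (up to a fixed nonzero multiplicative constant, $-4$) exactly the matrix $M_B^\Gamma(\mathbf{u})$ defined in Subsection \ref{subsec1.4} — here one uses that on $\mathcal{N}_\Gamma$ the quadratic part of each $G_k$ vanishes against $\mathbf{u}_k$, so only the quartic contributions survive. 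Hence the system reduces to $M_B^\Gamma(\mathbf{u})^{T}\boldsymbol{\mu}=\mathbf{0}$ (or its transpose), where $\boldsymbol{\mu}=(\mu_h)_{h\in\Gamma}$.

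Now the hypothesis $\mathbf{u}\in\mathcal{E}_\Gamma$ enters decisively: by \eqref{Diagonally Dominant} the matrix $M_B^\Gamma(\mathbf{u})$ is strictly diagonally dominant, and since $\beta_{ij}\geq0$ for $(i,j)\in\mathcal{K}_1$ its diagonal entries $\sum_{(i,j)\in I_h^2}\int\beta_{ij}u_i^2u_j^2$ are strictly positive (they are strictly positive because $\mathbf{u}_h\neq\mathbf{0}$ and $\beta_{ii}>0$). A strictly diagonally dominant matrix is invertible (in fact positive definite, as recalled after \eqref{Diagonally Dominant}), so $M_B^\Gamma(\mathbf{u})$ is nonsingular and the only solution of the linear system is $\boldsymbol{\mu}=\mathbf{0}$. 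Therefore $J_\Gamma'(\mathbf{u})=0$, i.e.\ $\mathbf{u}$ is a critical point of $J_\Gamma$ with $|\Gamma|$ nontrivial components, as claimed.

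The main obstacle is the bookkeeping in the second step: one must differentiate $G_h$ carefully, keep track of which terms survive on $\mathcal{N}_\Gamma$ versus on $\mathcal{E}_\Gamma$, and correctly identify the resulting coefficient matrix with $M_B^\Gamma(\mathbf{u})$ (including getting the transpose/symmetry right, which is harmless here since $\beta_{ij}=\beta_{ji}$ makes $M_B^\Gamma(\mathbf{u})$ symmetric). Once that algebraic identification is in place, invertibility via strict diagonal dominance is the quick and clean part. Since this is entirely parallel to \cite[Proposition 1.2]{Soave 2015}, I would simply cite that computation and emphasize only the point where $\mathcal{E}_\Gamma$ guarantees non-degeneracy of $M_B^\Gamma(\mathbf{u})$.
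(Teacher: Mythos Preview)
Your proposal is correct and follows essentially the same approach as the paper, which simply cites \cite[Proposition 1.2]{Soave 2015} for the identical argument. One minor bookkeeping point: the multiplicative constant relating $(G_h'(\mathbf{u})[\mathbf{0},\ldots,\mathbf{u}_k,\ldots,\mathbf{0}])_{h,k}$ to $M_B^\Gamma(\mathbf{u})$ is $-2$ rather than $-4$ (use $G_h(\mathbf{u})=0$ to simplify the diagonal entry), and this same computation also verifies that the gradients $G_h'(\mathbf{u})$ are linearly independent, justifying the application of the Lagrange multiplier rule in the first place.
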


We finish this preliminary subsection with a lower and upper uniform estimate on the $L^4$-norms of elements in the Nehari set which are below a certain energy level.
\begin{lemma}\label{Bounded}
Let $\Gamma\subseteq \{1,\ldots, m\}$. If
\begin{equation*}
\beta_{ij}\geq 0  \quad\forall (i,j)\in \mathcal{K}_{1} \quad and \quad
-\infty<\beta_{ij}< \Lambda_{1}  \quad\forall (i,j)\in \mathcal{K}_{2},
\end{equation*}
then there exists $C_{2}>C_{1}>0$, such that for any $\mathbf{u}\in \mathcal{N}_\Gamma$ with $J_\Gamma(\mathbf{u})\leq 2\overline{C}$,
there holds
\begin{equation*}
C_{1}<\sum_{i\in I_{h}}|u_{i}|_{4}^{2}<C_{2}, ~h\in \Gamma,
\end{equation*}
where $C_{1},C_2$ are dependent only on $\beta_{ij}\geq 0$ for $(i,j)\in \mathcal{K}_{1}$, $\beta_{ii}, \lambda_{i}, i=1,\ldots, d$.
\end{lemma}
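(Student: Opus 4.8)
The plan is to exploit the Nehari constraint together with the uniform energy bound in the same spirit as the analogous subcritical estimate in \cite{Tavares 2016-1}, but taking care of the presence of the critical exponent. First I would record the two basic identities available for $\mathbf{u}\in\mathcal{N}_\Gamma$ with $J_\Gamma(\mathbf{u})\le 2\overline C$: on one hand, since all the interaction terms cancel, $J_\Gamma(\mathbf{u})=\tfrac14\sum_{h\in\Gamma}\|\mathbf{u}_h\|_h^2\le 2\overline C$, so that $\sum_{h\in\Gamma}\|\mathbf{u}_h\|_h^2\le 8\overline C$; on the other hand, the Nehari equations read $\|\mathbf{u}_h\|_h^2=\sum_{k\in\Gamma}M_B^\Gamma(\mathbf{u})_{hk}$ for each $h\in\Gamma$. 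Because the energy bound places $\mathbf{u}$ in the region where Lemma \ref{Positive Definite} applies, we get $\mathbf{u}\in\mathcal{E}_\Gamma$, hence $M_B^\Gamma(\mathbf{u})$ is strictly diagonally dominant with positive diagonal, and in particular $\|\mathbf{u}_h\|_h^2>0$ gives $\mathbf{u}_h\ne\mathbf{0}$ with a genuinely positive diagonal entry $M_B^\Gamma(\mathbf{u})_{hh}=\sum_{(i,j)\in I_h^2}\beta_{ij}\int_\Omega u_i^2u_j^2>0$.

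For the \emph{upper} bound, diagonal dominance gives $\|\mathbf{u}_h\|_h^2\le 2M_B^\Gamma(\mathbf{u})_{hh}$, so summing over $h$ and using $\beta_{ij}\ge0$ on $\mathcal K_1$ together with $\int_\Omega u_i^2u_j^2\le |u_i|_4^2|u_j|_4^2$ and $|u_i|_4^2\le S^{-1}\|u_i\|_i^2$ (from \eqref{Constant-1}), I would bound $M_B^\Gamma(\mathbf{u})_{hh}\le \bigl(\max_{(i,j)\in I_h^2}\beta_{ij}\bigr)\bigl(\sum_{i\in I_h}|u_i|_4^2\bigr)^2$, while the remaining off-diagonal terms are controlled by the same quantity times a constant depending on the $\beta_{ij}$ with $(i,j)\in\mathcal K_2$ (here one uses $|\beta_{ij}|<\Lambda_1$ on $\mathcal K_2$, or simply that these are finitely many fixed numbers). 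Combined with $\sum_{h\in\Gamma}\|\mathbf{u}_h\|_h^2\le 8\overline C$, this forces $\sum_{i\in I_h}|u_i|_4^2$ to be bounded above by a constant $C_2$ depending only on the listed data.

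For the \emph{lower} bound, again start from the Nehari identity $\|\mathbf{u}_h\|_h^2=\sum_{k\in\Gamma}M_B^\Gamma(\mathbf{u})_{hk}$ and estimate the right-hand side from above by a constant times $\bigl(\sum_{i\in I_h}|u_i|_4^2\bigr)\cdot\bigl(\sum_{k\in\Gamma}\sum_{i\in I_k}|u_i|_4^2\bigr)$, using $\beta_{ij}\le$ (a fixed constant) on $\mathcal K_1\cup\mathcal K_2$; since we already know $\sum_{k}\sum_{i\in I_k}|u_i|_4^2\le |\Gamma|C_2$, this gives $\|\mathbf{u}_h\|_h^2\le C\,\sum_{i\in I_h}|u_i|_4^2$ with $C$ depending only on the admissible data. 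On the other hand $\mathbf{u}_h\ne\mathbf{0}$, and I want a quantitative lower bound on $\|\mathbf{u}_h\|_h^2$; this comes from the critical Sobolev-type inequality: from the Nehari equation and $\int_\Omega u_i^2u_j^2\le S^{-2}\|u_i\|_i^2\|u_j\|_i^2$ one deduces $\|\mathbf{u}_h\|_h^2\le C' \|\mathbf{u}_h\|_h^2 \sum_{k\in\Gamma}\|\mathbf{u}_k\|_k^2 \le C'\|\mathbf{u}_h\|_h^2\cdot 8\overline C$ is the wrong direction, so instead I use that on each group the Nehari identity forces $\|\mathbf u_h\|_h^4 \ge c\, M_B^\Gamma(\mathbf u)_{hh}\|\mathbf u_h\|_h^2$-type bounds; more cleanly, applying \eqref{Constant-1} to the diagonal term and diagonal dominance yields $\|\mathbf{u}_h\|_h^2\le 2M_B^\Gamma(\mathbf u)_{hh}\le 2\bigl(\max\beta_{ij}\bigr)S^{-2}\|\mathbf u_h\|_h^4$, hence $\|\mathbf u_h\|_h^2\ge S^2/\bigl(2\max\beta_{ij}\bigr)=:\delta>0$. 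Putting this together with $\|\mathbf{u}_h\|_h^2\le C\sum_{i\in I_h}|u_i|_4^2$ gives $\sum_{i\in I_h}|u_i|_4^2\ge \delta/C=:C_1>0$, with everything depending only on $\beta_{ij}$ for $(i,j)\in\mathcal K_1$, the $\beta_{ii}$ and the $\lambda_i$.

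The main obstacle, as usual in the critical case, is the lower bound: one cannot invoke compactness of $H^1_0(\Omega)\hookrightarrow L^4(\Omega)$ as in the subcritical setting, so the nontriviality of each $\mathbf{u}_h$ must be converted into a quantitative estimate purely through the Nehari equation and the Sobolev inequality \eqref{Constant-1}, as indicated above; one must also be careful that the strict diagonal dominance from Lemma \ref{Positive Definite} is genuinely used to pass from the signed Nehari identity to the two-sided control of $M_B^\Gamma(\mathbf u)_{hh}$ by $\|\mathbf u_h\|_h^2$, and that the off-diagonal contributions (which may be negative) are absorbed without spoiling the constants' dependence only on the data claimed in the statement.
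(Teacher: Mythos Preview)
Your lower bound argument is essentially right and matches what the paper does (it defers to \cite[Lemma~2.4]{Tavares 2016-1}, which is exactly the diagonal-dominance-plus-Sobolev mechanism you describe): from $\mathbf{u}\in\mathcal{E}_\Gamma$ one has $\|\mathbf{u}_h\|_h^2<2M_B^\Gamma(\mathbf{u})_{hh}$, and combining this with $S\sum_{i\in I_h}|u_i|_4^2\le \|\mathbf{u}_h\|_h^2$ and $M_B^\Gamma(\mathbf{u})_{hh}\le (\max_{(i,j)\in I_h^2}\beta_{ij})(\sum_{i\in I_h}|u_i|_4^2)^2$ directly gives $\sum_{i\in I_h}|u_i|_4^2>S/(2\max_{(i,j)\in I_h^2}\beta_{ij})$. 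Your detour through an auxiliary bound $\|\mathbf{u}_h\|_h^2\le C\sum_{i\in I_h}|u_i|_4^2$ is unnecessary, but not wrong.

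Your \emph{upper} bound argument, however, does not close as written. You invoke diagonal dominance to get $\|\mathbf{u}_h\|_h^2\le 2M_B^\Gamma(\mathbf{u})_{hh}$ and then bound $M_B^\Gamma(\mathbf{u})_{hh}\le(\max\beta_{ij})(\sum_{i\in I_h}|u_i|_4^2)^2$; chaining these and invoking $\sum_h\|\mathbf{u}_h\|_h^2\le 8\overline{C}$ gives an upper bound on $\|\mathbf{u}_h\|_h^2$, not on $\sum_{i\in I_h}|u_i|_4^2$. The inequalities point the wrong way for the conclusion you want. The paper's argument is a one-liner that you in fact already recorded at the start: from $J_\Gamma(\mathbf{u})=\tfrac14\sum_{h\in\Gamma}\|\mathbf{u}_h\|_h^2\le 2\overline{C}$ and \eqref{Constant-1},
\[
S\sum_{i\in I_h}|u_i|_4^2\le \sum_{i\in I_h}\|u_i\|_i^2\le \sum_{k\in\Gamma}\|\mathbf{u}_k\|_k^2\le 8\overline{C},
\]
so $C_2:=8\overline{C}/S$ works immediately. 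No diagonal dominance or Nehari identity is needed for this direction.
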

\begin{proof}[\bf{Proof}]
For any $\mathbf{u}\in \mathcal{N}_\Gamma$ with $J_\Gamma(\mathbf{u})\leq 2\overline{C}$ and for any $h\in \Gamma$, we have
\begin{equation*}
S\sum_{i\in I_{h}}|u_{i}|_{4}^{2}\leq\sum_{i\in I_{h}}\|u_{i}\|_{i}^{2} \leq\sum_{h\in \Gamma}\|\mathbf{u}_{h}\|_{h}^{2}
\leq 8\overline{C},
\end{equation*}
which yields that $\sum_{i\in I_{h}}|u_{i}|_{4}^{2}<C_{2}$. Similarly to Lemma 2.4 in \cite{Tavares 2016-1} we get that $\sum_{i\in I_{h}}|u_{i}|_{4}^{2}> C_{1}$.
\end{proof}

\subsection{Comparing least energy levels}\label{subsec3.2}
In this subsection we present crucial estimates both for $c$ and for all levels $c_\Gamma$, so that we can obtain in the next section that these levels are achieved. The following theorem plays a critical role in presenting that limits of minimizing sequences are nontrivial. The presence of a $\delta$ allows to consider also positive $\beta_{ij}\in \mathcal{K}_2$ in Theorem \ref{Theorem-1}.

\begin{thm}\label{Energy Estimates}
Assume that $\beta_{ij}\geq 0$ for $(i,j)\in \mathcal{K}_{1}$. There exists $\delta>0$, depending only on $\beta_{ij}\geq 0$ for $(i,j)\in \mathcal{K}_{1}$ and $\beta_{ii}, \lambda_{i} ~i=1,\ldots,d$, such that
\begin{equation*}
c_\Gamma<\sum_{h\in \Gamma}l_{h}-\delta \qquad \forall \Gamma\subseteq \{1,\ldots, m\},
\end{equation*}
where $c_\Gamma$ is defined in \eqref{Minimizer-4} and $l_{h}$ is defined in \eqref{eq:Ground State-2}. In particular,
\[
c<\sum_{h=1}^m l_h -\delta.
\]
\end{thm}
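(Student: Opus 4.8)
The plan is to bound $c_\Gamma$ from above by $\sum_{h\in\Gamma}l_h$ minus a fixed positive quantity, by building a test configuration from the ground states $\mathbf{V}_h$ of the sub-systems placed far apart, and exploiting the strict subcriticality produced by the lower-order terms $\lambda_i u_i^2$ with $\lambda_i<0$. Fix $\Gamma\subseteq\{1,\ldots,m\}$. For each $h\in\Gamma$ let $\mathbf{V}_h=(V_i^h)_{i\in I_h}$ be the nonnegative minimizer for $l_h$ given by Theorem \ref{limit system-6-1}, so each $V_i^h$ is a multiple of a bubble $U_{\epsilon,y}$. Pick pairwise distinct directions $e_h\in\mathbb{S}^3$, a large parameter $R$, and a small scaling parameter $\epsilon$; after translating and rescaling, consider the functions $V_{i,\epsilon,R}^h(x):=\epsilon^{-1}V_i^h\big((x-Re_h)/\epsilon\big)$, cut off by a fixed smooth function supported in $\Omega$ near the (distinct, interior) concentration points $x_h$ — here we use that $\Omega$ is a bounded smooth domain and that we can fit $m$ disjoint balls inside it. Let $\mathbf{w}^\Gamma$ be the vector whose $I_h$-block is $(V_{i,\epsilon,R}^h)_{i\in I_h}$ for $h\in\Gamma$ and zero otherwise.

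The key estimates are the standard Brezis--Nirenberg-type bubble expansions, now for systems. For each $h$, using the cut-off bubbles, one has $\sum_{i\in I_h}\int_\Omega |\nabla V_{i,\epsilon,R}^h|^2 = \|\mathbf{V}_h\|_{\mathbb{D}_h}^2 + O(\epsilon^2)$, while $\sum_{(i,j)\in I_h^2}\int_\Omega \beta_{ij}(V_{i,\epsilon,R}^h)^2(V_{j,\epsilon,R}^h)^2 = \sum_{(i,j)\in I_h^2}\beta_{ij}\int_{\R^4}(V_i^h)^2(V_j^h)^2 + O(\epsilon^2)$, and crucially the negative lower-order term contributes $\sum_{i\in I_h}\lambda_i\int_\Omega (V_{i,\epsilon,R}^h)^2 = -c_h\,\epsilon^2|\ln\epsilon| + o(\epsilon^2|\ln\epsilon|)$ with $c_h>0$, since $\lambda_i<0$ and the $L^2$-mass of a four-dimensional bubble diverges logarithmically. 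The inter-group interaction terms for $h\neq k$ in $\Gamma$ are controlled by Lemma \ref{Decay 2.5} (products of far-apart bubbles), so they are $o(1)$ as $R\to\infty$, and since $\beta_{ij}<\Lambda_1$ on $\mathcal{K}_2$ these cross terms can be absorbed. Now project $\mathbf{w}^\Gamma$ onto $\mathcal{N}_\Gamma$: there exist $t_h>0$ (one per group) with $(t_h\mathbf{w}_h^\Gamma)_{h\in\Gamma}\in\mathcal{N}_\Gamma$, and the standard argument (first fixing $R$ large so cross terms are negligible, then $\epsilon$ small) shows $t_h\to 1$. Evaluating $J_\Gamma$ at this projection and using that on $\mathcal{N}_\Gamma$ one has $J_\Gamma(\mathbf{u})=\frac14\sum_{h\in\Gamma}\|\mathbf{u}_h\|_h^2$ together with the characterization $l_h=\tfrac14\|\mathbf{V}_h\|_{\mathbb{D}_h}^2$ (Theorem \ref{limit system-6-1}, Lemma \ref{lemma:equality_l}), we obtain
\begin{equation*}
c_\Gamma \le J_\Gamma\big((t_h\mathbf{w}_h^\Gamma)_{h\in\Gamma}\big) = \sum_{h\in\Gamma} l_h - \tfrac14\Big(\sum_{h\in\Gamma} c_h\Big)\epsilon^2|\ln\epsilon| + o(\epsilon^2|\ln\epsilon|) + o_R(1),
\end{equation*}
so for $R$ large and $\epsilon$ small we get $c_\Gamma < \sum_{h\in\Gamma} l_h - \delta_\Gamma$ for some $\delta_\Gamma>0$. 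Finally set $\delta:=\min_{\Gamma\subseteq\{1,\ldots,m\}}\delta_\Gamma>0$, which depends only on the listed quantities (the bubbles $\mathbf{V}_h$ and the constants $c_h$ depend only on $\beta_{ij}$ for $(i,j)\in\mathcal{K}_1$, $\beta_{ii}$, $\lambda_i$), since there are finitely many subsets $\Gamma$. Taking $\Gamma=\{1,\ldots,m\}$ yields $c<\sum_{h=1}^m l_h-\delta$.

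\textbf{Main obstacle.} The delicate point is making the projection onto $\mathcal{N}_\Gamma$ compatible with the two-parameter limit: one must choose $R=R(\epsilon)$ (or iterate: first $R$, then $\epsilon$) so that the inter-group terms from Lemma \ref{Decay 2.5}, which are merely $o_R(1)$ without a quantitative rate against $\epsilon$, do not overwhelm the gain $-\tfrac14\sum c_h\,\epsilon^2|\ln\epsilon|$ coming from the $\lambda_i$-terms. Since Lemma \ref{Decay 2.5} with the explicit bubble form actually gives an algebraic decay rate $O\big((\epsilon/(R|e_h-e_k|))^{?}\big)$ type bound, one can in fact pin down $R$ as a suitable power of $1/\epsilon$; alternatively, since the claimed inequality is strict with a fixed $\delta$ and $\Gamma$ ranges over a finite set, it suffices to fix one $\epsilon_0$ small, then one $R_0$ large depending on $\epsilon_0$, uniformly over all $\Gamma$. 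A secondary technical care is the uniformity of $\delta$ in $\Gamma$, which is automatic by finiteness but should be stated, matching the role of the uniform estimate emphasized in Remark \ref{3-1}.
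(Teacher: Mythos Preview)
Your overall strategy---localized bubbles plus Brezis--Nirenberg-type expansions exploiting $\lambda_i<0$---is the right one, but the execution differs from the paper's in a way that creates a genuine gap.

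The paper places the rescaled bubbles $\mathbf{V}_h^\varepsilon$ at fixed interior points $y_1,\ldots,y_m\in\Omega$ and multiplies by cutoffs $\xi_h$ with \emph{pairwise disjoint supports}. The consequence is that $\int_\Omega |\widehat V_i^\varepsilon|^2|\widehat V_j^\varepsilon|^2=0$ for every $(i,j)\in\mathcal K_2$, so the inter-group terms vanish identically, the matrix $M_B(\widehat{\mathbf V}^\varepsilon)$ is diagonal, and the projection onto $\mathcal N_\Gamma$ is explicit and trivial. There is no $R$-parameter, no two-scale limit, and no appeal to Lemma~\ref{Decay 2.5}.

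Your variant introduces a translation scale $R$ and controls cross terms only as $o_R(1)$ via Lemma~\ref{Decay 2.5}. This is problematic for two reasons. First, in the bounded domain $\Omega$ the points $Re_h$ leave $\Omega$ for large $R$, so the construction as written is ill-posed; you seem to mix this with ``disjoint balls in $\Omega$'', but if the cutoffs already have disjoint supports the $R$-device is superfluous. Second, and more seriously, Theorem~\ref{Energy Estimates} imposes \emph{no} hypothesis on $\beta_{ij}$ for $(i,j)\in\mathcal K_2$ (your line ``since $\beta_{ij}<\Lambda_1$ on $\mathcal K_2$'' is not available here), and the conclusion demands a $\delta$ independent of these coefficients. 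With your approach the cross contribution is $\beta_{ij}\cdot o_R(1)$, so to absorb it you must choose $R$ depending on $\max_{(i,j)\in\mathcal K_2}|\beta_{ij}|$; then $\varepsilon$, and hence $\delta=\frac{1}{16}\min_h C^h\,\varepsilon^2|\ln\varepsilon|$, inherits that dependence. The paper's disjoint-support trick kills the cross terms exactly, which is precisely what guarantees the stated independence of $\delta$ from the $\mathcal K_2$-couplings---a point used crucially later when defining $\Lambda_2,\Lambda_4$ in terms of $\delta$.

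The fix is simply to drop $R$ and Lemma~\ref{Decay 2.5}: center the bubbles at $m$ fixed points of $\Omega$ with mutually disjoint cutoff supports, so the inter-group integrals are zero, the Nehari projection decouples group by group, and the remainder of your expansion gives $c_\Gamma<\sum_{h\in\Gamma}l_h-\delta$ with $\delta$ depending only on the $\mathcal K_1$-data and $\lambda_i$.
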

\begin{proof}[\bf{Proof}]
In order to reduce technicalities with indices, we present the proof for $\Gamma=\{1,\ldots, m\}$; the general case follows exactly in the same way. Take $y_{1}, ..., y_{m}\in \Omega$ such that
\begin{equation*}
0<4\rho:= \min_{1\leq h\neq k\leq m}|y_{h}-y_{k}|<\min_{1\leq h\leq m}dist(y_{h}, \partial\Omega).
\end{equation*}
Then $B_{2\rho}(y_{h})\subseteq \Omega$ and $B_{2\rho}(y_{h})\cap B_{2\rho}(y_{k})=\emptyset$ for $h\neq k$. Let $\xi_{h}\in C^{1}_{0}(B_{2\rho}(y_{h}))$ be such that $0\leq\xi_{h}\leq 1$ and $\xi_{h}\equiv 1$ for $|x-y_{h}|\leq\rho$, $h=1,...,m$.

Define
\begin{equation}\label{D-5}
\mathbf{V}_{h}^{\varepsilon}:=\varepsilon^{-1}\mathbf{V}_{h}\left(\frac{x-y_{h}}{\varepsilon}\right),
\end{equation}
where
\begin{equation*}
\mathbf{V}_{h}=X_0(f_{max}^h)^{-1/2}U_{1,0} ~\text{is a minimizer for $l_h$}
\end{equation*}
(recall Theorem \ref{limit system-6-1} and observe that $X_{0}, f_{max}^h$ are defined in \eqref{Class-1}) and so
\[
\mathbf{V}_{h}^{\varepsilon}=\varepsilon^{-1}X_0(f_{max}^h)^{-1/2}U_{1,0}\left(\frac{x-y_{h}}{\varepsilon}\right).
\]
Then we see that
\begin{equation*}
\int_{\mathbb{R}^{4}}|\nabla V_{i}^{\varepsilon}|^{2}=\int_{\mathbb{R}^{4}}|\nabla V_{i}|^{2}, \quad
\int_{\mathbb{R}^{4}}|V_{i}^{\varepsilon}|^{4}=\int_{\mathbb{R}^{4}}|V_{i}|^{4}, ~i\in I_{h}.
\end{equation*}
Define
\begin{equation}\label{Definition-2}
\widehat{\mathbf{V}}_{h}^{\varepsilon}:=\xi_{h}\mathbf{V}_{h}^{\varepsilon}.
\end{equation}
Therefore, by Lemma 1.46 in \cite{Willem 1996} we get the following inequalities

\begin{equation}\label{2.14}
\int_{\Omega}|\widehat{V}_{i}^{\varepsilon}|^{2}\geq C_{i}\varepsilon^{2}|ln\varepsilon|+O(\varepsilon^{2}), \text{ for every } i\in I_{h},
\end{equation}

\begin{equation}\label{2.12}
\int_{\Omega}|\nabla \widehat{V}_{i}^{\varepsilon}|^{2}= \int_{\mathbb{R}^{4}}|\nabla V_{i}|^{2}+O(\varepsilon^{2}), \text{ for every } i\in I_{h},
\end{equation}
where
\begin{equation}\label{Constant-2-9}
C_{i}=8(X_{0})_{i}^{2}(f_{max}^h)^{-1}.
\end{equation}
 Observe that all these quantities depend on $\beta_{ij}$ for $(i,j)\in I_h^2$. For every $(i,j)\in I_{h}^{2}$, we claim the following inequality
\begin{equation}\label{2.13}
\int_{\Omega}|\widehat{V}_{i}^{\varepsilon}|^{2}|\widehat{V}_{j}^{\varepsilon}|^{2}\geq \int_{\mathbb{R}^{4}}| V_{i}|^{2}| V_{j}|^{2}+O(\varepsilon^{4}).
\end{equation}
Observe that
\begin{align*}
\int_{\mathbb{R}^{4}}(1-\xi_{h}^{4})|\widehat{V}_{i}^{\varepsilon}|^{2}|\widehat{V}_{j}^{\varepsilon}|^{2}&\leq
\int_{|x|\geq \rho}\varepsilon^{-4}|V_{i}(x/\varepsilon)|^{2}|V_{j}(x/\varepsilon)|^{2}=\int_{|x|\geq \rho/\varepsilon}|V_{i}|^{2}|V_{j}|^{2} \\
& =\int_{|x|\geq \rho/\varepsilon}\frac{C}{(1+|x|^2)^4}\leq\int_{|x|\geq \rho/\varepsilon} \frac{C}{|x|^8}
=O(\varepsilon^{4}),
\end{align*}
where $C$ is only dependent on $\beta_{ij}$ for $(i,j)\in I_h^2$.
Hence
\begin{align*}
\int_{\Omega}|\widehat{V}_{i}^{\varepsilon}|^{2}|\widehat{V}_{j}^{\varepsilon}|^{2}&=\int_{\mathbb{R}^{4}}|{V}_{i}^{\varepsilon}|^{2}|{V}_{j}^{\varepsilon}|^{2} -\int_{\mathbb{R}^{4}}(1-\xi_{h}^{4})|\widehat{V}_{i}^{\varepsilon}|^{2}|\widehat{V}_{j}^{\varepsilon}|^{2}
\geq \int_{\mathbb{R}^{4}}| V_{i}|^{2}| V_{j}|^{2}+O(\varepsilon^{4}),
\end{align*}
which implies that \eqref{2.13} holds.

Recall that $\lambda_{1},\cdots,\lambda_{d}<0$, $\beta_{ij}\geq 0$ for every $(i,j)\in \mathcal{K}_{1}$ and $\beta_{ii}>0$. Note that $\int_{\Omega}|\widehat{V}_{i}^{\varepsilon}|^{2}|\widehat{V}_{j}^{\varepsilon}|^{2}dx=0$ for every $(i,j)\in \mathcal{K}_{2}$. Then we deduce from \eqref{2.14}-\eqref{2.12} that, given $t_1,\ldots, t_m>0$,
\begin{align}\label{2.15}
J \left(\sqrt{t_{1}}\widehat{\mathbf{V}}_{1}^{\varepsilon},..., \sqrt{t_{m}}\widehat{\mathbf{V}}_{m}^{\varepsilon}\right)&=
\frac{1}{2}\sum_{h=1}^{m}t_{h}\sum_{i\in I_{h}}\int_{\Omega}|\nabla \widehat{V}_{i}^{\varepsilon}|^{2}+\lambda_{i}| \widehat{V}_{i}^{\varepsilon}|^{2}
 -\frac{1}{4}\sum_{h=1}^{m}t_{h}^{2}\sum_{(i,j)\in I_{h}^{2}}\int_{\Omega}\beta_{ij}|\widehat{V}_{i}^{\varepsilon}|^{2}
|\widehat{V}_{j}^{\varepsilon}|^{2}\nonumber\\
&\leq
\frac{1}{2}\sum_{h=1}^{m}t_{h}\left(\int_{\mathbb{R}^{4}}|\nabla \mathbf{V}_{h}|^{2}-C^h\varepsilon^{2}|ln\varepsilon|+ O(\varepsilon^{2})\right) \nonumber\\
& \quad -\frac{1}{4}\sum_{h=1}^{m}t_{h}^{2}\left(\sum_{(i,j)\in I_{h}^{2}}\int_{\mathbb{R}^{4}}\beta_{ij}|V_{i}|^{2}|V_{j}|^{2}
+O(\varepsilon^{4})\right) \nonumber\\
&= \frac{1}{2}\sum_{h=1}^{m}t_{h}\left(4l_{h}-C^h\varepsilon^{2}|ln\varepsilon|+ O(\varepsilon^{2})\right)-\frac{1}{4}\sum_{h=1}^{m}t_{h}^{2}\left(4l_{h}+O(\varepsilon^{4})\right),
\end{align}
where $C^h:= \sum_{i\in I_h}C_i|\lambda_{i}|=\sum_{i\in I_h}8(X_{0})_{i}^{2}(f_{max}^h)^{-1}|\lambda_{i}|>0$ and $C_i$ is defined in \eqref{Constant-2-9}.
Denote
\begin{equation*}
A_{h}^{\varepsilon}:=4l_{h}-C^h\varepsilon^{2}|ln\varepsilon|+ O(\varepsilon^{2}),\quad
B_{h}^{\varepsilon}:=4l_{h}+O(\varepsilon^{4}).
\end{equation*}
It is easy to see that
\begin{equation*}
0<A_{h}^{\varepsilon}<B_{h}^{\varepsilon}, \text{ for } \varepsilon \text{ small enough. }
\end{equation*}
Combining these we have
\begin{align}\label{2.15-3}
\max_{t_{1},\ldots,t_{m}>0}J \left(\sqrt{t_{1}}\widehat{\mathbf{V}}_{1}^{\varepsilon},..., \sqrt{t_{m}}\widehat{\mathbf{V}}_{m}^{\varepsilon}\right)
&\leq\max_{t_{1},\ldots,t_{m}>0}\sum_{h=1}^{m}\left( \frac{1}{2}t_{h}A_{h}^{\varepsilon}- \frac{1}{4}t_{h}^{2}B_{h}^{\varepsilon}\right)= \frac{1}{4}\sum_{h=1}^{m}\frac{\left(A_{h}^{\varepsilon}\right)^{2}}{B_{h}^{\varepsilon}} \nonumber\\
&< \frac{1}{4}\sum_{h=1}^{m}A_{h}^{\varepsilon}=\sum_{h=1}^{m}\left(l_{h}-\frac{1}{4}C^h\varepsilon^{2}|ln\varepsilon|+ O(\varepsilon^{2})\right)\nonumber\\
&< \sum_{h=1}^{m}l_{h}-\delta ~\text{ for } \varepsilon \text{ small enough, }
\end{align}
where
\begin{equation}\label{4-p-9}
\delta:=\frac{1}{16}\min_{1\leq h\leq m}\{C^h\}\varepsilon^{2}|ln\varepsilon|,
\end{equation}
$\delta$ being a positive constant, only dependent on $\beta_{ij}\geq 0$ for $(i,j)\in \mathcal{K}_{1}$ and $\beta_{ii}, \lambda_{i} ~i=1,\ldots, d$.
Note that the matrix $M_{B}(\widehat{\mathbf{V}}^{\varepsilon})$ is diagonal, so it is easy to see that there exists $t_{1}^{\varepsilon},...,t_{m}^{\varepsilon}>0$ such that
\begin{equation*}
\left(\sqrt{t_{1}^{\varepsilon}}\widehat{\mathbf{V}}_{1}^{\varepsilon},..., \sqrt{t_{m}^{\varepsilon}}\widehat{\mathbf{V}}_{m}^{\varepsilon}\right)\in \mathcal{N}.
\end{equation*}
Thus,
\begin{equation*}
c\leq J\left(\sqrt{t_{1}^{\varepsilon}}\widehat{\mathbf{V}}_{1}^{\varepsilon},..., \sqrt{t_{m}^{\varepsilon}}\widehat{\mathbf{V}}_{m}^{\varepsilon}\right)\leq \max_{t_{1},\ldots,t_{m}>0}J \left(\sqrt{t_{1}}\widehat{\mathbf{V}}_{1}^{\varepsilon},..., \sqrt{t_{m}}\widehat{\mathbf{V}}_{m}^{\varepsilon}\right)
<\sum_{h=1}^{m}l_{h}-\delta,
\end{equation*}
which yields that $c<\sum_{h=1}^{m}l_{h}-\delta$.
\end{proof}

The following is an extension of Theorem \ref{Energy Estimates}. It compares the energy of a level $c_\Gamma$ with all the levels $c_G$ with $G\subsetneq \Gamma$, assuming the later are achieved. In the next section this will play a crucial role in proving (via mathematical induction in the number of sub-groups) that $c$ is achieved by a solution with $m$ nontrival components. Define
\begin{equation}\label{Constant-4-1}
\Lambda_{2}:= \frac{S^{2}}{16(\sum_{h=1}^{m}l_{h}-2\delta)} ~~\text{ and } \Lambda_{3}:=\min\{\Lambda_{1}, \Lambda_{2}\},
\end{equation}
where $l_{h}$ is defined in \eqref{eq:Ground State-2}, $h=1,\ldots,m$ and $\delta$  is defined in Theorem \ref{Energy Estimates}. Then we have the following result.

\begin{thm}\label{Energy Estimation-1,2,m-1}
Take
\begin{equation*}
\beta_{ij}\geq 0  \quad\forall (i,j)\in \mathcal{K}_{1} ~and~
-\infty<\beta_{ij}< \Lambda_{3} \quad\forall (i,j)\in \mathcal{K}_{2}.
\end{equation*}
Given $\Gamma\subseteq\{1,\ldots, m\}$, assume that
\[
c_G \text{ is achieved by a nonnegative } \mathbf{u}_G \text{ for every } G\subsetneq \Gamma.
\]
Then
\begin{equation*}
c_\Gamma<\min\left\{  c_G + \sum_{h\in \Gamma\setminus G } l_h-\delta:\ G\subsetneq \Gamma \right\},
\end{equation*}
where $\delta$ is defined in Theorem \ref{Energy Estimates}, depending only on $\beta_{ij}\geq 0$ for $(i,j)\in \mathcal{K}_{1}$ and $\beta_{ii}, \lambda_{i} ~i=1,\ldots, d$. (and not depending on $\Gamma$).
\end{thm}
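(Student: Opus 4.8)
The plan is to adapt the test-function construction from the proof of Theorem \ref{Energy Estimates} but now, instead of gluing together $m$ rescaled bubbles $\widehat{\mathbf{V}}_h^\varepsilon$ (one for each group), we glue together a \emph{genuine minimizer} $\mathbf{u}_G$ for the sub-level $c_G$ with the rescaled bubbles $\widehat{\mathbf{V}}_h^\varepsilon$ for the groups $h\in\Gamma\setminus G$. More precisely, fix $G\subsetneq\Gamma$ and let $\mathbf{u}_G\in\mathcal{N}_G\cap\mathcal{E}_G$ be the nonnegative minimizer realizing $c_G$; by Lemma \ref{Lagrange} it is a critical point of $J_G$. Since $\mathbf{u}_G\in H^1_0(\Omega;\mathbb{R}^{d_G})$ has compact support essentially concentrated in $\Omega$, we may pick points $y_h\in\Omega$ ($h\in\Gamma\setminus G$) together with a small radius $\rho>0$ such that the balls $B_{2\rho}(y_h)$ are pairwise disjoint and, crucially, \emph{disjoint from a neighbourhood where $\mathbf{u}_G$ carries most of its mass} — or rather, we translate the bubbles far enough and use cutoffs $\xi_h\in C_0^1(B_{2\rho}(y_h))$ so that the interaction terms between $\mathbf{u}_G$ and the $\widehat{\mathbf{V}}_h^\varepsilon$ are negligible. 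The test function is then the vector $\mathbf{w}^\varepsilon$ whose $G$-components equal $\mathbf{u}_G$ and whose $h$-components ($h\in\Gamma\setminus G$) equal $\sqrt{t_h}\,\widehat{\mathbf{V}}_h^\varepsilon$, with $\widehat{\mathbf{V}}_h^\varepsilon$ as in \eqref{Definition-2}.

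Next I would estimate $J_\Gamma(\mathbf{w}^\varepsilon)$. The energy splits into three pieces: the contribution of $\mathbf{u}_G$, which is exactly $J_G(\mathbf{u}_G)=c_G$ since $\mathbf{u}_G\in\mathcal{N}_G$; the contributions of the bubble groups, for which the estimates \eqref{2.14}, \eqref{2.12}, \eqref{2.13} from the proof of Theorem \ref{Energy Estimates} give, after maximizing over $t_h>0$, the bound $\sum_{h\in\Gamma\setminus G}(l_h-\tfrac14 C^h\varepsilon^2|\ln\varepsilon|+O(\varepsilon^2))$; and the cross terms $-\tfrac14\sum\beta_{ij}\int (w_i^\varepsilon)^2(w_j^\varepsilon)^2$ with one index in $\cup_{k\in G}I_k$ and the other in $\cup_{k\in\Gamma\setminus G}I_k$. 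Each such cross integral involves $\int_\Omega u_i^2\,(\widehat{V}_j^\varepsilon)^2$, which tends to $0$ as $\varepsilon\to 0^+$ because $\widehat{V}_j^\varepsilon$ concentrates at the point $y_h\notin\mathrm{supp}\,u_i$ (or, more robustly, because $\|\widehat{V}_j^\varepsilon\|_{L^q_{loc}}\to 0$ away from $y_h$ for suitable $q$, combined with the boundedness of $u_i$). Hence the cross terms are $o(1)$, and choosing $\varepsilon$ small enough we obtain
\[
J_\Gamma(\mathbf{w}^\varepsilon)\le c_G+\sum_{h\in\Gamma\setminus G}l_h-\tfrac14\Big(\min_{h\in\Gamma\setminus G}C^h\Big)\varepsilon^2|\ln\varepsilon|+o(\varepsilon^2|\ln\varepsilon|)<c_G+\sum_{h\in\Gamma\setminus G}l_h-\delta,
\]
with $\delta$ exactly as in \eqref{4-p-9} (note $\min_{h}C^h$ over all of $\{1,\dots,m\}$ is used so that $\delta$ does not depend on $\Gamma$ or $G$). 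To conclude I must still project $\mathbf{w}^\varepsilon$ onto $\mathcal{N}_\Gamma$: here I would invoke that $M_B^\Gamma(\mathbf{w}^\varepsilon)$ is, for $\varepsilon$ small, a small perturbation of a block-diagonal matrix with strictly positive diagonal (the $G$-block is positive definite since $\mathbf{u}_G\in\mathcal{E}_G$, and the bubble blocks are diagonal with positive entries), hence strictly diagonally dominant; this guarantees the existence of multipliers $s_1,\dots,s_{|\Gamma|}>0$ with $(\sqrt{s_k}\,\mathbf{w}_k^\varepsilon)_{k\in\Gamma}\in\mathcal{N}_\Gamma$ — this is the role of the smallness condition $\beta_{ij}<\Lambda_3$ via Lemma \ref{Positive Definite}, ensuring the relevant matrices stay in $\mathcal{E}$ along the way. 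Since $\mathcal{N}_\Gamma$-projection does not increase the energy above $\max_{s_k>0}J_\Gamma(\sqrt{s_k}\mathbf{w}_k^\varepsilon)$, and this max is bounded by the quantity above, we get $c_\Gamma\le J_\Gamma(\text{projection})<c_G+\sum_{h\in\Gamma\setminus G}l_h-\delta$. Taking the minimum over all $G\subsetneq\Gamma$ finishes the proof.

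The main obstacle I anticipate is twofold and intertwined. First, one must ensure the projection onto $\mathcal{N}_\Gamma$ is well-defined and energy-decreasing: because $\mathbf{u}_G$ sits on its own Nehari set $\mathcal{N}_G$ (so its "$t=1$"), while the bubbles need their own scaling parameters, the coupled system of Nehari equations for $\mathbf{w}^\varepsilon$ must be shown to have a solution with all multipliers near $1$ — this is where strict diagonal dominance of $M_B^\Gamma(\mathbf{w}^\varepsilon)$ and the bound $\Lambda_3$ enter decisively, and one has to check the energy along the projection stays $\le 2\overline C$ so that Lemma \ref{Positive Definite} applies. Second, one must verify that the cross interaction terms really are $o(\varepsilon^2|\ln\varepsilon|)$, not merely $o(1)$, so that they are dominated by the gain $-\tfrac14 C^h\varepsilon^2|\ln\varepsilon|$ coming from $\lambda_i<0$; this requires a quantitative decay estimate such as $\int_\Omega u_i^2(\widehat V_j^\varepsilon)^2=O(\varepsilon^2)$ (using that $\widehat V_j^\varepsilon$ is, up to the cutoff, the rescaled bubble $U_{\varepsilon,y_h}$ whose $L^2$-mass on any set bounded away from $y_h$ is $O(\varepsilon^2)$), which is straightforward but must be done carefully since it is precisely the term that decides the sign of the estimate.
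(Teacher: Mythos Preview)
Your overall strategy matches the paper's: glue a minimizer $\mathbf{u}_G$ for $c_G$ with rescaled bubbles $\widehat{\mathbf{V}}_h^\varepsilon$ for $h\in\Gamma\setminus G$, then project onto $\mathcal{N}_\Gamma$. However, there is a genuine gap in your treatment of the cross terms, and it is exactly the obstacle you flag at the end.

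Your proposed estimate $\int_\Omega u_i^2(\widehat V_j^\varepsilon)^2=O(\varepsilon^2)$ is \emph{false} in dimension $4$. Since the nontrivial components of $\mathbf{u}_G$ are positive in $\Omega$ (strong maximum principle), you cannot choose $y_h\notin\mathrm{supp}\,u_i$; thus $u_i(y_h)>0$, and on a small ball around $y_h$ one has $u_i\ge c_0>0$. But the $L^2$-mass of the bubble concentrates at $y_h$: precisely \eqref{2.14} says $\int_\Omega(\widehat V_j^\varepsilon)^2\ge C\varepsilon^2|\ln\varepsilon|$. Hence
\[
\int_\Omega u_i^2(\widehat V_j^\varepsilon)^2\;\ge\; c_0^2\int_{B_\rho(y_h)}(\widehat V_j^\varepsilon)^2\;\sim\; C\,\varepsilon^2|\ln\varepsilon|,
\]
which is the \emph{same order} as the gain $-\tfrac14 C^h\varepsilon^2|\ln\varepsilon|$. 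Worse, since $\beta_{ij}$ for $(i,j)\in\mathcal{K}_2$ may be arbitrarily negative, the bad contribution $-\tfrac12\beta_{ij}\int u_i^2(\widehat V_j^\varepsilon)^2$ can overwhelm the gain. So the cross terms are \emph{not} $o(\varepsilon^2|\ln\varepsilon|)$, and the inequality you want does not follow from decay alone.

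The paper's fix is to exploit the boundary condition. Since $\mathbf{u}_G\in C^2(\overline\Omega)$ with $u_i|_{\partial\Omega}=0$, one can choose a small ball $B_{2\rho}(x_0)\subset\Omega$ near $\partial\Omega$ on which $\max_i u_i^2\le\Pi^2$, with $\Pi^2$ as small as desired (see \eqref{3-p-23}--\eqref{3-p-16}). All bubble centers $x_{p+1},\dots,x_m$ are placed inside this ball. Then the cross terms are bounded by $\Pi^2\cdot(\text{const})\cdot t_h\int(v_j^\varepsilon)^2$, and by choosing $\Pi^2$ small (depending on $\max_{(i,j)\in\mathcal{K}_2}|\beta_{ij}|$, $\widehat t$, $\min_i|\lambda_i|$, but \emph{not} on $\varepsilon$) one absorbs them into exactly \emph{half} of the $\lambda_i$-term; this is the content of \eqref{1,2-11-m}. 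The remaining half still produces the gain $-\delta$ as in Theorem \ref{Energy Estimates}.

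A secondary point: your Nehari projection is too quick. Strict diagonal dominance of $M_B^\Gamma(\mathbf{w}^\varepsilon)$ gives positive definiteness, hence a unique critical point of $\Phi(t_1,\dots,t_{|\Gamma|})$ in $\mathbb{R}^{|\Gamma|}$, but not automatically one with all $t_k>0$. The paper handles this in Lemma \ref{t-positive} by a contradiction argument: if some $t_k^\varepsilon=0$, one uses the bound $\beta_{ij}<\Lambda_2$ (this is where $\Lambda_3=\min\{\Lambda_1,\Lambda_2\}$ enters) together with the a priori energy bound \eqref{3-p-13} to show that $\Phi$ strictly increases as $t_k$ leaves $0$, contradicting maximality.
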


The rest of this section is dedicated to the proof of this theorem. Without loss of generality, we fix $1\leq p<m$ and prove that
\begin{equation}\label{eq:c<c_p}
c<c_{1,\ldots,p}+\sum_{h=p+1}^{m}l_{h}-\delta
\end{equation}
(where we use the notation $c_{1,\ldots, p}$ instead of $c_{\{1,\ldots,p\}}$ for simplicity); the other inequalities follow in the same way. The important fact that we like to stress is that  $\delta$ does not depend on $\Gamma$ nor on $\beta_{ij}$ for $(i,j)\in \mathcal{K}_2$; in order to highlight that, we always exhibit the explicit dependences of the constants.

Let $-\infty<\beta_{ij}< \Lambda_{1} ~\forall (i,j)\in \mathcal{K}_{2}$. Suppose that $c_{1,\ldots,p}$ is attained by a nonnegative $\mathbf{u}^{p}=(\mathbf{u}_{1},\cdots, \mathbf{u}_{p})$. Then by Lemma \ref{Pre-1}, Lemma \ref{Positive Definite} and Lemma \ref{Lagrange} we infer that $\mathbf{u}^{p}$ is a solution of the corresponding subsystem. By theory of elliptic regularity we get that $u_{i}\in C^{2}(\overline{\Omega})$.
Observe that $u_{i}\equiv 0$ on $\partial\Omega$, $i\in I^{p}:=I_1\cup\ldots
\cup I_p$. Taking $x_{1}\in \partial\Omega$, there exists $\rho_{1}>0$
such that
\begin{align}\label{3-p-23}
 \Pi^{2}&:= \max_{i\in I^{p}}\sup_{x\in B_{2\rho_{1}}(x_{1})\cap \Omega}u_{i}^{2}(x)\leq \min\Bigg\{ \frac{\min_{1\leq i\leq d}\{|\lambda_{i}|\}}{2d\max_{(i,j)\in \mathcal{K}_{2}}\{|\beta_{ij}|\}\widehat{t}}, \nonumber\\
 & \frac{SC_{1}}{4d^{3}\widetilde{C}\max_{(i,j)\in \mathcal{K}_{2}}\{|\beta_{ij}|\}}, ~~
 \frac{\min_{1\leq h\leq m}\{l_{h}\}}{d^{3}\widetilde{C}\max_{(i,j)\in \mathcal{K}_{2}}\{|\beta_{ij}|\}}\Bigg\},
\end{align}
where $C_{1}$ is defined in Lemma \ref{Bounded}, $\widetilde{C}:=\sqrt{\frac{4\max_h\{l_h\} |\Omega|}{\min_i\{\beta_{ii}\}}}$, and
\begin{equation}\label{eq:t_and_theta}
\widehat{t}= \frac{8\max\{\overline{C},l_1,\ldots, l_m \}}{\theta},\qquad \text{ where } \theta:=\min\left\{\frac{S}{4}C_{1}, l_{1},\ldots, l_{m}\right\}.
\end{equation}
Therefore, there exists $\rho>0$ such that $B_{2\rho}(x_{0})\subseteq B_{2\rho_{1}}(x_{1})\cap \Omega$, and so
\begin{equation}\label{3-p-16}
\sup_{x\in B_{2\rho}(x_{0})}u_{i}^{2}(x)\leq \Pi^{2} ~\forall i\in I^{p}.
\end{equation}

Take $x_{p+1},\cdots, x_{m}\in B_{2\rho}(x_{0})$ and $\rho_{p+1},\cdots,\rho_{m}>0$ such that $B_{2\rho_{p+1}}(x_{p+1}), \cdots, B_{2\rho_{m}}(x_{m})\subset B_{2\rho}(x_{0})$ and $B_{2\rho_i}(x_i)\cap B_{2\rho_j}(x_j)=\emptyset$ for every $i\neq j$, $i,j\in \{p+1,\ldots, m\}$. Let $\xi_{h}\in C^{1}_{0}(B_{2\rho_{h}}(x_{h}))$ be a nonnegative function with $\xi_{h}\equiv 1$ for $|x-x_{h}|\leq \rho_{h}, h=p+1,\ldots,m$.
Define
\begin{equation}\label{3-p-21}
\mathbf{v}_{h}^{\varepsilon}:=\xi_{h}\mathbf{V}^{\varepsilon}_{h},
\end{equation}
where
\begin{equation*}
\mathbf{V}_{h}^{\varepsilon}:=  \varepsilon^{-1}\mathbf{V}_{h}\left(\frac{x-y_{h}}{\varepsilon}\right)= \varepsilon^{-1}X_0(f_{max}^h)^{-1/2}U_{1,0}\left(\frac{x-y_{h}}{\varepsilon}\right), \quad \mathbf{V}_{h}=X_0(f_{max}^h)^{-1/2}U_{1,0} ~\text{ a minimizer for $l_h$}.
\end{equation*}
Similarly to \eqref{2.14}-\eqref{2.12} from the proof of the previous theorem, we have the following inequalities
\begin{equation}\label{2.12-1,2-m}
\int_{\Omega}|\nabla v_{i}^{\varepsilon}|^{2}= \int_{\mathbb{R}^{4}}|\nabla V_{i}|^{2}+O(\varepsilon^{2}),\text{ for every } i\in I_{h}, ~h=p+1,\ldots,m,
\end{equation}
\begin{equation}\label{2.13-1,2-m}
\int_{\Omega}|v_{i}^{\varepsilon}|^{2}|v_{j}^{\varepsilon}|^{2}\geq \int_{\mathbb{R}^{4}}| V_{i}|^{2}| V_{j}|^{2}+O(\varepsilon^{4}),\text{ for every } (i,j)\in I_{h}^{2}, ~h=p+1,\ldots,m.
\end{equation}
Moreover, we have
\begin{align}
\int_{\Omega}|v_{i}^{\varepsilon}|^{2} &\leq |\Omega|^{\frac{1}{2}} \left(\int_{\Omega}|v_{i}^{\varepsilon}|^{4}\right)^{\frac{1}{2}}
\leq  |\Omega|^{\frac{1}{2}} \left(\int_{\mathbb{R}^{4}}|V_{i}|^{4}\right)^{\frac{1}{2}}\nonumber\\
&=\frac{|\Omega|^{\frac{1}{2}}}{\beta_{ii}^\frac{1}{2}} \left(\int_{\mathbb{R}^{4}}\beta_{ii}|V_{i}|^{4}\right)^{\frac{1}{2}}
\leq \frac{|\Omega|^{\frac{1}{2}}}{\min\{\beta_{ii}^\frac{1}{2}\}} \left(\sum_{(i,j)\in I_{h}^{2}}\int_{\mathbb{R}^{4}}\beta_{ij}V_{i}^{2}V_{j}^{2}\right)^{\frac{1}{2}}\nonumber\\
&\leq \sqrt{\frac{4\max_h\{l_h\} |\Omega|}{\min_i\{\beta_{ii}\}}}=\widetilde{C}.
\end{align}
Combining this with \eqref{2.14}, we have
\begin{equation}\label{2.14-1,2-m}
C\varepsilon^{2}|ln\varepsilon|+O(\varepsilon^{2})\leq\int_{\Omega}|v_{i}^{\varepsilon}|^{2}\leq \widetilde{C}, \text{ for every } i\in I_{h},
\end{equation}
where we remark that $C, \widetilde{C}>0$ are dependent only on $\beta_{ii}>0$, $\beta_{ij}\geq 0$ for $(i,j)\in \mathcal{K}_{1}$.

Let us now explain the idea of the proof of \eqref{eq:c<c_p} (and so, of Theorem \ref{Energy Estimation-1,2,m-1}).
Consider
\begin{align}\label{3-p-4}
\Phi(t_{1},\cdots,t_{m}):&=J(\sqrt{t_{1}}\mathbf{u}_{1},\cdots, \sqrt{t_{p}}\mathbf{u}_{p}, \sqrt{t_{p+1}}\mathbf{v}_{p+1}^{\varepsilon},\cdots,\sqrt{t_{m}}\mathbf{v}_{m}^{\varepsilon})\nonumber\\
& =\frac{1}{2}\sum_{k=1}^{p}t_{k}\|\mathbf{u}_{k}\|_{k}^{2}
+\sum_{h=p+1}^{m}\frac{1}{2}t_{h}\sum_{i\in I_{h}}\int_{\Omega}|\nabla v^{\varepsilon}_{i}|^{2}+\lambda_{i}| v^{\varepsilon}_{i}|^{2}\, dx-\frac{1}{4}M_{B}(\mathbf{u}^{\varepsilon})\mathbf{t}\cdot\mathbf{t},
\end{align}
where
\begin{equation}\label{3-p-6-1}
\mathbf{u}^{\varepsilon}:= (\mathbf{u}_{1}^{\varepsilon},\cdots, \mathbf{u}_{m}^{\varepsilon})=(\mathbf{u}^{p}, \mathbf{v}_{p+1}^{\varepsilon},\cdots, \mathbf{v}_{m}^{\varepsilon}),
\end{equation}
$\mathbf{u}^{p}=(\mathbf{u}_{1},\cdots, \mathbf{u}_{p})$ is a vector with nonnegative components attaining $c_{1,\ldots,p}$, and $\mathbf{v}_{h}^{\varepsilon}$ is defined in \eqref{3-p-21}.
The strategy is to prove that
\begin{equation}\label{3-p-3-18}
\max_{t_{1},\cdots,t_{m}>0} J\left(\sqrt{t_{1}}\mathbf{u}_{1},\cdots, \sqrt{t_{p}}\mathbf{u}_{p},\sqrt{t_{p+1}}\mathbf{v}_{p+1}^{\varepsilon},\cdots,\sqrt{t_{m}}\mathbf{v}_{m}^{\varepsilon}\right)<
c_{1,\ldots,p}+\sum_{h=p+1}^{m}l_{h}-\delta,
\end{equation}
and that there exists $t_{1}^{\varepsilon},\ldots,t_{p}^{\varepsilon},\ldots,t_{m}^{\varepsilon}>0$ such that
\begin{equation}\label{3-p-3-4}
\left(\sqrt{t_{1}^{\varepsilon}}\mathbf{u}_{1},\cdots,\sqrt{t_{p}^{\varepsilon}}\mathbf{u}_{p}, \sqrt{t_{p+1}^{\varepsilon}}\mathbf{v}_{p+1}^{\varepsilon},\cdots,\sqrt{t_{m}^{\varepsilon}}\mathbf{v}_{m}^{\varepsilon}\right)\in \mathcal{N}.
\end{equation}
Obviously \eqref{eq:c<c_p} is a consequence of \eqref{3-p-3-18} and \eqref{3-p-3-4}.

Firstly we show that $M_{B}(\mathbf{u}^{\varepsilon})$ is positive definite.
\begin{lemma}\label{Con-4-1}
If
\begin{equation*}
\beta_{ij}\geq 0  \quad\forall (i,j)\in \mathcal{K}_{1} ~and~
-\infty<\beta_{ij}< \Lambda_{1} \quad\forall (i,j)\in \mathcal{K}_{2}
\end{equation*}
then $M_{B}(\mathbf{u}^{\varepsilon})$ is strictly diagonally dominant and $M_{B}(\mathbf{u}^{\varepsilon})$ is
positive definite for small $\varepsilon$. Moreover, we can get that
\begin{equation}\label{3-p-7-3}
\kappa_{min}\geq \theta,
\end{equation}
where $\kappa_{min}$ is the minimum eigenvalues of $M_{B}(\mathbf{u}^{\varepsilon})$ and $\theta$ is defined in \eqref{eq:t_and_theta}.
\end{lemma}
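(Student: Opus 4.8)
The plan is to read off the block structure of $M_B(\mathbf{u}^\varepsilon)$, to establish a \emph{quantitative} version of strict diagonal dominance, and then to deduce $\kappa_{\min}\geq\theta$ from Gershgorin's theorem (recalling that for a real symmetric matrix each eigenvalue lies in $[a_{hh}-\sum_{k\neq h}|a_{hk}|,\ a_{hh}+\sum_{k\neq h}|a_{hk}|]$ for some $h$). Split $\{1,\ldots,m\}$ into the ``old'' indices $\{1,\ldots,p\}$ and the ``new'' ones $\{p+1,\ldots,m\}$. Since the components of $\mathbf{u}^\varepsilon$ in the first $p$ groups are exactly those of $\mathbf{u}^p$, the top-left $p\times p$ block of $M_B(\mathbf{u}^\varepsilon)$ equals $M_B^{\{1,\ldots,p\}}(\mathbf{u}^p)$; since the supports of $\mathbf{v}_{p+1}^\varepsilon,\ldots,\mathbf{v}_m^\varepsilon$ are pairwise disjoint, the bottom-right block is diagonal, its $h$-th entry being $\sum_{(i,j)\in I_h^2}\int_\Omega\beta_{ij}(v_i^\varepsilon)^2(v_j^\varepsilon)^2$, which by \eqref{2.13-1,2-m} and $\sum_{(i,j)\in I_h^2}\int_{\R^4}\beta_{ij}V_i^2V_j^2=\|\mathbf{V}_h\|_{\mathbb{D}_h}^2=4l_h$ is $\geq 4l_h+O(\varepsilon^4)$. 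The only remaining entries are the cross terms $c_{hk}^\varepsilon:=\sum_{(i,j)\in I_h\times I_k}\int_\Omega\beta_{ij}u_i^2(v_j^\varepsilon)^2$ with $h\leq p<k$ and their symmetric counterparts.

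For the top-left block, Lemma \ref{Pre-1} gives $\sum_{h\leq p}\|\mathbf{u}_h\|_h^2=4c_{1,\ldots,p}\leq 4\overline{C}\leq 8\overline{C}$, so Lemma \ref{Positive Definite} already yields strict diagonal dominance; to quantify it, I would use the Nehari identity defining $\mathcal{N}_{\{1,\ldots,p\}}$ to write the $h$-th diagonal entry as $\|\mathbf{u}_h\|_h^2$ minus the row-sum of the off-diagonal entries, split those entries into positive and negative parts, bound the positive part by $\Lambda_1$ times a product of $L^4$-norms, and apply \eqref{Constant-1} together with $\Lambda_1=S^2/(32\overline{C})$ from \eqref{Constant-6} to see that this positive part is at most $\tfrac18\|\mathbf{u}_h\|_h^2$. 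Then the $h$-th diagonal excess of the top-left block is at least $\tfrac34\|\mathbf{u}_h\|_h^2>\tfrac34 SC_1\geq 3\theta$, where the strict bound $\|\mathbf{u}_h\|_h^2\geq S\sum_{i\in I_h}|u_i|_4^2>SC_1$ follows from \eqref{Constant-1} and Lemma \ref{Bounded} (applicable since $J_{1,\ldots,p}(\mathbf{u}^p)=c_{1,\ldots,p}\leq\overline{C}\leq 2\overline{C}$), and $\theta\leq\tfrac{S}{4}C_1$ by \eqref{eq:t_and_theta}.

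For the cross terms, since $\mathrm{supp}(v_j^\varepsilon)\subseteq B_{2\rho}(x_0)$ where $u_i^2\leq\Pi^2$ by \eqref{3-p-16}, and $\int_\Omega(v_j^\varepsilon)^2\leq\widetilde{C}$ by \eqref{2.14-1,2-m}, we get $|c_{hk}^\varepsilon|\leq|I_h|\,|I_k|\max_{(i,j)\in\mathcal{K}_2}|\beta_{ij}|\,\Pi^2\widetilde{C}\leq d^2\max_{\mathcal{K}_2}|\beta_{ij}|\,\Pi^2\widetilde{C}$, which the choice of $\Pi^2$ in \eqref{3-p-23} makes $\leq\theta/d$; summing over the at most $m-p<d$ opposite-block columns, the total cross contribution to any row is $<\theta$. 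Combining: for $h\leq p$ the diagonal excess is $\geq 3\theta-\theta=2\theta$, and for $h>p$ the diagonal entry is $\geq 4l_h+O(\varepsilon^4)\geq 3l_h\geq 3\theta$ for $\varepsilon$ small while the row-sum of off-diagonals is again $<\theta$, so the excess is $\geq 2\theta$. Hence for all small $\varepsilon$, $M_B(\mathbf{u}^\varepsilon)$ has positive diagonal entries and diagonal excess $\geq\theta>0$, so it is strictly diagonally dominant, hence positive definite; being symmetric, Gershgorin's theorem gives $\kappa_{\min}\geq\min_h\big((M_B(\mathbf{u}^\varepsilon))_{hh}-\sum_{k\neq h}|(M_B(\mathbf{u}^\varepsilon))_{hk}|\big)\geq\theta$.

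I expect the main obstacle to be bookkeeping rather than anything conceptual: one must check that $\Lambda_1$, $\Pi^2$, $\widetilde{C}$, $C_1$ and $\theta$ fit together so that every estimate lands on the common target $\theta$ — in particular that the positive-part bound for the off-diagonal entries of the top-left block really involves only $\Lambda_1$ and not the (possibly very negative) competitive coefficients, and that the $O(\varepsilon^2),O(\varepsilon^4)$ remainders in \eqref{2.12-1,2-m}--\eqref{2.14-1,2-m} depend only on $\beta_{ii}$ and on $\beta_{ij}$ with $(i,j)\in\mathcal{K}_1$, so that the threshold on $\varepsilon$ is uniform in the $\mathcal{K}_2$-parameters.
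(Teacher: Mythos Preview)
Your proposal is correct and follows essentially the same strategy as the paper: decompose $M_B(\mathbf{u}^\varepsilon)$ into the top-left $p\times p$ block (handled via the Nehari identity for $\mathbf{u}^p\in\mathcal{N}_{1,\ldots,p}$ together with the bound $\beta_{ij}<\Lambda_1$), the diagonal bottom-right block (controlled via \eqref{2.13-1,2-m}), and the cross terms (controlled via \eqref{3-p-16} and \eqref{2.14-1,2-m} with the choice of $\Pi^2$ in \eqref{3-p-23}), then conclude by Gershgorin. The only differences are cosmetic: you obtain a diagonal excess of $\tfrac34\|\mathbf{u}_h\|_h^2$ for the top-left block (the paper gets $\tfrac12\|\mathbf{u}_h\|_h^2$), you organise the cross-term estimate as ``each term $\leq\theta/d$, hence sum $<\theta$'' whereas the paper bounds the total sum by $d^3\widetilde C\max|\beta_{ij}|\Pi^2$ directly, and you land on a final excess $\geq 2\theta$ rather than the paper's $\geq\theta$; all of this is immaterial for the conclusion.
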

\begin{proof}[\bf{Proof}] We need to check that
\begin{equation}\label{eq:diagonallydominant}
\sum_{(i,j)\in I_{k}^{2}}\int_{\Omega}\beta_{ij}(u_{i}^\varepsilon)^{2}(u_{j}^\varepsilon)^{2} -\sum_{l=1,l\neq k}^{m}
\left|\sum_{(i,j)\in I_{k}\times I_{l}}\int_{\Omega}\beta_{ij} (u_{i}^\varepsilon)^{2}(u_{j}^\varepsilon)^{2}\right|\geq \theta \qquad k=1,\ldots,m
\end{equation}
(recall that $\beta_{ij}\geq 0$ for every $(i,j)\in \mathcal{K}_1$). We separate the proof in two cases: first for $k=1,\ldots, p$, and then for $k=p+1,\ldots, m$.

\noindent \textbf{Case 1.} Firstly, we claim that
\begin{equation}\label{3-p-7}
\sum_{(i,j)\in I_{k}^{2}}\int_{\Omega}\beta_{ij}u_{i}^{2}u_{j}^{2} -\sum_{l=1,l\neq k}^{p}
\left|\sum_{(i,j)\in I_{k}\times I_{l}}\int_{\Omega}\beta_{ij} u_{i}^{2}u_{j}^{2}\right|\geq \frac{S}{2}C_{1}, ~k=1,\ldots, p,
\end{equation}
where $C_{1}$ is defined in Lemma \ref{Bounded}.
In fact, without loss of generality, there exists $\overline{m}_{1}\in \{0,\ldots,p\}$ such that
\begin{align}\label{Matrix-6-3}
\sum_{l=1,l\neq k}^{p}
\left|\sum_{(i,j)\in I_{k}\times I_{l}}\int_{\Omega}\beta_{ij} u_{i}^{2}u_{j}^{2}\right| =
-\sum_{l=1,l\neq k}^{\overline{m}_{1}}\sum_{(i,j)\in I_{k}\times I_{l}}\int_{\Omega}\beta_{ij} u_{i}^{2}u_{j}^{2}
 +\sum_{l=\overline{m}_{1}+1,l\neq k}^{p}\sum_{(i,j)\in I_{k}\times I_{l}}\int_{\Omega}\beta_{ij}u_{i}^{2}u_{j}^{2}.
\end{align}
Note that $\mathbf{u}^{p}\in\mathcal{N}_{1,\ldots,p}$, then we have
\begin{equation}\label{Matrix-5-3}
\sum_{(i,j)\in I_{k}^{2}}\int_{\Omega}\beta_{ij}u_{i}^{2}u_{j}^{2}=\sum_{i\in I_{k}}\|u_{i}\|_{i}^{2}-
\sum_{l=1,l\neq k}^{p}\sum_{(i,j)\in I_{k}\times I_{l}}\int_{\Omega}\beta_{ij}u_{i}^{2}u_{j}^{2}.
\end{equation}
Combining this with \eqref{Matrix-6-3} we know that
\begin{align}\label{Matrix-7-3}
\sum_{(i,j)\in I_{k}^{2}}\int_{\Omega}\beta_{ij}u_{i}^{2}u_{j}^{2} & -\sum_{l=1,l\neq k}^{p}
\left|\sum_{(i,j)\in I_{k}\times I_{l}}\int_{\Omega}\beta_{ij} u_{i}^{2}u_{j}^{2}\right|=\nonumber\\
& \sum_{i\in I_{k}}\|u_{i}\|_{i}^{2}
 -2\sum_{l=\overline{m}_{1}+1,l\neq k}^{p}\sum_{(i,j)\in I_{k}\times I_{l}}\int_{\Omega}\beta_{ij}u_{i}^{2}u_{j}^{2}.
\end{align}
We deduce from Lemma \ref{Pre-1} that $\sum_{k=1}^{p}\|\mathbf{u}_{k}\|_{k}^{2}=4c_{1,\ldots,p}<4\overline{C}$, then we have
\begin{align}\label{Matrix-7-1-3}
2\sum_{l=\overline{m}_{1}+1,l\neq k}^{p}\sum_{(i,j)\in I_{k}\times I_{l}}\int_{\Omega}\beta_{ij}u_{i}^{2}u_{j}^{2}&<
\frac{2\Lambda_{2}}{S^{2}} \sum_{l=\overline{m}_{1}+1,l\neq k}^{p}\sum_{(i,j)\in I_{k}\times I_{l}}\|u_{i}\|_{i}^{2}\|u_{j}\|_{j}^{2}\nonumber\\
& \leq \frac{8\Lambda_{2}\overline{C}}{S^{2}}\sum_{i\in I_{k}}\|u_{i}\|_{i}^{2}<\frac{1}{2}\sum_{i\in I_{k}}\|u_{i}\|_{i}^{2}.
\end{align}
We see from \eqref{Matrix-7-3}-\eqref{Matrix-7-1-3} and Lemma \ref{Bounded} that
\begin{align}\label{Matrix-2}
\sum_{(i,j)\in I_{k}^{2}}\int_{\Omega}\beta_{ij}u_{i}^{2}u_{j}^{2} & -\sum_{l=1,l\neq k}^{p}
\left|\sum_{(i,j)\in I_{k}\times I_{l}}\int_{\Omega}\beta_{ij} u_{i}^{2}u_{j}^{2}\right|\nonumber\\
& \geq\frac{1}{2}\sum_{i\in I_{k}}\|u_{i}\|_{i}^{2}\geq \frac{S}{2}\sum_{i\in I_{k}}|u_{i}|_{4}^{2}\geq \frac{S}{2}C_{1}, ~k=1,\ldots, p.
\end{align}
For $k=1,\ldots, p$ and  $h=p+1,\ldots, m$, by \eqref{3-p-16} and \eqref{2.14-1,2-m} we see that
\begin{align}\label{3-p-8}
\left|\sum_{(i,j)\in I_{k}\times I_{h}}
\int_{\Omega}\beta_{ij}u_{i}^{2}(v^{\varepsilon}_{j})^{2}\right|& \leq \max_{(i,j)\in \mathcal{K}_{2}}\{|\beta_{ij}|\} \Pi^{2}\sum_{(i,j)\in I_{k}\times I_{h}}
\int_{B_{2\rho_{h}}(x_{h})}|v^{\varepsilon}_{j}|^{2}\nonumber\\
& \leq d^{2}\widetilde{C}\max_{(i,j)\in \mathcal{K}_{2}}\{|\beta_{ij}|\} \Pi^{2},
\end{align}
where $\widetilde{C}$ is defined in \eqref{2.14-1,2-m}. It follows from \eqref{3-p-23}, \eqref{Matrix-2}-\eqref{3-p-8} that
\begin{align}\label{3-p-10}
\sum_{(i,j)\in I_{k}^{2}}\int_{\Omega}\beta_{ij}u_{i}^{2}u_{j}^{2} &-\sum_{l=1,l\neq k}^{p}
\left|\sum_{(i,j)\in I_{k}\times I_{l}}\int_{\Omega}\beta_{ij} u_{i}^{2}u_{j}^{2}\right|-\sum_{l=p+1,l\neq k}^{m}\left|\sum_{(i,j)\in I_{k}\times I_{l}}
\int_{\Omega}\beta_{ij}|u_{i}|^{2}|v^{\varepsilon}_{j}|^{2}\right|\nonumber\\
& \geq\frac{S}{2}C_{1}-d^{3}\widetilde{C}\max_{(i,j)\in \mathcal{K}_{2}}\{|\beta_{ij}|\}\Pi^{2}\geq\frac{S}{4}C_{1}\geq \theta,  ~k=1,\ldots, p.
\end{align}

\noindent \textbf{Case 2.} For every $k=p+1,\ldots,m$, we see from \eqref{2.13-1,2-m} that
\begin{equation}\label{2.13-1,2-p}
\sum_{(i,j)\in I_{k}^{2}}\int_{\Omega}\beta_{ij}|v_{i}^{\varepsilon}|^{2}|v_{j}^{\varepsilon}|^{2}\geq \sum_{(i,j)\in I_{k}^{2}}\int_{\mathbb{R}^{4}}\beta_{ij}| V_{i}|^{2}| V_{j}|^{2}+O(\varepsilon^{4})\geq 2l_{k} \text{ for } \varepsilon \text{ small enough}.
\end{equation}
Combining this with \eqref{3-p-23} and \eqref{3-p-8}, since $u_i^\varepsilon\cdot u_j^\varepsilon=v_i^\varepsilon\cdot v_j^\varepsilon=0$ whenever $i\in I_k,j\in I_l$, $k\neq l$, $k,l\in \{p+1,\ldots, m\}$, we see that
\begin{align}\label{3-p-11}
\sum_{(i,j)\in I_{k}^{2}}\int_{\Omega}\beta_{ij}| u^{\varepsilon}_{i}|^{2}|u^{\varepsilon}_{j}|^{2} &-\sum_{l=1,l\neq k}^{m}\left|\sum_{(i,j)\in I_{k}\times I_{l}}
\int_{\Omega}\beta_{ij}|u^{\varepsilon}_{i}|^{2}|u^{\varepsilon}_{j}|^{2}\right|\nonumber\\
&=\sum_{(i,j)\in I_{k}^{2}}\int_{\Omega}\beta_{ij}| v^{\varepsilon}_{i}|^{2}|v^{\varepsilon}_{j}|^{2} -\sum_{l=1,l\neq k}^{p}\left|\sum_{(i,j)\in I_{k}\times I_{l}}
\int_{\Omega}\beta_{ij}|u_{i}|^{2}|v^{\varepsilon}_{j}|^{2}\right|\nonumber\\
& \geq 2l_{k}-d^{3}\widetilde{C}\max_{(i,j)\in \mathcal{K}_{2}}\{|\beta_{ij}|\} \Pi^{2}
\geq l_{k}\geq \theta,  ~k=p+1,\ldots, m.
\end{align}

We deduce from \eqref{3-p-10} and \eqref{3-p-11} that $M_{B}(\mathbf{u}^{\varepsilon})$ is strictly diagonally dominant, and so $M_{B}(\mathbf{u}^{\varepsilon})$ is positive definite.

For any eigenvalue $\kappa$ of $M_{B}(\mathbf{u}^{\varepsilon})$, from  \eqref{eq:diagonallydominant} and by the Gershgorin circle theorem we see that $\kappa\geq \theta$. Thus, $\kappa_{min}\geq \theta$.
\end{proof}

Based upon the former lemma, we can now prove that \eqref{3-p-3-18} holds.

\begin{lemma}\label{Con-4-2}
Assume that
\begin{equation*}
\beta_{ij}\geq 0  \quad\forall (i,j)\in \mathcal{K}_{1} ~and~
-\infty<\beta_{ij}< \Lambda_{1} \quad\forall (i,j)\in \mathcal{K}_{2}.
\end{equation*}
Then we have
\begin{equation}\label{eq:maximum_estimate}
\max_{t_{1},\cdots,t_{m}>0} J\left(\sqrt{t_{1}}\mathbf{u}_{1},\cdots, \sqrt{t_{p}}\mathbf{u}_{p},\sqrt{t_{p+1}}\mathbf{v}_{p+1}^{\varepsilon},\cdots,\sqrt{t_{m}}\mathbf{v}_{m}^{\varepsilon}\right)<
c_{1,\ldots,p}+\sum_{h=p+1}^{m}l_{h}-\delta
\end{equation}
for small $\varepsilon$, where $\delta$ is as in Theorem \ref{Energy Estimates} and $c_{1,\ldots,p}$ is defined in \eqref{Minimizer-4}.
\end{lemma}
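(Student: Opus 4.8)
The plan is to analyze the function $\Phi(t_1,\ldots,t_m)$ defined in \eqref{3-p-4}, splitting the maximization into the "old" variables $t_1,\ldots,t_p$ (attached to the fixed minimizer $\mathbf{u}^p$ for $c_{1,\ldots,p}$) and the "new" variables $t_{p+1},\ldots,t_m$ (attached to the truncated bubbles $\mathbf{v}_h^\varepsilon$). The strategy is to use the positive definiteness of $M_B(\mathbf{u}^\varepsilon)$ proved in Lemma \ref{Con-4-1} to guarantee that the maximum over $(t_1,\ldots,t_m)>0$ is attained at a unique finite point $\mathbf{t}^\varepsilon$, which stays in a bounded region (using $\kappa_{min}\geq\theta$ from \eqref{3-p-7-3} together with the uniform bounds of Lemma \ref{Pre-1} and Lemma \ref{Bounded}, so that each $t_h^\varepsilon\leq\widehat{t}$ with $\widehat t$ as in \eqref{eq:t_and_theta}). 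First I would write $\Phi$ as a sum of a quadratic-in-$\sqrt{t}$ part coming from the norms and a quartic part $-\frac14 M_B(\mathbf{u}^\varepsilon)\mathbf{t}\cdot\mathbf{t}$, and then isolate the cross terms of $M_B(\mathbf{u}^\varepsilon)$ between blocks $\{1,\ldots,p\}$ and $\{p+1,\ldots,m\}$: by \eqref{3-p-8} and the choice of $\Pi^2$ in \eqref{3-p-23}, these cross terms are $O(\varepsilon^2|\ln\varepsilon|)$-small relative to the separate block contributions (in fact bounded by a quantity controlled by $\Pi^2$ times $\widetilde C$), hence negligible after the maximization.

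The key computation is then the following comparison. If one drops the (negative-contributing, since $\beta_{ij}$ with $(i,j)\in\mathcal K_2$ may be positive but the relevant integrals are small) cross terms, $\Phi$ decouples: the maximum over $t_1,\ldots,t_p$ of the part involving only $\mathbf{u}^p$ is exactly $J_{1,\ldots,p}(\mathbf{u}^p)=c_{1,\ldots,p}$, because $\mathbf{u}^p\in\mathcal N_{1,\ldots,p}\cap\mathcal E_{1,\ldots,p}$ is a critical point of $J_{1,\ldots,p}$ (Lemma \ref{Lagrange}), so $t_k=1$ for all $k\le p$ is the maximizer of the decoupled old-block problem; and the maximum over $t_{p+1},\ldots,t_m$ of the bubble part is $<\sum_{h=p+1}^m l_h-\delta$ by exactly the estimate \eqref{2.15-3} already carried out in the proof of Theorem \ref{Energy Estimates} (the same $A_h^\varepsilon<B_h^\varepsilon$ trick applied to \eqref{2.12-1,2-m}--\eqref{2.14-1,2-m}, producing the same $\delta$ of \eqref{4-p-9}). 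I would then reinstate the cross terms and show that the perturbation they cause in the maximal value is $O(\varepsilon^2)$ — specifically $o(\varepsilon^2|\ln\varepsilon|)$ — by a Taylor expansion of the maximizer around the decoupled one, using that the Hessian of $\Phi$ at the max is nondegenerate (bounded below uniformly in $\varepsilon$ by $\kappa_{min}\geq\theta$) and that $\mathbf{t}^\varepsilon$ stays in a fixed compact set. Since $\delta\sim \varepsilon^2|\ln\varepsilon|$ dominates $O(\varepsilon^2)$, we keep a strict inequality with, say, $\delta$ replaced by $\delta/2$ after shrinking; but since $\delta$ was defined with the benign constant $\frac{1}{16}\min_h C^h$ (and the bound from Theorem \ref{Energy Estimates} was with $\frac14\min_h C^h\varepsilon^2|\ln\varepsilon|$ worth of slack, cf.\ \eqref{2.15-3}), a cleaner route is to absorb the $O(\varepsilon^2)$ cross-term error into that slack directly, keeping the displayed $\delta$. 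This yields \eqref{eq:maximum_estimate}.

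The main obstacle I expect is controlling the \emph{coupled} maximization cleanly: unlike in Theorem \ref{Energy Estimates}, where $M_B(\widehat{\mathbf V}^\varepsilon)$ was exactly diagonal and the maximum factored completely, here $M_B(\mathbf u^\varepsilon)$ is only strictly diagonally dominant with small off-diagonal entries, so one must either (i) estimate the Legendre-type transform $\frac14\sum (A_h^\varepsilon)^2/B_h^\varepsilon$ of a perturbed quadratic form — i.e. bound $\max_{\mathbf t>0}(\mathbf b\cdot\mathbf t - \frac14 M_B(\mathbf u^\varepsilon)\mathbf t\cdot\mathbf t)$ from above by its diagonal-$M_B$ counterpart plus a controlled error — or (ii) exploit that on the Nehari set the energy is $\frac14\sum_h\|\mathbf u_h^\varepsilon\|_h^2$ and bound block-by-block. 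Approach (ii) is the one that matches the subcritical argument of \cite{Tavares 2016-1}: on the constraint one uses $\sum_{i\in I_k}\|u_i^\varepsilon\|_i^2 = \sum_{l}M_B(\mathbf u^\varepsilon)_{kl}\le M_B(\mathbf u^\varepsilon)_{kk}+(\text{small cross terms})$, so that the old blocks satisfy the Nehari identity of $J_{1,\ldots,p}$ up to an $O(\Pi^2\widetilde C)$ error and the new blocks land in (a slight enlargement of) $\widetilde{\mathcal M}_h$, giving $\frac14\|\mathbf u_k^\varepsilon\|_k^2\ge l_k$ up to the same error; combined with \eqref{2.15-3} for the new blocks and $J_{1,\ldots,p}(\mathbf u^p)=c_{1,\ldots,p}$ for the old ones, the inequality follows once $\Pi^2$ is taken small enough — which is precisely why $\Pi^2$ was pinned down in \eqref{3-p-23} with those three competing upper bounds. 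I would therefore carry out the argument along (ii), making the $O(\varepsilon^2)$-vs-$\delta$ bookkeeping explicit at the end.
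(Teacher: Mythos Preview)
Your overall architecture---restrict to a bounded region $(0,\widehat t\,]^m$ via $\kappa_{min}\geq\theta$, then decouple into an ``old-block'' function $f(t_1,\ldots,t_p)$ with $\max f=c_{1,\ldots,p}$ and a ``new-block'' bubble function whose maximum is $<\sum_{h>p}l_h-\delta$---matches the paper's. But your treatment of the cross terms has a genuine gap. You assert that the cross-block contributions perturb the maximal value by $O(\varepsilon^2)$, i.e.\ $o(\varepsilon^2|\ln\varepsilon|)$, to be absorbed in the slack. This is wrong: the cross terms $\sum\beta_{ij}\int u_i^2(v_j^\varepsilon)^2$ are bounded by $\Pi^2\sum_j\int|v_j^\varepsilon|^2$, and in dimension four $\int|v_j^\varepsilon|^2\sim\varepsilon^2|\ln\varepsilon|$---the \emph{same} order as $\delta$. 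A Taylor expansion around the decoupled maximizer therefore yields an error \emph{comparable} to $\delta$, not smaller, and in the competitive case $\beta_{ij}<0$, $(i,j)\in\mathcal K_2$, the cross terms \emph{increase} $\Phi$. Without precise constant control your approach (i) fails as stated, and your approach (ii) does not lead to a clean decoupling either.

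The paper's fix is not a perturbation argument but a \emph{pointwise absorption} on the bounded region: the first bound on $\Pi^2$ in \eqref{3-p-23} is chosen so that $d\max_{\mathcal K_2}|\beta_{ij}|\,\widehat t\,\Pi^2\leq\tfrac12\min_i|\lambda_i|$, and hence, for $0<t_k\leq\widehat t$ and each $h>p$,
\[
\sum_{k=1}^{p}t_kt_h\Bigl|\sum_{(i,j)\in I_k\times I_h}\beta_{ij}\int_\Omega u_i^2(v_j^\varepsilon)^2\Bigr|
\;\leq\; -\tfrac12\,t_h\sum_{i\in I_h}\lambda_i\int_\Omega|v_i^\varepsilon|^2.
\]
This absorbs the cross terms into exactly half of the (negative) $\lambda$-part of $\|\mathbf v_h^\varepsilon\|_h^2$, giving $\Phi\leq f(t_1,\ldots,t_p)+g_\varepsilon(t_{p+1},\ldots,t_m)$ where $g_\varepsilon$ carries $\tfrac{\lambda_i}{2}|v_i^\varepsilon|^2$ in place of $\lambda_i|v_i^\varepsilon|^2$. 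The $A_h^\varepsilon<B_h^\varepsilon$ computation of Theorem~\ref{Energy Estimates} then runs verbatim with gain $\tfrac18C^h\varepsilon^2|\ln\varepsilon|$ instead of $\tfrac14C^h\varepsilon^2|\ln\varepsilon|$; since $\delta=\tfrac{1}{16}\min_hC^h\varepsilon^2|\ln\varepsilon|$ was defined with precisely this margin, the \emph{same} $\delta$ survives. That absorption step---not a higher-order perturbation bound---is the missing idea.
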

\begin{proof}[\bf{Proof}]
Recalling the definition of $\Phi$ (see \eqref{3-p-4}), it follows from Lemma \ref{Con-4-1} that
\begin{align}\label{3-p-4.2}
\Phi(t_{1},\cdots,t_{m})
 \leq\frac{1}{2}\sum_{k=1}^{p}t_{k}\|\mathbf{u}_{k}\|_{k}^{2}
+\sum_{h=p+1}^{m}\frac{1}{2}t_{h}\sum_{i\in I_{h}}\int_{\Omega}|\nabla v^{\varepsilon}_{i}|^{2}+\lambda_{i}| v^{\varepsilon}_{i}|^{2}\, dx-\frac{1}{4}\theta\sum_{k=1}^{m}t^{2}_{k}.
\end{align}
Similarly to \eqref{2.15} we infer that
\begin{align}\label{3-p-4.3}
\sum_{i\in I_{h}}\int_{\Omega}|\nabla v^{\varepsilon}_{i}|^{2}+\lambda_{i}| v^{\varepsilon}_{i}|^{2} \, dx< 4l_{h},~~ \forall h=p+1, \ldots, m,
\end{align}
for small $\varepsilon$. Note that
\begin{equation}\label{3-p-4.4}
\|\mathbf{u}_{k}\|_{k}^{2}\leq \sum_{h=1}^p\|\mathbf{u}_{h}\|_{h}^{2}=4c_{1,\ldots,p}\leq 4\overline{C}.
\end{equation}
We deduce from \eqref{3-p-4.2}, \eqref{3-p-4.3} and \eqref{3-p-4.4} that
\begin{align}\label{3-p-4-1}
\Phi(t_{1},\cdots,t_{m})&\leq  \sum_{k=1}^p 2t_k\overline C + \sum_{h=p+1}^m 2t_k l_k-  \sum_{k=1}^{m}\frac{1}{4}\theta t_k^2\\
				& \leq \sum_{k=1}^m \left(2t_k \max\{\overline C,l_{1},\ldots, l_m\}-\frac{1}{4}\theta t_k^2\right),
\end{align}
which yields that $\Phi(t_{1},\cdots,t_{m})<0$ when $t_{1},\ldots,t_{m}>\widehat{t}$, where
\begin{equation}\label{1,2,3-7-3}
\widehat{t}= \frac{8\max\{\overline{C},l_1,\ldots, l_m \}}{\theta}.
\end{equation}
Thus,
\begin{align}\label{1,2,3-7}
\max_{t_{1},\ldots,t_{m}>0}& J\left(\sqrt{t_{1}}\mathbf{u}_{1},\cdots, \sqrt{t_{p}}\mathbf{u}_{p},\sqrt{t_{p+1}}\mathbf{v}_{p+1}^{\varepsilon},\cdots,\sqrt{t_{m}}\mathbf{v}_{m}^{\varepsilon}\right)
 =\nonumber\\\
& \max_{0<t_{1},\ldots,t_{m} \leq \widehat{t}}J\left(\sqrt{t_{1}}\mathbf{u}_{1},\cdots, \sqrt{t_{p}}\mathbf{u}_{p},\sqrt{t_{p+1}}\mathbf{v}_{p+1}^{\varepsilon},\cdots,\sqrt{t_{m}}\mathbf{v}_{m}^{\varepsilon}\right).
\end{align}
By \eqref{3-p-23} and \eqref{3-p-16} we have, for $k=1,\ldots, p$, $h=p+1,\ldots,m$ and   $t_{h}>0$, $0<t_{k}<\widehat{t}$,
\begin{align}\label{1,2-11-m}
\sum_{k=1}^{p}t_{k}t_{h}\left|\sum_{(i,j)\in I_{k}\times I_{h}}\beta_{ij}
\int_{\Omega}|u_{i}|^{2}|v^{\varepsilon}_{j}|^{2}\right| &\leq d\max_{(i,j)\in \mathcal{K}_{2}}|\beta_{ij}|\widehat{t}\Pi^{2}t_{h}\sum_{i\in I_{h}}\int_{B_{2\rho}(x_{0})}|v^{\varepsilon}_{i}|^{2}\nonumber\\
& \leq -\frac{1}{2}t_{h}\sum_{i\in I_{h}}\lambda_{i}\int_{\Omega}|v^{\varepsilon}_{i}|^{2}
\end{align}
(recall that $\lambda_i<0$).
Hence, since $v_i^\varepsilon\cdot v_j^\varepsilon=0$ whenever $i\in I_h,j\in I_l$, $h\neq l$, $h,l\in \{p+1,\ldots, m\}$, we have that
\begin{align}\label{1,2,3-8}
J(&\sqrt{t_{1}}\mathbf{u}_{1},\cdots, \sqrt{t_{p}}\mathbf{u}_{p},\sqrt{t_{p+1}}\mathbf{v}_{p+1}^{\varepsilon},\cdots,\sqrt{t_{m}}\mathbf{v}_{m}^{\varepsilon}) =\frac{1}{2}\sum_{k=1}^{p}t_{k}\|\mathbf{u}_{k}\|_{k}^{2}
-\frac{1}{4}\sum_{k,l=1}^{p}t_{k}t_{l}\sum_{(i,j)\in I_{k}\times I_{l}}\int_{\Omega}\beta_{ij} u_{i}^{2}u_{j}^{2} \nonumber\\
&+\sum_{h=p+1}^{m}\left(\frac{t_{h}}{2}\|\mathbf{v}^{\varepsilon}_{h}\|_{h}^{2}-\frac{t_{h}^{2}}{4}\sum_{(i,j)\in I_{h}^{2}}\int_{\Omega}\beta_{ij}| v^{\varepsilon}_{i}|^{2}|v^{\varepsilon}_{j}|^{2}\right)
-\sum_{k=1}^{p}\sum_{h=p+1}^{m}\frac{t_{k}t_{h}}{2}\sum_{(i,j)\in I_{k}\times I_{h}}\int_{\Omega}\beta_{ij} u_{i}^{2}|v^{\varepsilon}_{j}|^{2}\nonumber\\
& \leq\frac{1}{2}\sum_{k=1}^{p}t_{k}\|\mathbf{u}_{k}\|_{k}^{2}
-\frac{1}{4}\sum_{k,l=1}^{p}t_{k}t_{l}\sum_{(i,j)\in I_{k}\times I_{l}}\int_{\Omega}\beta_{ij} u_{i}^{2}u_{j}^{2} \nonumber\\
& \quad +\sum_{h=p+1}^{m}\left(\frac{1}{2}t_{h}\sum_{i\in I_{h}}\int_{\Omega}|\nabla v^{\varepsilon}_{i}|^{2}+\frac{\lambda_{i}}{2}| v^{\varepsilon}_{i}|^{2}\, dx-\frac{1}{4}t_{h}^{2}\sum_{(i,j)\in I_{h}^{2}}\int_{\Omega}\beta_{ij}| v^{\varepsilon}_{i}|^{2}|v^{\varepsilon}_{j}|^{2}\right)\nonumber\\
& =: f(t_{1},\ldots,t_{p})+ g_\varepsilon(t_{p+1},\ldots,t_{m}).
\end{align}

We claim that
\begin{equation}\label{3-p-2}
\max_{t_{1},\cdots,t_{p}>0}f(t_{1},\cdots,t_{p})=f(1,\cdots,1)=J_{1,\cdots,p}(\mathbf{u}^{p})=c_{1,\cdots,p}.
\end{equation}
It follows from Lemma \ref{Positive Definite} that $M_{B}^{1,\cdots,p}(\mathbf{u}^{p})$ is positive definite. Then we see that $f$ is strictly concave and has a maximum point in $\overline{\mathbb{R}_{+}^p}$. Note that the point $\mathbf{1}=(1,\ldots,1)$ is a critical point of $f$ due to the fact that $\mathbf{u}^{p}\in \mathcal{N}_{1,\ldots,p}$. Moreover, it is easy to see that $f$ is of class $C^1$ in $\overline{\mathbb{R}_{+}^p}$. Therefore, we know that $\mathbf{1}$ is the unique critical point of $f$ and is a global maximum by strict concavity, which yields that \eqref{3-p-2} holds.

On the other hand, similarly to \eqref{2.15} we see that
\begin{equation}\label{3-p-3}
\max_{t_{p+1},\cdots,t_{m}>0}g_\varepsilon (t_{p+1},\cdots,t_{m})<\sum_{h=p+1}^{m}l_{h}-\delta,
\end{equation}
for the same $\delta$ as in \eqref{4-p-9}, by taking $\varepsilon$ sufficiently small.
It follows from \eqref{1,2,3-7}, \eqref{1,2,3-8}, \eqref{3-p-2} and \eqref{3-p-3} that
\begin{align}\label{3-p-3-6}
\max_{t_{1},\ldots,t_{m}>0}& J\left(\sqrt{t_{1}}\mathbf{u}_{1},\cdots, \sqrt{t_{p}}\mathbf{u}_{p},\sqrt{t_{p+1}}\mathbf{v}_{p+1}^{\varepsilon},\cdots,\sqrt{t_{m}}\mathbf{v}_{m}^{\varepsilon}\right)
 =\nonumber\\
& \max_{0<t_{1},\ldots,t_{m} \leq \widehat{t}}J\left(\sqrt{t_{1}}\mathbf{u}_{1},\cdots, \sqrt{t_{p}}\mathbf{u}_{p},\sqrt{t_{p+1}}\mathbf{v}_{p+1}^{\varepsilon},\cdots,\sqrt{t_{m}}\mathbf{v}_{m}^{\varepsilon}\right)\nonumber\\
& \leq \max_{t_{1},\cdots,t_{p}>0}f(t_{1},\cdots,t_{p})+\max_{t_{p+1},\cdots,t_{m}>0}g(t_{p+1},\cdots,t_{m})\nonumber\\
& <c_{1,\ldots,p}+\sum_{h=p+1}^{m}l_{h}-\delta.  \qedhere
\end{align}
\end{proof}

By the previous lemma, we know that $\Phi$ has a global maximum $\mathbf{t}^{\varepsilon}$ in $\overline{\mathbb{R}^{m}_{+}}$ for sufficiently small $\varepsilon>0$. From now on we fix such an $\varepsilon$. It is easy to see that $\partial_{k}\Phi(\mathbf{t}^{\varepsilon})\leq 0$ if $t^{\varepsilon}_{k}=0$ and $\partial_{k}\Phi(\mathbf{t}^{\varepsilon})= 0$ if $t^{\varepsilon}_{k}>0$. Moreover, for the latter case we have
\begin{equation}\label{3-p-12}
\|\mathbf{u}^{\varepsilon}_{k}\|_{k}^{2}=\sum_{h=1}^{m}M_{B}(\mathbf{u}^{\varepsilon})_{kh}t^{\varepsilon}_{h}, ~\text{ for } t^{\varepsilon}_{k}>0.
\end{equation}

Recall the definition of $\Lambda_{3}$ (see \eqref{Constant-4-1}). In the following we show that \eqref{3-p-3-4} holds.

\begin{lemma}\label{t-positive}
If
\begin{equation*}
\beta_{ij}\geq 0  \quad\forall (i,j)\in \mathcal{K}_{1} ~and~
-\infty<\beta_{ij}< \Lambda_{3} \quad\forall (i,j)\in \mathcal{K}_{2},
\end{equation*}
then there holds
\begin{equation*}
t^{\varepsilon}_{1},\cdots,t^{\varepsilon}_{k},\cdots, t^{\varepsilon}_{m}>0.
\end{equation*}
\end{lemma}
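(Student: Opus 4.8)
The strategy is to compare $\max_{\overline{\mathbb{R}^{m}_{+}}}\Phi$ with the maxima of $\Phi$ over the coordinate faces $\{t_{k_{0}}=0\}$: if some $t^{\varepsilon}_{k_{0}}$ were zero then $\mathbf{t}^{\varepsilon}$, lying in that face and realizing the global maximum, would also maximize $\Phi$ over it, and I will show this face-maximum lies strictly below a level that $\max\Phi$ already exceeds. First I would record a lower bound for $\max\Phi$ by evaluating at $\mathbf{t}=(1,\ldots,1)$. Since $\mathbf{u}^{p}\in\mathcal{N}_{1,\ldots,p}$ attains $c_{1,\ldots,p}$, the Nehari identity $J_{1,\ldots,p}(\mathbf{u}^{p})=\frac{1}{4}\sum_{k=1}^{p}\|\mathbf{u}_{k}\|_{k}^{2}=c_{1,\ldots,p}$ makes the ``old'' block contribute exactly $c_{1,\ldots,p}$; the estimates \eqref{2.12-1,2-m}, \eqref{2.13-1,2-m}, \eqref{2.14-1,2-m} make the $h$-th ``new'' block contribute $l_{h}+o(1)$; and since $\int_{\Omega}|v_{j}^{\varepsilon}|^{2}=O(\varepsilon^{2}|\ln\varepsilon|)$ while the $u_{i}$ stay bounded, the old--new cross terms are $o(1)$. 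Hence $\Phi(1,\ldots,1)=c_{1,\ldots,p}+\sum_{h=p+1}^{m}l_{h}+o(1)$, so $\max\Phi\ge c_{1,\ldots,p}+\sum_{h=p+1}^{m}l_{h}+o(1)$ as $\varepsilon\to 0$.

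Next, arguing by contradiction, assume $t^{\varepsilon}_{k_{0}}=0$ for some $k_{0}$. Restricting $\Phi$ to $\{t_{k_{0}}=0\}$ simply deletes the $k_{0}$-th block, so I would rerun the proof of Lemma \ref{Con-4-2} on this restriction, absorbing the surviving old--new cross terms into the $\lambda_{i}$-terms exactly as in \eqref{1,2-11-m}, and split the resulting bound into the ``old part'' $f$ of \eqref{3-p-2} and a ``new part'' $g_{\varepsilon}$. If $k_{0}>p$, then $\max_{\{t_{k_{0}}=0\}}\Phi\le\max f+\max g_{\varepsilon}^{(k_{0})}<c_{1,\ldots,p}+\sum_{h=p+1,\,h\neq k_{0}}^{m}l_{h}-\delta$, using $\max f=c_{1,\ldots,p}$ from \eqref{3-p-2} and the estimate \eqref{3-p-3} applied to the reduced family of bubbles; this equals $c_{1,\ldots,p}+\sum_{h=p+1}^{m}l_{h}-l_{k_{0}}-\delta$. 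If $k_{0}\le p$, then $\max_{\{t_{k_{0}}=0\}}\Phi\le\widetilde{c}+\sum_{h=p+1}^{m}l_{h}-\delta$ with $\widetilde{c}:=\max\{f(\mathbf{t}):\mathbf{t}\in\overline{\mathbb{R}^{p}_{+}},\ t_{k_{0}}=0\}$; since $\mathbf{u}^{p}\in\mathcal{N}_{1,\ldots,p}$ gives $\nabla f(\mathbf{1})=0$, one has $f(\mathbf{t})=c_{1,\ldots,p}-\frac{1}{4}M_{B}^{1,\ldots,p}(\mathbf{u}^{p})(\mathbf{t}-\mathbf{1})\cdot(\mathbf{t}-\mathbf{1})$, and because $M_{B}^{1,\ldots,p}(\mathbf{u}^{p})$ is strictly diagonally dominant with (diagonal entry)$-$(off-diagonal row sum)$\ge\frac{S}{2}C_{1}$ by \eqref{Matrix-2} (recall $\mathbf{u}^{p}\in\mathcal{E}_{1,\ldots,p}$, Lemma \ref{Positive Definite}), the Gershgorin circle theorem bounds its eigenvalues below by $\frac{S}{2}C_{1}$, so freezing one coordinate of $\mathbf{t}$ at $0$ (a hyperplane at distance $1$ from $\mathbf{1}$) costs at least $\frac{1}{4}\cdot\frac{S}{2}C_{1}$; hence $\widetilde{c}\le c_{1,\ldots,p}-\frac{S}{8}C_{1}$. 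In either case $\max_{\{t_{k_{0}}=0\}}\Phi<c_{1,\ldots,p}+\sum_{h=p+1}^{m}l_{h}-\eta-\delta$, where $\eta:=\min\{\min_{h>p}l_{h},\,\frac{S}{8}C_{1}\}>0$ is a fixed constant independent of $\varepsilon$.

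Combining $\max\Phi=\Phi(\mathbf{t}^{\varepsilon})\le\max_{\{t_{k_{0}}=0\}}\Phi$ with the lower bound from the first step gives $c_{1,\ldots,p}+\sum_{h>p}l_{h}+o(1)<c_{1,\ldots,p}+\sum_{h>p}l_{h}-\eta-\delta$, i.e.\ $o(1)<-\eta-\delta$, which is absurd for $\varepsilon$ small. Hence $t^{\varepsilon}_{1},\ldots,t^{\varepsilon}_{m}>0$, which together with \eqref{3-p-12} yields \eqref{3-p-3-4}. The main obstacle is the case $k_{0}\le p$: one must quantify precisely how much the concave quadratic $f$ drops when one of its arguments is set to $0$, which is controlled by a quantitative lower bound on the smallest eigenvalue of $M_{B}^{1,\ldots,p}(\mathbf{u}^{p})$ — this is why the diagonal-dominance margin $\frac{S}{2}C_{1}$ recorded in \eqref{Matrix-2} is essential — and one must make sure that the losses $l_{k_{0}}$ and $\frac{S}{8}C_{1}$ are fixed positive numbers while the gain $o(1)$ in the test-point estimate and the parameter $\delta$ both vanish as $\varepsilon\to 0$, so that the contradiction is genuine; one also has to redo the book-keeping of Lemma \ref{Con-4-2} (the splitting into $f$ and $g_{\varepsilon}$ and the absorption \eqref{1,2-11-m}) on each face.
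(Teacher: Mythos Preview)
Your argument is correct but takes a genuinely different route from the paper's. The paper argues \emph{locally}: it first derives the a priori bound $\sum_{k}t_{k}^{\varepsilon}\|\mathbf{u}_{k}^{\varepsilon}\|_{k}^{2}\le 4(\sum_{k}l_{k}-2\delta)$, then, assuming $t_{1}^{\varepsilon}=0$, it shows directly that the one-variable function $t_{1}\mapsto\Phi(t_{1},t_{2}^{\varepsilon},\ldots,t_{m}^{\varepsilon})$ is strictly increasing at $t_{1}=0^{+}$, using the smallness condition $\beta_{ij}<\Lambda_{2}$ for $(i,j)\in\mathcal{K}_{2}$ to control the cross terms $\sum_{k\ge 2}M_{B}(\mathbf{u}^{\varepsilon})_{1k}t_{k}^{\varepsilon}$ against $\|\mathbf{u}_{1}^{\varepsilon}\|_{1}^{2}$. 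This immediately contradicts maximality.

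Your approach is \emph{global}: you sandwich $\max\Phi$ between a lower bound coming from the test point $\mathbf{t}=(1,\ldots,1)$ and an upper bound on each coordinate face obtained by rerunning Lemma~\ref{Con-4-2} there, quantifying in the case $k_{0}\le p$ the drop of the concave quadratic $f$ via the Gershgorin bound on the eigenvalues of $M_{B}^{1,\ldots,p}(\mathbf{u}^{p})$. This is more work (you must redo the $f$--$g_{\varepsilon}$ splitting on each face and check the $\hat t$-localization there, which goes through since principal submatrices of $M_{B}(\mathbf{u}^{\varepsilon})$ inherit the eigenvalue lower bound $\theta$), but it has an interesting payoff: your contradiction $o(1)<-\eta-\delta$ is driven by the fixed constant $\eta=\min\{\min_{h>p}l_{h},\tfrac{S}{8}C_{1}\}$, and the ingredients you invoke (Lemma~\ref{Con-4-2}, \eqref{Matrix-2}, \eqref{3-p-3}) all sit under the hypothesis $\beta_{ij}<\Lambda_{1}$, so your route does not actually need the extra threshold $\Lambda_{2}$ that the paper introduces precisely for its local argument. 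Two minor points: in the edge case $m=p+1$, $k_{0}=m$, the face carries no bubble and the ``$-\delta$'' from \eqref{3-p-3} is absent, but your bound $\max_{\{t_{m}=0\}}\Phi=c_{1,\ldots,p}$ together with the lower bound still gives $l_{m}+o(1)\le 0$, which is the contradiction; and your remark that ``$\delta$ vanishes as $\varepsilon\to 0$'' is immaterial since $\eta>0$ alone already closes the argument.
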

\begin{proof}[\bf{Proof}]
Firstly, we claim that
\begin{equation}\label{3-p-13}
\sum_{k=1}^{m}t^{\varepsilon}_{k}\|\mathbf{u}^{\varepsilon}_{k}\|_{k}^{2}\leq 4\left(\sum_{k=1}^{m}l_{k}-2\delta\right).
\end{equation}
We deduce from \eqref{3-p-12} that for $t^{\varepsilon}_{k}>0$ we have
\begin{equation}\label{3-p-12-2}
\|\mathbf{u}^{\varepsilon}_{k}\|_{k}^{2}t^{\varepsilon}_{k}=\sum_{h=1}^{m}M_{B}(\mathbf{u}^{\varepsilon})_{kh}t^{\varepsilon}_{h}t^{\varepsilon}_{k}, \end{equation}
and clearly \eqref{3-p-12-2} also holds for $t^{\varepsilon}_{k}=0$. Hence, we know that \eqref{3-p-12-2} is true for $k=1,\ldots, m$. It follows that
\begin{equation}\label{3-p-14-2}
J(\sqrt{t_{1}^{\varepsilon}}\mathbf{u}_{1},\cdots,\sqrt{t_{p}^{\varepsilon}}\mathbf{u}_{p},\cdots,\sqrt{t_{m}^{\varepsilon}}\mathbf{v}_{m}^{\varepsilon})
= \frac{1}{4}\sum_{k=1}^{m}\|\mathbf{u}^{\varepsilon}_{k}\|_{k}^{2}t^{\varepsilon}_{k}.
\end{equation}
Note that $c_{1,\ldots,p}<\sum_{k=1}^{p}l_{k}-\delta$ by Theorem \ref{Energy Estimates}. Combining this with \eqref{eq:maximum_estimate} we see that
\begin{equation}\label{3-p-14}
J(\sqrt{t_{1}^{\varepsilon}}\mathbf{u}_{1},\cdots,\sqrt{t_{p}^{\varepsilon}}\mathbf{u}_{p},\cdots,\sqrt{t_{m}^{\varepsilon}}\mathbf{v}_{m}^{\varepsilon})
\leq \sup_{t_{1},\ldots,t_{m}\geq 0}J(\sqrt{t_{1}}\mathbf{u}_{1},\cdots,\sqrt{t_{p}}\mathbf{u}_{p},\cdots,\sqrt{t_{m}}\mathbf{v}_{m}^{\varepsilon})\leq\sum_{k=1}^{m}l_{k}-2\delta.
\end{equation}
We infer from \eqref{3-p-14-2} and \eqref{3-p-14} that \eqref{3-p-13} holds.

By contradiction, we suppose that $t^{\varepsilon}_{1}=0$, and the maximum point has the form $(0, t_{2}^{\varepsilon},\cdots,t_{m}^{\varepsilon})$. Consider the function
\begin{align}\label{3-p-19}
\varphi(t_1):=J(\sqrt{t_{1}}\mathbf{u}_{1}^{\varepsilon},\cdots,\sqrt{t_{m}^{\varepsilon}}\mathbf{u}_{m}^{\varepsilon})   &=\frac{1}{2}t_{1}\|\mathbf{u}_{1}^{\varepsilon}\|_{1}^{2}-\frac{1}{2}\sum_{k=2}^{m}M_{B}(\mathbf{u}^{\varepsilon})_{1k}t_{1}t^{\varepsilon}_{k}-\frac{1}{4}M_{B}(\mathbf{u}^{\varepsilon})_{11}t_{1}^{2}\nonumber\\ & \quad + \frac{1}{2}\sum_{k=2}^{m}t_{k}^{\varepsilon}\|\mathbf{u}_{k}^{\varepsilon}\|_{k}^{2}-\frac{1}{4}\sum_{k,h=2}^{m}M_{B}(\mathbf{u}^{\varepsilon})_{kh}t_{k}^{\varepsilon}t_{h}^{\varepsilon}.
\end{align}
By assumption we have that
\begin{align*}
\sum_{k=2}^{m}M_{B}(\mathbf{u}^{\varepsilon})_{1k}t_{k}^{\varepsilon}&=\sum_{k=2}^{m}t_{k}^{\varepsilon}\sum_{(i,j)\in I_{1}\times I_{k}}\int_{\Omega}\beta_{ij}| u^{\varepsilon}_{i}|^{2}|u^{\varepsilon}_{j}|^{2}\leq \frac{\Lambda_{2}}{S^{2}}\sum_{k=2}^{m}t_{k}^{\varepsilon}\sum_{(i,j)\in I_{1}\times I_{k}}\|u^{\varepsilon}_{i}\|_{i}^{2}\|u^{\varepsilon}_{j}\|_{j}^{2}\\
& \leq \frac{\Lambda_{2}}{S^{2}}\|\mathbf{u}_{1}^{\varepsilon}\|_{1}^{2}\sum_{k=2}^{m}t_{k}^{\varepsilon}\|\mathbf{u}^{\varepsilon}_{k}\|_{k}^{2}
\leq \frac{4\Lambda_{2}\left(\sum_{k=1}^{m}l_{k}-2\delta\right)}{S^{2}}\|\mathbf{u}_{1}^{\varepsilon}\|_{1}^{2}\leq \frac{1}{4}\|\mathbf{u}_{1}^{\varepsilon}\|_{1}^{2}.
\end{align*}
Combining this with \eqref{3-p-19} we have
\begin{align}\label{3-p-20}
\frac{1}{2}&t_{1}\|\mathbf{u}_{1}^{\varepsilon}\|_{1}^{2}-\frac{1}{2}\sum_{k=2}^{m}M_{B}(\mathbf{u}^{\varepsilon})_{1k}t_{1}t^{\varepsilon}_{k}-\frac{1}{4}M_{B}(\mathbf{u}^{\varepsilon})_{11}t_{1}^{2}\nonumber\\   &=\frac{1}{2}t_{1}\left(\|\mathbf{u}_{1}^{\varepsilon}\|_{1}^{2}-\sum_{k=2}^{m}M_{B}(\mathbf{u}^{\varepsilon})_{1k}t^{\varepsilon}_{k}-\frac{1}{2}M_{B}(\mathbf{u}^{\varepsilon})_{11}t_{1}\right)
 \geq\frac{1}{2}t_{1}\left(\frac{3}{4}\|\mathbf{u}_{1}^{\varepsilon}\|_{1}^{2}- \frac{1}{2}M_{B}(\mathbf{u}^{\varepsilon})_{11}t_{1}\right)>0,
\end{align}
for $t_{1}>0$ small enough. It follows from \eqref{3-p-19} and \eqref{3-p-20} that
\begin{equation*}
J\left(\sqrt{t_{1}}\mathbf{u}_{1}^{\varepsilon},\sqrt{t_{2}^{\varepsilon}}\mathbf{u}_{2}^{\varepsilon},\cdots,\sqrt{t_{m}^{\varepsilon}}\mathbf{u}_{m}^{\varepsilon}\right)
>J\left(0,\sqrt{t_{2}^{\varepsilon}}\mathbf{u}_{2}^{\varepsilon},\cdots,\sqrt{t_{m}^{\varepsilon}}\mathbf{u}_{m}^{\varepsilon}\right),
\end{equation*}
which is a contradiction.
\end{proof}

\begin{proof}[\bf Proof of Theorem \ref{Energy Estimation-1,2,m-1}.]
Without loss of generality, we prove that $c<c_{1,\ldots,p}+\sum_{h=p+1}^{m}l_{h}-\delta$.
By Lemma \ref{t-positive}, there exists $t_{1}^{\varepsilon},\ldots,t_{m}^{\varepsilon}>0$ such that
$$
\left(\sqrt{t_{1}^{\varepsilon}}\mathbf{u}_{1},\cdots,\sqrt{t_{p}^{\varepsilon}}\mathbf{u}_{p}, \sqrt{t_{p+1}^{\varepsilon}}\mathbf{v}_{p+1}^{\varepsilon},\cdots,\sqrt{t_{m}^{\varepsilon}}\mathbf{v}_{m}^{\varepsilon}\right)\in \mathcal{N}.
$$
It follows from Lemma \ref{Con-4-2} that
\begin{align*}
c &\leq J\left(\sqrt{t_{1}^{\varepsilon}}\mathbf{u}_{1},\cdots,\sqrt{t_{p}^{\varepsilon}}\mathbf{u}_{p},\sqrt{t_{p+1}^{\varepsilon}}\mathbf{v}_{p+1}^{\varepsilon},\cdots,\sqrt{t_{m}^{\varepsilon}}\mathbf{v}_{m}^{\varepsilon}\right)\\
&\leq
\max_{t_{1},\ldots,t_{m}>0}J(\sqrt{t_{1}}\mathbf{u}_{1},\cdots,\sqrt{t_{p}^{\varepsilon}}\mathbf{u}_{p},\sqrt{t_{p+1}}\mathbf{v}_{p+1}^{\varepsilon},\cdots,\sqrt{t_{m}}\mathbf{v}_{m}^{\varepsilon}) <c_{1,\ldots,p}+\sum_{h=p+1}^{m}l_{h}-\delta. \qedhere
\end{align*}
\end{proof}

\begin{remark}\label{5.1}
We mention that Theorem \ref{Energy Estimation-1,2,m-1} also holds for $d=m=2$, where in this case it gives the estimate:
\begin{equation}\label{eq:accurateestimate}
c<\min\{c_1+l_2-\delta,c_2+l_1-\delta\},
\end{equation}
where $\delta$ is independent of $\beta:=\beta_{12}=\beta_{21}$. Observe that $l_{i}=\frac{1}{4}\beta_{ii}^{-1}\widetilde{S}^{2}, i=1,2$, where $\widetilde{S}$ is the Sobolev best constant. This accurate estimate allows us to answer a question which was left open in \cite{Zou 2012}, where the authors dealt with the case $d=m=2$. Indeed, let us recall the following statement from that paper:
\begin{thm}\cite[Theorem 1.4]{Zou 2012}
Assume that $-\lambda_1(\Omega)<\lambda_1\leq \lambda_2<0$. Let $\beta_{n}<0, n\in \mathbb{N}$ satisfy $\beta_{n}\rightarrow -\infty$ as $n\rightarrow \infty$, and let $(u_n,v_n)$ be a least energy positive solution of \eqref{S-system} with $\beta_{12}=\beta_{21}=\beta_{n}$. Then $\int_{\Omega}\beta_{n}u_n^2v_n^2\rightarrow 0$ as $n\rightarrow \infty$ and, passing to a subsequence, one of the following conclusions holds.
\begin{itemize}
  \item [(1)] $u_n\rightarrow u_{\infty}$ strongly in $H_0^1(\Omega)$ and $v_n\rightharpoonup 0$ weakly in $H_0^1(\Omega)$, where $u_{\infty}$ is a least energy positive solution of
      \begin{equation}
      -\Delta u+\lambda_{1}u=\beta_{11}u^3, \qquad u\in H_0^1(\Omega).
      \end{equation}
  \item [(2)] $v_n\rightarrow v_{\infty}$ strongly in $H_0^1(\Omega)$ and $u_n\rightharpoonup 0$ weakly in $H_0^1(\Omega)$, where $v_{\infty}$ is a least energy positive solution of
      \begin{equation}
      -\Delta v+\lambda_{2}v=\beta_{22}v^3, \qquad v\in H_0^1(\Omega).
      \end{equation}
  \item [(3)] $(u_n,v_n)\rightarrow (u_{\infty},v_{\infty})$ strongly in $H_0^1(\Omega)\times H_0^1(\Omega)$ and $u_{\infty}\cdot v_{\infty}\equiv 0$, where $u_{\infty}\in C(\overline{\Omega})$ is a least energy positive solution of
      \begin{equation}
      -\Delta u+\lambda_{1}u=\beta_{11}u^3, \qquad u\in H_0^1(\{u_{\infty}>0\}),
      \end{equation}
      and $v_{\infty}\in C(\overline{\Omega})$ is a least energy positive solution of
      \begin{equation}
      -\Delta v+\lambda_{2}v=\beta_{22}v^3, \qquad v\in H_0^1(\{v_{\infty}>0\}).
      \end{equation}
      Furthermore, both $\{u_{\infty}>0\}$ and $\{v_{\infty}>0\}$ are connected domains, and $\{v_{\infty}>0\}=\Omega\backslash\overline{\{u_{\infty}>0\}}$.
\end{itemize}
\end{thm}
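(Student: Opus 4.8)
The plan is to adapt the classical strong‑competition / phase‑separation argument to the critical dimension $N=4$, in the spirit of Brezis--Nirenberg and of \cite{Zou 2012}. Write $J_{n}$ for the energy with $\beta_{12}=\beta_{21}=\beta_{n}$ and recall that on any solution $J_{n}(u_{n},v_{n})=\tfrac{1}{4}(\|u_{n}\|_{1}^{2}+\|v_{n}\|_{2}^{2})=:c_{n}$, the least energy positive level. \emph{Step 1 (uniform energy bounds).} For the upper bound, fix a smooth decomposition $\Omega=\overline{\omega_{1}}\cup\overline{\omega_{2}}$ into disjoint open sets, let $W_{i}$ be a positive ground state of $-\Delta W+\lambda_{i}W=\beta_{ii}W^{3}$ on $\omega_{i}$ (it exists since $\lambda_{i}>-\lambda_{1}(\Omega)\ge-\lambda_{1}(\omega_{i})$), extend by zero, and note that $(W_{1},W_{2})$ lies in the Nehari set for every $n$ because $W_{1}W_{2}\equiv0$; hence $c_{n}\le c_{1}(\omega_{1})+c_{2}(\omega_{2})$, where $c_{i}(\omega)$ is the ground‑state level of that Brezis--Nirenberg problem on $\omega$ and $c_{i}:=c_{i}(\Omega)$. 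Minimizing over decompositions gives $c_{n}\le\overline c:=\inf_{\omega_{1},\omega_{2}}(c_{1}(\omega_{1})+c_{2}(\omega_{2}))$, and degenerate decompositions give $\overline c\le\min\{c_{1}+l_{2},\,c_{2}+l_{1}\}$. For the lower bound, the Nehari identities together with $\beta_{n}<0$ give $\|u_{n}\|_{1}^{2}\le\beta_{11}|u_{n}|_{4}^{4}$, hence $\|u_{n}\|_{1}^{2}\ge S^{2}/\beta_{11}$ by \eqref{Constant-1}, and likewise for $v_{n}$; thus $c_{n}\ge\tfrac{S^{2}}{4}(\beta_{11}^{-1}+\beta_{22}^{-1})>0$. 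In particular $\|u_{n}\|_{1}^{2}+\|v_{n}\|_{2}^{2}=4c_{n}\le4\overline c$, so $(u_{n},v_{n})$ is bounded in $H_{0}^{1}(\Omega)^{2}$.

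\emph{Step 2 (vanishing interaction).} From Nehari again, $0\le\beta_{11}|u_{n}|_{4}^{4}-\|u_{n}\|_{1}^{2}=|\beta_{n}|\int_{\Omega}u_{n}^{2}v_{n}^{2}$; the left side is bounded by Step 1, so $\int_{\Omega}u_{n}^{2}v_{n}^{2}=O(|\beta_{n}|^{-1})\to0$. Passing to a subsequence, $u_{n}\rightharpoonup u_{\infty}$ and $v_{n}\rightharpoonup v_{\infty}$ weakly in $H_{0}^{1}$, strongly in $L^{p}$ for $p<4$, and a.e.; Fatou then forces $\int_{\Omega}u_{\infty}^{2}v_{\infty}^{2}=0$, i.e. $u_{\infty}v_{\infty}\equiv0$. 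The sharper statement $\int_{\Omega}\beta_{n}u_{n}^{2}v_{n}^{2}\to0$ (not merely boundedness of $|\beta_{n}|\int u_{n}^{2}v_{n}^{2}$) comes out of the compactness analysis below, once $c_{n}\to\overline c$ is known and the $L^{4}$-masses are accounted for by the weak limits plus at most one bubble.

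\emph{Step 3 (compactness and the trichotomy) — the main obstacle.} This is the crux, precisely because $H_{0}^{1}(\Omega)\hookrightarrow L^{4}(\Omega)$ is noncompact when $N=4$, so $L^{4}$-mass can escape by concentration. I would first establish $c_{n}\to\overline c$: the bound $c_{n}\le\overline c$ is Step 1, and $\liminf c_{n}\ge\overline c$ follows from weak lower semicontinuity of the norms once one checks that $u_{\infty},v_{\infty}$ solve the expected Brezis--Nirenberg problems on $\{u_{\infty}>0\}$, $\{v_{\infty}>0\}$ — the delicate point being passage to the limit in $\beta_{n}u_{n}v_{n}^{2}$, done via local elliptic estimates and the disjointness $u_{\infty}v_{\infty}\equiv0$. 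Then apply a Brezis--Lieb decomposition to $u_{n}=u_{\infty}+r_{n}$ and to $v_{n}$ separately: any nonvanishing remainder must carry at least the energy $l_{1}=\tfrac{1}{4}\widetilde{S}^{2}\beta_{11}^{-1}$ of a standard bubble (resp. $l_{2}$). Since the total energy is $c_{n}\to\overline c\le\min\{c_{1}+l_{2},c_{2}+l_{1}\}$, at most one bubble can survive, and only on the component whose weak limit vanishes. This produces exactly: no bubble — then $(u_{n},v_{n})\to(u_{\infty},v_{\infty})$ strongly with $u_{\infty},v_{\infty}\not\equiv0$, case (3); a bubble for $v_{n}$ with $v_{\infty}\equiv0$ and $u_{n}\to u_{\infty}$ a ground state on $\Omega$, case (1); symmetrically, case (2).

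\emph{Step 4 (structure of the segregated limit, and a closing remark).} In case (3), $u_{\infty},v_{\infty}>0$ on disjoint open sets $\Omega_{u}:=\{u_{\infty}>0\}$, $\Omega_{v}:=\{v_{\infty}>0\}$; that each is a \emph{least energy} positive solution of its Brezis--Nirenberg problem, that $\Omega_{u},\Omega_{v}$ are connected, and that $\Omega_{v}=\Omega\setminus\overline{\Omega_{u}}$, all follow from the minimality defining $c_{n}$: any failure lets one build, for large $n$, a positive competitor for \eqref{S-system} with coupling $\beta_{n}$ of strictly smaller energy (replace $u_{\infty}$ by a ground state on a slightly enlarged or reconnected domain, glue with cut‑offs, and use $\int\beta_{n}u_{n}^{2}v_{n}^{2}\to0$ to absorb the coupling term), contradicting the definition of $c_{n}$; continuity $u_{\infty},v_{\infty}\in C(\overline{\Omega})$ is standard elliptic regularity on $\Omega_{u},\Omega_{v}$. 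The hard part throughout is Step 3: controlling the critical $L^{4}$-mass so that compactness fails only through a single bubble and tying the surviving bubble to a vanishing weak limit. Finally, I note that the accurate estimate \eqref{eq:accurateestimate}, $c_{n}<\min\{c_{1}+l_{2},c_{2}+l_{1}\}-\delta$ with $\delta>0$ independent of $n$, forces $\overline c=\lim c_{n}<\min\{c_{1}+l_{2},c_{2}+l_{1}\}$, whereas cases (1) and (2) would require $\overline c=c_{1}+l_{2}$, resp. $c_{2}+l_{1}$; hence only case (3) occurs, which answers the question left open in \cite{Zou 2012}.
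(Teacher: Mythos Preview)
This statement is not a result of the present paper: it is quoted verbatim from \cite[Theorem~1.4]{Zou 2012} inside Remark~\ref{5.1}, and no proof is given here. The only contribution the paper makes regarding this theorem is the closing observation that the uniform estimate \eqref{eq:accurateestimate}, namely $c<\min\{c_{1}+l_{2},\,c_{2}+l_{1}\}-\delta$ with $\delta$ independent of $\beta$, combined with \cite[Remark~6.1]{Zou 2012}, rules out alternatives~(1) and~(2), so that only~(3) can occur. Your final paragraph captures exactly that argument, and that part matches the paper.

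Everything in your Steps~1--4 preceding that remark is therefore an attempt to reprove \cite[Theorem~1.4]{Zou 2012} itself, which lies outside the scope of this paper. A few comments on that attempt: your outline is along the right lines (uniform Nehari bounds, vanishing interaction, Brezis--Lieb decomposition with bubble counting), but Step~3 as written has a structural gap. You propose to first prove $c_{n}\to\overline{c}$ and then deduce the trichotomy from it; however, the lower bound $\liminf c_{n}\ge\overline{c}$ essentially \emph{is} the trichotomy --- it requires knowing that the limiting configuration realises one of the three optimal partitions, which is precisely what you are trying to establish. In \cite{Zou 2012} the argument runs the other way: one analyses the defect of compactness directly (via Lions--Struwe type profile decomposition and the Nehari constraints) and the identification of $\lim c_{n}$ is a consequence, not an input. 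Likewise, your claim $\overline{c}\le\min\{c_{1}+l_{2},\,c_{2}+l_{1}\}$ via ``degenerate decompositions'' is heuristic: as a subdomain shrinks, one needs the Brezis--Nirenberg level on it to converge to the bubble energy $l_{i}$, which is a separate (though standard) asymptotic estimate.

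In short: for the purposes of this paper, only your last three sentences are relevant, and they agree with what the paper does.
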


The question left open in \cite{Zou 2012} was whether $(1)$ and $(2)$ could actually happen. Combining \cite[Remark 6.1]{Zou 2012} with \eqref{eq:accurateestimate} we can show that actually $(1)$ and $(2)$ \emph{cannot} happen, therefore $(3)$ always holds.

\end{remark}

\subsection{Construction of Palais-Smale sequence}\label{subsec3.3}

In this subsection, we construct a Palais-Smale sequence at level $c_\Gamma$.
Recalling that $\Lambda_{1}=S^{2}/(32\overline{C})$ (see \eqref{Constant-6}) we have the following result.
\begin{prop}\label{PS Sequence}
Assume that
\begin{equation*}
\beta_{ij}\geq 0  \quad\forall (i,j)\in \mathcal{K}_{1} ~and~
-\infty<\beta_{ij}< \Lambda_{1} \quad\forall (i,j)\in \mathcal{K}_{2}.
\end{equation*}
Then there exists a sequence $\{\mathbf{u}_{n}\}\subset\mathcal{N}_\Gamma$ satisfying
\begin{equation}\label{Inequality16-1}
\lim_{n\rightarrow\infty}J_\Gamma(\mathbf{u}_{n})=c_\Gamma,\quad \lim_{n\rightarrow\infty}J_\Gamma'(\mathbf{u}_{n})=0.
\end{equation}
\end{prop}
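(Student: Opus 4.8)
The plan is to obtain the sequence from Ekeland's variational principle applied to the constrained problem $c_\Gamma=\inf_{\mathcal N_\Gamma}J_\Gamma$, after checking that near the minimizing set $\mathcal N_\Gamma$ is a complete $C^1$-Finsler manifold on which the constraint is \emph{natural}, i.e. a Palais--Smale sequence for $J_\Gamma\big|_{\mathcal N_\Gamma}$ is automatically a Palais--Smale sequence for $J_\Gamma$ on the whole space $H^1_0(\Omega;\mathbb R^{d_\Gamma})$. Throughout I would write $G_h(\mathbf u):=\sum_{i\in I_h}\partial_iJ_\Gamma(\mathbf u)u_i=\|\mathbf u_h\|_h^2-\sum_{k\in\Gamma}\bigl(M_B^\Gamma(\mathbf u)\bigr)_{hk}$, so that $\mathcal N_\Gamma=\{\mathbf u:\ \mathbf u_h\neq\mathbf 0\ \text{and}\ G_h(\mathbf u)=0\ \text{for all }h\in\Gamma\}$, which is nonempty by the explicit construction in the proof of Lemma \ref{Pre-1}, and $\|\cdot\|_\ast$ for the dual norm.

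First I would localize. Since $c_\Gamma\le\overline C$ by Lemma \ref{Pre-1}, it is enough to work on $\mathcal A:=\{\mathbf u\in\mathcal N_\Gamma:\ J_\Gamma(\mathbf u)\le 2\overline C\}$, which contains every minimizing sequence from some index on. On $\mathcal A$ one has $\sum_{h\in\Gamma}\|\mathbf u_h\|_h^2=4J_\Gamma(\mathbf u)\le 8\overline C$, so Lemma \ref{Positive Definite} puts $\mathbf u$ in $\mathcal E_\Gamma$ and makes $M_B^\Gamma(\mathbf u)$ positive definite, while Lemma \ref{Bounded} gives $\sum_{i\in I_h}|u_i|_4^2\ge C_1$ for each $h\in\Gamma$. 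Since this lower bound is preserved under strong convergence and forbids any block from degenerating, $\mathcal A$ is closed in $H^1_0(\Omega;\mathbb R^{d_\Gamma})$, hence complete. To see that $\mathcal A$ sits inside a $C^1$ submanifold of codimension $|\Gamma|$, I would differentiate $\mathbf t\mapsto\bigl(G_h(\sqrt{t_1}\mathbf u_1,\ldots,\sqrt{t_{|\Gamma|}}\mathbf u_{|\Gamma|})\bigr)_{h\in\Gamma}$ at $\mathbf t=(1,\ldots,1)$; using $G_h(\mathbf u)=0$, the Jacobian comes out equal to $-M_B^\Gamma(\mathbf u)$, which is nonsingular on $\mathcal A$, so $(G_h)_{h\in\Gamma}$ is a submersion there.

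Next, Ekeland's variational principle on the complete space $\mathcal A$ produces $\mathbf u_n\in\mathcal A$ with $J_\Gamma(\mathbf u_n)\to c_\Gamma$; since $\mathcal N_\Gamma$ is a $C^1$ manifold near these points, the Lagrange multiplier rule yields multipliers $\mu^n=(\mu^n_h)_{h\in\Gamma}\in\mathbb R^{|\Gamma|}$ with $\varepsilon_n:=\bigl\|J_\Gamma'(\mathbf u_n)-\sum_{h\in\Gamma}\mu^n_hG_h'(\mathbf u_n)\bigr\|_{\ast}\to 0$. The standard trick is then to test this asymptotic identity against the ``radial'' vector $\mathbf w^{n,k}$ which equals $\bigl((u_n)_i\bigr)_{i\in I_k}$ on the $k$-th block and $\mathbf 0$ on the others: since $J_\Gamma'(\mathbf u_n)\mathbf w^{n,k}=G_k(\mathbf u_n)=0$ and a direct computation (using the Nehari relation for the diagonal term $h=k$) gives $G_h'(\mathbf u_n)\mathbf w^{n,k}=-2\bigl(M_B^\Gamma(\mathbf u_n)\bigr)_{hk}$, together with $\|\mathbf w^{n,k}\|\le\sqrt{8\overline C}$ one ends up with $\bigl\|M_B^\Gamma(\mathbf u_n)\,\mu^n\bigr\|_\infty\le C\,\varepsilon_n$, with $C$ depending only on $\overline C$.

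The hard part — the actual content of the statement — is then to deduce $\mu^n\to 0$, i.e. to bound $\bigl\|\bigl(M_B^\Gamma(\mathbf u_n)\bigr)^{-1}\bigr\|$ \emph{uniformly} in $n$. Here I would cash in the hypothesis $\beta_{ij}<\Lambda_1=S^2/(32\overline C)$ for $(i,j)\in\mathcal K_2$: combining the Nehari relation $\bigl(M_B^\Gamma(\mathbf u)\bigr)_{hh}=\|\mathbf u_h\|_h^2-\sum_{k\neq h}\bigl(M_B^\Gamma(\mathbf u)\bigr)_{hk}$ with Hölder's inequality and \eqref{Constant-1} bounds the positive off-diagonal mass of row $h$ by $\tfrac14\|\mathbf u_h\|_h^2$, which yields the \emph{quantitative} strict diagonal dominance
\[
\bigl(M_B^\Gamma(\mathbf u)\bigr)_{hh}-\sum_{k\in\Gamma,\,k\neq h}\bigl|\bigl(M_B^\Gamma(\mathbf u)\bigr)_{hk}\bigr|\ \ge\ \tfrac12\|\mathbf u_h\|_h^2\ \ge\ \tfrac S2\,C_1\qquad\text{for all }h\in\Gamma,
\]
the last inequality by Lemma \ref{Bounded}. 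Varah's bound for strictly diagonally dominant matrices then gives $\bigl\|\bigl(M_B^\Gamma(\mathbf u_n)\bigr)^{-1}\bigr\|_\infty\le 2/(SC_1)$ uniformly, hence $\mu^n\to 0$. Finally, since $\sum_{h\in\Gamma}\|(\mathbf u_n)_h\|_h^2=4J_\Gamma(\mathbf u_n)$ stays bounded, the functionals $G_h'(\mathbf u_n)$ are bounded in the dual, so $\|J_\Gamma'(\mathbf u_n)\|_\ast\le\varepsilon_n+\|\mu^n\|_\infty\sum_{h\in\Gamma}\sup_n\|G_h'(\mathbf u_n)\|_\ast\to 0$; together with $J_\Gamma(\mathbf u_n)\to c_\Gamma$ this is exactly \eqref{Inequality16-1}.
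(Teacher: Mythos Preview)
Your proof is correct and follows essentially the same approach as the paper: localize to the sublevel set $\{J_\Gamma\le 2\overline C\}\cap\mathcal N_\Gamma$ (closed by Lemma~\ref{Bounded}), apply Ekeland's principle to obtain Lagrange multipliers, test against the block directions to get $M_B^\Gamma(\mathbf u_n)\mu^n=o(1)$, and kill the multipliers via the quantitative strict diagonal dominance $(M_B^\Gamma(\mathbf u))_{hh}-\sum_{k\neq h}|(M_B^\Gamma(\mathbf u))_{hk}|\ge \tfrac{S}{2}C_1$, which is exactly the estimate the paper derives in \eqref{Matrix-2-7-1}--\eqref{Matrix-3-1}. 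The only difference is in how the multipliers are shown to vanish: you invoke Varah's bound to control $\|(M_B^\Gamma(\mathbf u_n))^{-1}\|_\infty$ uniformly and conclude directly, whereas the paper passes to a subsequential limit matrix $\widetilde B=(a_{hk})$, observes that the diagonal dominance survives in the limit so $\widetilde B$ is positive definite, and then argues $(\widetilde B+o(1))\mu^n=o(1)\Rightarrow\mu^n\to 0$. Your route is slightly cleaner in that it avoids extracting subsequences, but both arguments rest on the same estimate and are essentially equivalent.
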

\begin{proof}[\bf{Proof}]
We adapt the idea of the proof \cite[Lemma 2.5]{Tavares 2016-1} to the critical case.
Notice that $J_\Gamma$ is coercive and bounded from below on $\mathcal{N}_\Gamma$. By Lemma \ref{Pre-1} we can assume that $J_\Gamma(\mathbf{u}_{n})\leq 2\overline{C}$ for $n$ large enough. Then by the Ekeland variational principle (which we can use, since $\mathcal{N}_\Gamma\cap \left\{\mathbf{u}: J_\Gamma(\mathbf{u})\leq 2\overline{C}\right\}$ is a closed set by Lemma \ref{Bounded}), there exists a minimizing sequence $\{\mathbf{u}_{n}\}\subset\mathcal{N}_\Gamma$ satisfying
\begin{equation}\label{Inequality14}
J_\Gamma(\mathbf{u}_{n})\rightarrow c_\Gamma,\quad
J_\Gamma'(\mathbf{u}_{n})-\sum_{k\in \Gamma}\lambda_{k,n}\Psi'_{k}(\mathbf{u}_{n})=o(1), \text{ as } n\rightarrow \infty,
\end{equation}
where
\begin{equation}\label{Func-1}
\Psi_{k}(\mathbf{u}):= \|\mathbf{u}_{k}\|_{k}^{2}-\sum_{h\in \Gamma}\sum_{(i,j)\in I_{k}\times I_{h}}\int_{\Omega}\beta_{ij}u^{2}_{i}u^{2}_{j}.
\end{equation}
Since $J_\Gamma(\mathbf{u}_{n})\leq 2\overline{C}$ for $n$ large enough, $\{u^{n}_i\}$ is uniformly bounded in $H^1_0(\Omega)$. We suppose that, up to a subsequence,
\begin{equation*}
u_{i}^{n}\rightarrow u_{i} \text{ weakly in } H^{1}_{0}(\Omega),
\end{equation*}
and
\begin{equation}\label{limit-1}
\lim_{n\rightarrow\infty}\sum_{(i,j)\in I_{h}\times I_{k}}\int_{\Omega}\beta_{ij}| u_{i}^{n}|^{2}|u_{j}^{n}|^{2}:= a_{hk}, \quad \widetilde{B}:= (a_{hk})_{ h,k\in \Gamma}.
\end{equation}
Let us prove that $\widetilde{B}$ is positive definite.

We claim that
\begin{align}\label{Matrix-2-7-1}
\sum_{(i,j)\in I_{k}^{2}}\int_{\Omega}\beta_{ij}| u_{i}^{n}|^{2}|u_{j}^{n}|^{2} -\sum_{h\neq k}\left|\sum_{(i,j)\in I_{k}\times I_{h}}\int_{\Omega}\beta_{ij}| u_{i}^{n}|^{2}|u_{j}^{n}|^{2}\right|\geq \frac{S}{2}C_{1},
\end{align}
where $C_{1}$ is defined in Lemma \ref{Bounded}. We can prove this claim by performing exactly as in the proof of \eqref{3-p-7}, due to the fact that $\mathbf{u}_{n}\subset\mathcal{N}_\Gamma$ and $\sum_{k\in\Gamma}\|\mathbf{u}_{k}^{n}\|_{k}^{2}\leq 8\overline{C}$.
Then we see from \eqref{Matrix-2-7-1} that
\begin{equation}\label{Matrix-3-1}
a_{kk}-\sum_{h\neq k}|a_{hk}|\geq \frac{S}{2}C_{1}>0  ~~k\in \Gamma,
\end{equation}
which yields that $\widetilde{B}$ is positive definite.

Consider $\overline{\mathbf{u}}^k_n$ defined by
\begin{equation}
\overline{u}^k_{i,n}:=
\begin{cases}
u_{i}^n ~~\text{ if } i\in I_k,\\
0 ~~\text{ if } i\not\in I_k.
\end{cases}
\end{equation}
Testing the second equation in \eqref{Inequality14}, together with \eqref{limit-1} we get that
\begin{equation}\label{limit-2}
\mathbf{o}(1)=M_B^\Gamma(\mathbf{u}_n)\mathbf{\lambda}^\Gamma_n=(\widetilde{B}+\mathbf{o}(1))\mathbf{\lambda}^\Gamma_n,
\end{equation}
where $\mathbf{\lambda}^\Gamma_n:=(\lambda_{k,n})_{ |\Gamma|\times 1}$, $k\in \Gamma$. Denote by $(\mathbf{\lambda}^\Gamma_n)'$ the transpose of $\mathbf{\lambda}^\Gamma_n$. Multiplying the equation \eqref{limit-2} by $(\mathbf{\lambda}^\Gamma_n)'$, we deduce from the positive definiteness of $\widetilde{B}$ that
\begin{equation}
\mathbf{o}(1)|(\mathbf{\lambda}^\Gamma_n)'|\geq C|(\mathbf{\lambda}^\Gamma_n)'|^2+\mathbf{o}(1)|(\mathbf{\lambda}^\Gamma_n)'|^2,
\end{equation}
where $C$ is independent on $n$. It follows that $\lambda_{k,n}\rightarrow 0$ as $n\rightarrow \infty$. Observe that ${\Psi_{k}'(\mathbf{u}_n)}$ is a uniformly bounded family of operators and that $\{u^{n}_i\}$ is uniformly bounded in $H^1_0(\Omega)$; then by \eqref{Inequality14} we get that $J_\Gamma'(\mathbf{u}_{n})\rightarrow 0$. Therefore, $\mathbf{u}_{n}$ is a standard Palais-Smale sequence.
\end{proof}

\section{Proof of the main theorems}\label{sec4}
As stated in the introduction, we prove Theorem \ref{Theorem-1} by induction in the number of sub-groups. We start in the next subsection by proving the first induction step.
\subsection{Proof of Theorem \ref{Theorem-1} for the case of one group} \label{subsec4.1}
Given $h=1,\ldots, m$, consider the following system
\begin{equation}\label{System-h-1}
\begin{cases}
-\Delta u_{i}+\lambda_{i}u_{i}=\sum_{j \in I_{h}}\beta_{ij}u_{j}^{2}u_{i}  \text{ in } \Omega,\\
u_{i}\in H^{1}_{0}(\Omega)  \quad \forall i\in I_{h}.
\end{cases}
\end{equation}
We will consider the levels $c_\Gamma$ with $|\Gamma|=1$. We recall that, with $\Gamma=\{h\}$ (and dropping the parenthesis in the notations from now on):
\begin{equation*}
J_{h}(\mathbf{u}):=\int_{\Omega}\frac{1}{2}\sum_{i\in I_{h}}(|\nabla u_{i}|^{2}+\lambda_{i}u_{i}^{2})-\frac{1}{4}\sum_{(i,j)\in I_{h}^{2}}
\beta_{ij}u_{j}^{2}u_{i}^{2}\, dx,
\end{equation*}
and
\begin{equation}\label{Energy-h}
c_{h}:= \inf_{\mathbf{u}\in \mathcal{N}_{h}}J_{h}(\mathbf{u})>0,
\end{equation}
where
\begin{equation*}
\mathcal{N}_{h}:=\Big\{\mathbf{u}\in (H^{1}_{0}(\Omega))^{a_{h}-a_{h-1}}:\mathbf{u}\neq \mathbf{0} \text{ and }
\langle \nabla J_{h}(\mathbf{u}),\mathbf{u}\rangle=0 \Big\}.
\end{equation*}
From Theorem \ref{Energy Estimates} we know that the following lemma holds.
\begin{lemma}\label{Energy Estimation-h}
Assume that $\beta_{ij}\geq0$ for every $(i,j)\in I_{h}^{2}, i\neq j$, $\beta_{ii}>0, i\in I_{h}$. Then
\begin{equation*}
c_{h}<l_{h}-\delta.
\end{equation*}
\end{lemma}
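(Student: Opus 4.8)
The plan is to obtain this estimate as a direct specialization of Theorem \ref{Energy Estimates}. The key observation is that the level $c_h$ in \eqref{Energy-h} and the limiting level $l_h$ in \eqref{eq:Ground State-2} involve only the coefficients $\beta_{ij}$ with $i,j\in I_h$ and the numbers $\lambda_i$ with $i\in I_h$: the functional $J_h$, the Nehari set $\mathcal{N}_h$, the functional $E_h$ and the constraint $\mathcal{M}_h$ make no reference to any index outside $I_h$. Consequently $c_h$ coincides with the level ``$c$'' of \eqref{Manifold-1}--\eqref{Minimizer-1}, and $l_h$ with the level ``$\sum_k l_k$'' (a sum reduced to a single term), when the system \eqref{System-h-1} is viewed as a stand-alone system of $|I_h|$ equations equipped with the trivial one-group decomposition $\mathbf{a}=(0,|I_h|)$.

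The first step is to make this reindexing precise and to check the hypotheses. In the one-group situation one has $\mathcal{K}_2=\emptyset$ and $\mathcal{K}_1=I_h^2\setminus\{(i,i):i\in I_h\}$, so the assumptions $\beta_{ij}\geq 0$ for $(i,j)\in I_h^2$ with $i\neq j$, together with $\beta_{ii}>0$ for $i\in I_h$ (the latter being part of the standing assumption \eqref{eq:beta_ij}), are exactly the hypotheses under which Theorem \ref{Energy Estimates} is proved, and there is no condition to impose on $\mathcal{K}_2$ since it is empty. Applying Theorem \ref{Energy Estimates} to this reduced system then gives $c_h<l_h-\delta_h$, where, reading off \eqref{4-p-9}, one may take $\delta_h=\tfrac{1}{16}C^h\varepsilon^2|\ln\varepsilon|$ for $\varepsilon$ small, with $C^h=8\sum_{i\in I_h}(X_0)_i^2(f_{max}^h)^{-1}|\lambda_i|$. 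This constant is strictly positive because $\lambda_i<0$ for every $i$ by \eqref{eq:coefficients}, so $|\lambda_i|>0$, and because $X_0\in\mathcal{X}_h$ is a unit vector, so at least one of its components is nonzero. Since the constant $\delta$ of Theorem \ref{Energy Estimates} equals $\tfrac{1}{16}\min_{1\le k\le m}\{C^k\}\varepsilon^2|\ln\varepsilon|$, which is $\le\delta_h$, the inequality $c_h<l_h-\delta_h$ yields $c_h<l_h-\delta$, as claimed; the fact that $\delta$ depends only on $\beta_{ij}\geq 0$ for $(i,j)\in I_h^2$ and on $\beta_{ii},\lambda_i$ with $i\in I_h$ is read off the formula for $C^h$.

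I do not expect a genuine obstacle here: the content of the lemma is entirely contained in Theorem \ref{Energy Estimates}, and the proof amounts to bookkeeping. The one point that deserves a line of justification is that $c_h$ and $l_h$ are truly insensitive to the presence of the remaining groups, so that the reduction to a one-group system is legitimate; this is immediate by inspection of \eqref{Energy-h}, \eqref{eq:Ground State-2} and of the definitions of $J_h$, $\mathcal{N}_h$, $E_h$ and $\mathcal{M}_h$.
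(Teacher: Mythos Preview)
Your proposal is correct and matches the paper's own approach, which consists of a single line: ``From Theorem \ref{Energy Estimates} we know that the following lemma holds.'' Note that you can shorten your argument: rather than reindexing to a stand-alone system and then comparing $\delta_h$ with $\delta$, you can simply take $\Gamma=\{h\}$ in the general statement of Theorem \ref{Energy Estimates}, which already provides the uniform $\delta$ and gives $c_{\{h\}}=c_h<l_h-\delta$ directly.
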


The following lemma is the counterpart of Br\'ezis-Lieb Lemma (see \cite{Brezis Lieb lemma}) for two component and its proof can be found in \cite[p. 538]{Zou 2015}.
\begin{lemma}\label{BL}
Let $u_{n}\rightharpoonup u, v_{n}\rightharpoonup v$ in $H^{1}_{0}(\Omega)$ as $n\rightarrow \infty$. Then, up to a subsequence, there holds
\begin{equation*}
\lim_{n\rightarrow \infty}\int_{\Omega}(|u_{n}|^{2}|v_{n}|^{2}-|u_{n}-u|^{2}|v_{n}-v|^{2}-|u|^{2}|v|^{2})=0.
\end{equation*}
\end{lemma}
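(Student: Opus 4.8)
The plan is to prove the slightly cleaner statement that, along a subsequence,
\[
\int_\Omega u_n^2 v_n^2 = \int_\Omega (u_n-u)^2(v_n-v)^2 + \int_\Omega u^2 v^2 + o(1),
\]
which is exactly the assertion, since only squares of the functions occur. The whole difficulty is that in the critical dimension $N=4$ the embedding $H^1_0(\Omega)\hookrightarrow L^4(\Omega)$ is \emph{not} compact, so unlike in the subcritical Br\'ezis--Lieb argument we cannot pass to the limit in $L^4$; the strategy is to use only the compact embedding $H^1_0(\Omega)\hookrightarrow L^2(\Omega)$ together with the elementary fact (Willem, Prop. 5.4.7, already invoked in this paper) that a sequence bounded in $L^2(\Omega)$ which converges a.e.\ converges weakly in $L^2(\Omega)$ to its pointwise limit.

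First I would extract a subsequence. Both $u_n$ and $v_n$ are bounded in $H^1_0(\Omega)$, hence in $L^4(\Omega)$, and by the compact embedding into $L^2(\Omega)$ we may assume, after passing to a subsequence, that $u_n\to u$ and $v_n\to v$ a.e.\ in $\Omega$. Set $w_n:=u_n-u$ and $z_n:=v_n-v$, so that $w_n\rightharpoonup 0$ and $z_n\rightharpoonup 0$ weakly in $H^1_0(\Omega)$, they are bounded in $L^4(\Omega)$, and $w_n\to 0$, $z_n\to 0$ a.e. Consequently $u_n^2$, $v_n^2$, $w_n^2$, $z_n^2$ are bounded in $L^2(\Omega)$ and converge a.e.\ to $u^2$, $v^2$, $0$, $0$, so that
\[
u_n^2\rightharpoonup u^2,\qquad v_n^2\rightharpoonup v^2,\qquad w_n^2\rightharpoonup 0,\qquad z_n^2\rightharpoonup 0 \quad\text{weakly in }L^2(\Omega).
\]

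The key step, and the one I expect to be the crux of the whole lemma, is to upgrade the convergence of the \emph{differences} $u_n^2-w_n^2$ and $v_n^2-z_n^2$ to \emph{strong} convergence in $L^2(\Omega)$. Since $u\in L^4(\Omega)$ we have $u^2\in L^2(\Omega)$, so the weak convergence $w_n^2\rightharpoonup 0$ in $L^2(\Omega)$ gives $\int_\Omega u^2 w_n^2\to 0$; combined with the algebraic identity $u_n^2-w_n^2 = 2uu_n-u^2$, equivalently $(u_n^2-w_n^2)-u^2 = 2u w_n$, this yields
\[
\big\|(u_n^2-w_n^2)-u^2\big\|_{L^2(\Omega)} = 2\,\Big(\int_\Omega u^2 w_n^2\Big)^{1/2}\longrightarrow 0 ,
\]
i.e.\ $u_n^2-w_n^2\to u^2$ strongly in $L^2(\Omega)$, and symmetrically $v_n^2-z_n^2\to v^2$ strongly in $L^2(\Omega)$. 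This is precisely where the failure of strong $L^4$-convergence of $w_n$ is circumvented: although $w_n$ is only weakly small in $L^4$, the fixed function $u^2$ is a genuine $L^2$ test function, which is all that is needed.

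Finally I would conclude with the decomposition
\[
\int_\Omega\big(u_n^2v_n^2 - w_n^2 z_n^2\big) = \int_\Omega (u_n^2-w_n^2)\,v_n^2 + \int_\Omega w_n^2\,(v_n^2-z_n^2),
\]
and pass to the limit term by term using the principle that the pairing of a strongly convergent and a weakly convergent sequence in $L^2(\Omega)$ converges to the pairing of the limits. In the first integral $u_n^2-w_n^2\to u^2$ strongly and $v_n^2\rightharpoonup v^2$ weakly, so it tends to $\int_\Omega u^2 v^2$; in the second $w_n^2\rightharpoonup 0$ weakly and $v_n^2-z_n^2\to v^2$ strongly, so it tends to $0$. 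Adding, $\int_\Omega(u_n^2v_n^2 - w_n^2 z_n^2)\to\int_\Omega u^2 v^2$, which is the claim. No smallness of the $\beta_{ij}$ and no positivity hypotheses enter this argument.
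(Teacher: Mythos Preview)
Your argument is correct. The algebraic identity $(u_n^2-w_n^2)-u^2=2uw_n$ together with $w_n^2\rightharpoonup 0$ in $L^2(\Omega)$ (bounded in $L^2$ plus a.e.\ convergence) indeed gives strong $L^2$-convergence $u_n^2-w_n^2\to u^2$, and then the strong/weak pairing in your decomposition does the job cleanly.

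As for comparison with the paper: the paper does not actually supply a proof of this lemma. It merely states that ``its proof can be found in \cite[p.~538]{Zou 2015}''. So your proposal is a self-contained argument where the paper gives only an external citation. The proof in Chen--Zou proceeds along similar lines (writing out the cross terms and using that products like $uw_n$ go to zero in $L^2$), so your approach is in the same spirit, but you have organized it particularly transparently via the strong-versus-weak $L^2$ pairing.
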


\begin{thm}\label{System-h}
Assume that $\beta_{ij}\geq0$ for every $(i,j)\in I_{h}^{2}, i\neq j$, $\beta_{ii}>0, i\in I_{h}$. Then
 $c_{h}$ is achieved by a nonnegative $\mathbf{u}\in \mathcal{N}_{h}$. Moreover, any minimizer is a nonnegative solution of \eqref{System-h-1}.
\end{thm}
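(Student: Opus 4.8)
The plan is to use the concentration--compactness principle together with the strict energy bound of Lemma \ref{Energy Estimation-h} to recover compactness below the critical threshold $l_h$. First I would apply Proposition \ref{PS Sequence} with $\Gamma=\{h\}$ to produce a Palais--Smale sequence $\{\mathbf{u}_n\}\subset\mathcal{N}_h$ with $J_h(\mathbf{u}_n)\to c_h$ and $J_h'(\mathbf{u}_n)\to 0$. On $\mathcal{N}_h$ one has $\|\mathbf{u}_n\|_h^2=4J_h(\mathbf{u}_n)\to 4c_h$, so $\{\mathbf{u}_n\}$ is bounded in $\mathbb{H}$; passing to a subsequence, $u_i^n\rightharpoonup u_i$ in $H^1_0(\Omega)$, and $u_i^n\to u_i$ in $L^2(\Omega)$ and a.e., for $i\in I_h$. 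Since $u_i^n(u_j^n)^2$ is bounded in $L^{4/3}(\Omega)$ (by H\"older) and converges a.e. to $u_iu_j^2$, it converges weakly in $L^{4/3}(\Omega)$; testing the relation $J_h'(\mathbf{u}_n)\to 0$ against a fixed $\varphi_i\in H^1_0(\Omega)\hookrightarrow L^4(\Omega)$ then shows that $\mathbf{u}=(u_i)_{i\in I_h}$ is a (possibly trivial) weak solution of \eqref{System-h-1}; in particular $\langle\nabla J_h(\mathbf{u}),\mathbf{u}\rangle=0$, so $\|\mathbf{u}\|_h^2=\sum_{(i,j)\in I_h^2}\beta_{ij}\int_\Omega u_i^2u_j^2$.

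Next I would carry out a Br\'ezis--Lieb splitting. Writing $\mathbf{w}_n:=\mathbf{u}_n-\mathbf{u}\rightharpoonup\mathbf{0}$, the expansion $\int_\Omega|\nabla u_i^n|^2=\int_\Omega|\nabla u_i|^2+\int_\Omega|\nabla w_i^n|^2+o(1)$ together with $\int_\Omega(u_i^n)^2=\int_\Omega u_i^2+o(1)$ (Rellich) gives $\|u_i^n\|_i^2=\|u_i\|_i^2+\int_\Omega|\nabla w_i^n|^2+o(1)$, while Lemma \ref{BL} applied to each pair gives $\int_\Omega(u_i^n)^2(u_j^n)^2=\int_\Omega u_i^2u_j^2+\int_\Omega(w_i^n)^2(w_j^n)^2+o(1)$. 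Combining these with $\langle\nabla J_h(\mathbf{u}_n),\mathbf{u}_n\rangle=0$ and $\langle\nabla J_h(\mathbf{u}),\mathbf{u}\rangle=0$ yields $\int_\Omega|\nabla\mathbf{w}_n|^2=\sum_{(i,j)\in I_h^2}\beta_{ij}\int_\Omega(w_i^n)^2(w_j^n)^2+o(1)$, and substituting into the energy gives, along a further subsequence for which $A:=\lim_n\int_\Omega|\nabla\mathbf{w}_n|^2$ exists, $c_h=J_h(\mathbf{u})+\tfrac14 A$. Extending $\mathbf{w}_n$ by zero to $\mathbb{R}^4$ and applying the vector Sobolev inequality \eqref{Vector Sobolev Inequality} gives $4l_h A\le A^2$, hence either $A=0$ or $A\ge 4l_h$.

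The decisive point is to rule out $A\ge 4l_h$. In that case $c_h=J_h(\mathbf{u})+\tfrac14 A\ge J_h(\mathbf{u})+l_h$. If $\mathbf{u}=\mathbf{0}$ this forces $c_h\ge l_h$, contradicting $c_h<l_h-\delta$ from Lemma \ref{Energy Estimation-h}; if $\mathbf{u}\neq\mathbf{0}$, then $\mathbf{u}\in\mathcal{N}_h$, so $J_h(\mathbf{u})\ge c_h$ and therefore $l_h\le 0$, contradicting $l_h>0$. Hence $A=0$, and together with $\int_\Omega(w_i^n)^2\to 0$ this gives $\mathbf{u}_n\to\mathbf{u}$ strongly in $\mathbb{H}$; consequently $\langle\nabla J_h(\mathbf{u}),\mathbf{u}\rangle=0$ and $J_h(\mathbf{u})=c_h>0$, so $\mathbf{u}\neq\mathbf{0}$, $\mathbf{u}\in\mathcal{N}_h$, and $c_h$ is attained.

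Finally, to conclude that minimizers are nonnegative solutions, I would note that $J_h$ and $\mathcal{N}_h$ are invariant under $\mathbf{u}\mapsto|\mathbf{u}|:=(|u_i|)_{i\in I_h}$ --- they depend on the components only through $u_i^2$ and $\beta_{ij}\ge 0$ for $(i,j)\in I_h^2$ --- so replacing $\mathbf{u}$ by $|\mathbf{u}|$ yields a nonnegative minimizer; and since for $\Gamma=\{h\}$ the matrix $M_B^{\{h\}}(\mathbf{u})=\big(\|\mathbf{u}\|_h^2\big)$ is a positive scalar, every element of $\mathcal{N}_h$ lies in $\mathcal{E}_{\{h\}}$, so Lemma \ref{Lagrange} applies and any minimizer is a critical point of $J_h$, hence a solution of \eqref{System-h-1}; the strong maximum principle then upgrades any nontrivial component to a positive one. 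The main obstacle throughout is the lack of compactness of $H^1_0(\Omega)\hookrightarrow L^4(\Omega)$, which is precisely what the strict gap $c_h<l_h-\delta$ of Lemma \ref{Energy Estimation-h} is designed to overcome, by forbidding the bubbling alternative $A\ge 4l_h$.
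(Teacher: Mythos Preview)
Your proof is correct and follows essentially the same approach as the paper: a Palais--Smale sequence from Proposition \ref{PS Sequence}, Br\'ezis--Lieb splitting via Lemma \ref{BL}, the vector Sobolev inequality \eqref{Vector Sobolev Inequality} to force the dichotomy $A=0$ or $A\ge 4l_h$, and Lemma \ref{Energy Estimation-h} to exclude the latter. The only cosmetic difference is that the paper first shows $\mathbf{u}\neq\mathbf{0}$ (ruling out only the case $\mathbf{u}=\mathbf{0}$) and then squeezes $c_h\le J_h(\mathbf{u})\le J_h(\mathbf{u})+\tfrac14 d_h=c_h$ to deduce $d_h=0$, whereas you split the exclusion of $A\ge 4l_h$ into the two cases $\mathbf{u}=\mathbf{0}$ and $\mathbf{u}\neq\mathbf{0}$; both routes are equivalent.
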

\begin{proof}[\bf{Proof}]
From Proposition \ref{PS Sequence} there exists $\{\mathbf{u}_{n}\}$ such that
\begin{equation*}
\lim_{n\rightarrow \infty}J_{h}(\mathbf{u}_{n})=c_{h},\quad \lim_{n\rightarrow \infty}J'_{h}(\mathbf{u}_{n})=0.
\end{equation*}
It is standard to see that $\{u^{n}_{i}\}$ is bounded in $H^{1}_{0}(\Omega)$, and so we may assume that $u^{n}_{i}\rightharpoonup u_{i}$ weakly in $H^{1}_{0}(\Omega)$, $i\in I_{h}$. Passing to a subsequence, we may assume that
\begin{equation*}
u_{i}^{n}\rightharpoonup u_{i}, ~weakly ~in ~L^{4}(\Omega), ~i\in I_{h}.
\end{equation*}
\begin{equation*}
(u_{i}^{n})^{2}\rightharpoonup u_{i}^{2}, ~weakly ~in ~L^{2}(\Omega), ~i\in I_{h}.
\end{equation*}
\begin{equation}\label{h-1}
u_{i}^{n}\rightarrow u_{i}, ~strongly ~in ~L^{2}(\Omega), ~i\in I_{h}.
\end{equation}
Set $v_{i}^{n}=u_{i}^{n}-u_{i}$, and so
\begin{equation}\label{h-2-1}
v_{i}^{n}\rightharpoonup 0 \text{ weakly } \text{ in } H^{1}_{0}(\Omega), \quad v_{i}^{n}\rightarrow 0 \text{ strongly in }  L^{2}(\Omega).
\end{equation}
Note that since $\mathbf{u}_{n}\in \mathcal{N}_{h}$, then we have
\begin{equation}\label{h-3-2}
\sum_{i\in I_{h}}\int_{\Omega}|\nabla u_{i}^{n}|^{2}+\lambda_{i}|u_{i}^{n}|^{2}\, dx=
\sum_{(i,j)\in I_{h}^{2}}\int_{\Omega}\beta_{ij}| u_{i}^{n}|^{2}|u_{j}^{n}|^{2}.
\end{equation}
We deduce from \eqref{h-1} and \eqref{h-2-1} that
\begin{equation}\label{h-2-2}
\int_{\Omega}|\nabla u_{i}^{n}|^{2}=\int_{\Omega}|\nabla v_{i}^{n}|^{2}+|\nabla u_{i}|^{2}+o(1), \quad \int_{\Omega}|u_{i}^{n}|^{2}=\int_{\Omega}u_{i}^{2}+o(1),
\end{equation}
and by Lemma \ref{BL} we have
\begin{equation}\label{h-2}
\int_{\Omega}| u_{i}^{n}|^{2}|u_{j}^{n}|^{2}=\int_{\Omega}| v_{i}^{n}|^{2}|v_{j}^{n}|^{2}+ \int_{\Omega}| u_{i}|^{2}|u_{j}|^{2}+o(1).
\end{equation}
It follows from \eqref{h-3-2}-\eqref{h-2} and $J_{h}'(\mathbf{u}_{h})\mathbf{u}_{h}=0$ that
\begin{equation}\label{h-3}
\sum_{i\in I_{h}}\int_{\Omega}|\nabla v_{i}^{n}|^{2}-
\sum_{(i,j)\in I_{h}^{2}}\int_{\Omega}\beta_{ij}| v_{i}^{n}|^{2}|v_{j}^{n}|^{2}=o(1).
\end{equation}
We deduce from \eqref{h-2-2}-\eqref{h-3} that
\begin{equation}\label{thm-4-2-0}
J_h(\mathbf{u}_{n})=J_h(\mathbf{u}_h)+\frac{1}{4}\sum_{i\in I_{h}}\int_{\Omega}|\nabla v_{i}^{n}|^{2}+o(1).
\end{equation}
Passing to a subsequence, we may assume that
\begin{equation*}
\lim_{n\rightarrow\infty}\sum_{i\in I_{h}}\int_{\Omega}|\nabla v_{i}^{n}|^{2}=d_{h}.
\end{equation*}
Combining this with \eqref{thm-4-2-0} yields
\begin{equation}\label{S-h-1}
0\leq J_{h}(\mathbf{u}_{h})\leq J_{h}(\mathbf{u}_{h})+\frac{1}{4}d_{h}=\lim_{n\rightarrow \infty}J_{h}(\mathbf{u}_{n})=c_{h}.
\end{equation}

Next, we prove that $\mathbf{u}_{h}\neq \mathbf{0}$. By contradiction, assume that $\mathbf{u}_{h}\equiv \mathbf{0}$. By \eqref{S-h-1} we have $d_{h}=4c_h>0$. Then by \eqref{Vector Sobolev Inequality} and \eqref{h-3} we get that
\begin{align*}
\sum_{i\in I_{h}}\int_{\Omega}|\nabla v_{i}^{n}|^{2} =
\sum_{(i,j)\in I^{2}_{h}}\int_{\Omega}\beta_{ij}| v_{i}^{n}|^{2}|v_{j}^{n}|^{2}+o(1)
   \leq (4l_{h})^{-1}\left(\sum_{i\in I_{h}}\int_{\Omega}|\nabla v_{i}^{n}|^{2}\right)^{2}+o(1),
\end{align*}
which yields that $d_{h}\geq 4l_{h}$. It follows from \eqref{S-h-1} that
\begin{equation*}
c_{h}=\frac{1}{4}d_{h}\geq l_{h},
\end{equation*}
which contradicts Lemma \ref{Energy Estimation-h}. Therefore, $\mathbf{u}_{h}\neq\mathbf{0}$ and $\mathbf{u}_{h}\in \mathcal{N}_{h}$. Then we see from \eqref{S-h-1} that
\begin{equation*}
c_{h}\leq J_{h}(\mathbf{u}_{h})\leq\lim_{n\rightarrow \infty}J_{h}(\mathbf{u}_{n})=c_{h}.
\end{equation*}
That is, $J_{h}(\mathbf{u}_{h})=c_{h}$, and so $J_{h}(|\mathbf{u}_{h}|)=c_{h}$. By Lemma \ref{Lagrange} we get that $|\mathbf{u}_{h}|$ is a nonnegative solution of \eqref{System-h-1}.
\end{proof}

\subsection{Proof of Theorem \ref{Theorem-1} in the general case}\label{subsec4.2}
Set
$$
\Lambda_{4}:= \frac{\delta S^{2}}{8\overline{C}\sum_{l=1}^{m}l_{h}},
$$
where $l_{h}$ is defined in \eqref{eq:Ground State-2}, $\overline{C}$ is defined in \eqref{Pre-2}, and $\delta$ is defined in Theorem \ref{Energy Estimates}.
Let
\begin{equation}\label{Constant-8}
\Lambda:= \min\{\Lambda_{3},\Lambda_{4}\},
\end{equation}
where $\Lambda_{3}$ is defined in \eqref{Constant-4-1}. From now on, we assume that $-\infty<\beta_{ij}< \Lambda$ for every $(i,j)\in \mathcal{K}_{2}$.
\begin{proof}[\bf Conclusion of the proof of Theorem \ref{Theorem-1}]
We will proceed by mathematical induction on the number of sub-groups. Denote by $p$ the number of sub-groups considered (that is, the cardinality of $|\Gamma|$) and $p=1,2,\ldots,m$. We have proved in the previous subsection that the result holds true for $p=1$, i.e, for all levels $c_\Gamma$ with $|\Gamma|=1$. We suppose by induction hypothesis that the result holds true for every level $c_\Gamma$ with $|\Gamma| \leq p$, for some  $1\leq p\leq m-1$. In particular, observe that the estimates of Theorem \ref{Energy Estimation-1,2,m-1} hold for $c_\Gamma$. We want to prove Theorem \ref{Theorem-1} for $c_G$, with $|G|=p+1$. Without loss of generality, we will show it for $G=\{1,\ldots, p+1\}$. Observe that the estimates of Theorem \ref{Energy Estimation-1,2,m-1} hold for $c_G$.

From Proposition \ref{PS Sequence} we know that there exists a sequence $\{\mathbf{u}_{n}\}\subset\mathcal{N}_G$ satisfying
\begin{equation}\label{thm-1}
\lim_{n\rightarrow\infty}J_G(\mathbf{u}_{n})=c_G,\quad \lim_{n\rightarrow\infty}J_G'(\mathbf{u}_{n})=0.
\end{equation}
Since $\{u^{n}_i\}$ is uniformly bounded in $H^1_0(\Omega)$, we may assume that $u^{n}_i\rightharpoonup u_i$ weakly in $H^1_0(\Omega)$. Passing to a subsequence, we may assume that
\begin{equation*}
u_{i}^{n}\rightharpoonup u_{i}, ~weakly ~in ~L^{4}(\Omega), ~i\in I_{h}, ~h=1, \ldots, p+1.
\end{equation*}
\begin{equation*}
(u_{i}^{n})^{2}\rightharpoonup u_{i}^{2}, ~weakly ~in ~L^{2}(\Omega), ~i\in I_{h}, ~h=1, \ldots, p+1.
\end{equation*}
\begin{equation*}
u_{i}^{n}\rightarrow u_{i}, ~strongly ~in ~L^{2}(\Omega), ~i\in I_{h}, ~h=1, \ldots, p+1.
\end{equation*}
Set $v_{i}^{n}=u_{i}^{n}-u_{i}$, so that
\begin{equation}\label{thm-2-1}
v_{i}^{n}\rightharpoonup 0 \text{ in } H^{1}_{0}(\Omega).
\end{equation}
Note that $\mathbf{u}_{n}\in\mathcal{N}_G$. Then for $\forall h=1, \ldots, p+1$ we have
\begin{equation}\label{thm-3-2}
\sum_{i\in I_{h}}\int_{\Omega}|\nabla u_{i}^{n}|^{2}+\lambda_{i}|u_{i}^{n}|^{2}=
\sum_{k=1}^{p+1}\sum_{(i,j)\in I_{h}\times I_{k}}\int_{\Omega}\beta_{ij}| u_{i}^{n}|^{2}|u_{j}^{n}|^{2}.
\end{equation}
We deduce from \eqref{thm-2-1} that
\begin{equation}\label{thm-2-2}
\int_{\Omega}|\nabla u_{i}^{n}|^{2}=\int_{\Omega}|\nabla v_{i}^{n}|^{2}+|\nabla u_{i}|^{2}+o(1), \quad \int_{\Omega}|u_{i}^{n}|^{2}=\int_{\Omega}u_{i}^{2}+o(1),
\end{equation}
and by Lemma \ref{BL} we have
\begin{equation}\label{thm-2}
\int_{\Omega}| u_{i}^{n}|^{2}|u_{j}^{n}|^{2}=\int_{\Omega}| v_{i}^{n}|^{2}|v_{j}^{n}|^{2}+ \int_{\Omega}| u_{i}|^{2}|u_{j}|^{2}+o(1).
\end{equation}
It follows from \eqref{thm-3-2}-\eqref{thm-2} and $J_G'(\mathbf{u})\mathbf{u}=0$ that
\begin{equation}\label{thm-3}
\sum_{i\in I_{h}}\int_{\Omega}|\nabla v_{i}^{n}|^{2}-
\sum_{k=1}^{p+1}\sum_{(i,j)\in I_{h}\times I_{k}}\int_{\Omega}\beta_{ij}| v_{i}^{n}|^{2}|v_{j}^{n}|^{2}=o(1), ~h=1, \ldots, p+1,
\end{equation}
and
\begin{equation}\label{thm-4}
J_G(\mathbf{u}_{n})=J_G(\mathbf{u})+E_G(\mathbf{v}_{n})+o(1),
\end{equation}
where we recall that $E_G$ is defined in \eqref{eq:def_of_E}.  Passing to a subsequence, we may assume that
\begin{equation*}
\lim_{n\rightarrow\infty}\sum_{i\in I_{h}}\int_{\Omega}|\nabla v_{i}^{n}|^{2}=d_{h}, ~h=1, \ldots, p+1.
\end{equation*}
Hence, by \eqref{thm-3} we have $E_G(\mathbf{v}_{n})= \frac{1}{4}\sum_{h=1}^{p+1}d_{h}+o(1)$. Letting $n\rightarrow\infty$ in \eqref{thm-4}, we see that
\begin{equation}\label{thm-5}
0\leq J_G(\mathbf{u})\leq J_G(\mathbf{u})+\frac{1}{4}\sum_{h=1}^{p+1}d_{h}=\lim_{n\rightarrow\infty}J_G(\mathbf{u}_{n})=c_G\leq \overline{C},
\end{equation}
and so we can assume that $J_G(\mathbf{u}_{n})\leq 2\overline{C}$ for $n$ large enough. Next, we show that $\mathbf{u}\in \mathcal{N}_G$ using a contradiction argument.

{\bf Case 1}. $\mathbf{u}=(\mathbf{u}_{1},\cdots, \mathbf{u}_{h},\cdots, \mathbf{u}_{p+1})\equiv \mathbf{0}$.

Firstly, we claim that $d_{h}>0, ~h=1, \ldots, p+1$. In fact, by contradiction we assume that $d_{h_{1}}=0$ for some $h_{1}$. Then we get that $v_{i}^{n}\rightarrow 0$ strongly in $H^{1}_{0}(\Omega)$ and so $u_{i}^{n}\rightarrow 0$ strongly in $H^{1}_{0}(\Omega), \forall i\in I_{h_{1}}$. Thus
$$
\lim_{n\rightarrow\infty}\sum_{i\in I_{h_{1}}}|u_{i}^{n}|_{4}^{2}=0,
$$
which contradicts Lemma \ref{Bounded}. Therefore, $d_{h}>0, ~h=1, \ldots, p+1$.
Then we see from \eqref{Vector Sobolev Inequality}, \eqref{thm-2-2}-\eqref{thm-3} that
\begin{align}\label{thm-5-8}
\sum_{i\in I_{h}}\int_{\Omega}|\nabla v_{i}^{n}|^{2}&=\sum_{(i,j)\in I_{h}^{2}}\int_{\Omega}\beta_{ij}| v_{i}^{n}|^{2}|v_{j}^{n}|^{2}+
\sum_{k\neq h}\sum_{(i,j)\in I_{h}\times I_{k}}\left(\int_{\Omega}\beta_{ij}| u_{i}^{n}|^{2}|u_{j}^{n}|^{2}+o(1)\right)+o(1)\nonumber\\
&\leq (4l_{h})^{-1}\left(\sum_{i\in I_{h}}\int_{\Omega}|\nabla v_{i}^{n}|^{2}\right)^{2}+\frac{\Lambda}{S^{2}}\sum_{k\neq h}\sum_{(i,j)\in I_{h}\times I_{k}}
\|u_{i}^{n}\|_{i}^{2}\|u_{j}^{n}\|_{j}^{2}+o(1)\nonumber\\
&\leq (4l_{h})^{-1}\left(\sum_{i\in I_{h}}\int_{\Omega}|\nabla v_{i}^{n}|^{2}\right)^{2}+\frac{8\Lambda\overline{C}}{S^{2}}\sum_{i\in I_{h}}\int_{\Omega}|\nabla u_{i}^{n}|^{2}+o(1)\nonumber\\
&\leq (4l_{h})^{-1}\left(\sum_{i\in I_{h}}\int_{\Omega}|\nabla v_{i}^{n}|^{2}\right)^{2}+\frac{8\Lambda\overline{C}}{S^{2}}\sum_{i\in I_{h}}\int_{\Omega}|\nabla v_{i}^{n}|^{2}+o(1).
\end{align}
Hence, $d_{h}\geq 4l_{h}\left(1-\frac{8\overline{C}}{S^{2}}\Lambda\right), h=1, \ldots, p+1$. It follows from \eqref{Constant-8} and \eqref{thm-5} that
\begin{equation}\label{thm-5-3}
c_G=\frac{1}{4}\sum_{h=1}^{p+1}d_{h}\geq \sum_{h=1}^{p+1}l_{h}\left(1-\frac{8\overline{C}}{S^{2}}\Lambda\right)>\sum_{h=1}^{p+1}l_{h}-\delta,
\end{equation}
which contradicts Theorem \ref{Energy Estimates}. Hence, $\mathbf{u}\not \equiv \mathbf{0}$.

{\bf Case 2}. Only one component of $\mathbf{u}$ is not zero.

Without loss of generality, we may assume that $\mathbf{u}_{1}\neq \mathbf{0}, \mathbf{u}_{2}=\cdots=\mathbf{u}_{h}=\cdots= \mathbf{u}_{p+1}=\mathbf{0}$.  Similarly to Case 1, we can get that $d_{h}>0, h=2,\ldots, p+1$. Moreover, similarly to \eqref{thm-5-8}, we deduce from \eqref{Vector Sobolev Inequality} and \eqref{thm-3} that
$d_{h}\geq 4l_{h}\left(1-\frac{8\overline{C}}{S^{2}}\Lambda\right), h=2,\ldots,p+1$. Note that $\mathbf{u}_{1}$ is a solution of \eqref{System-h-1} with $h=1$, and so $J_G(\mathbf{u}_{1}, \mathbf{0}, \cdots,\mathbf{0})\geq c_{1}$. It follows from \eqref{Constant-8} and \eqref{thm-5} that
\begin{equation*}
c_G\geq J_G(\mathbf{u}_{1}, \mathbf{0}, \cdots, \mathbf{0})+\frac{1}{4}\sum_{h=2}^{p+1} d_{h}\geq c_{1}+\sum_{h=2}^{p+1}l_{h}\left(1-\frac{8\overline{C}}{S^{2}}\Lambda\right)>c_{1}+\sum_{h=2}^{p+1}l_{h}-\delta,
\end{equation*}
a contradiction with Theorem \ref{Energy Estimation-1,2,m-1}. Therefore, Case 2 is impossible.

{\bf Case 3}. Only $q$ components of $\mathbf{u}$ are not zero, $q=2,3,\ldots, p$.

Without loss of generality, we may assume that $\mathbf{u}_{1}\neq \mathbf{0}, \mathbf{u}_{2}\neq \mathbf{0}, \cdots, \mathbf{u}_{q}\neq \mathbf{0}, \mathbf{u}_{q+1}=\cdots= \mathbf{u}_{h}=\cdots= \mathbf{u}_{p+1}=\mathbf{0}$. Similarly to Case 1, we can get that $d_{h}>0, h=q+1,\ldots,p+1$. Thus by \eqref{Vector Sobolev Inequality} and \eqref{thm-3} we get that
\begin{align*}
\sum_{i\in I_{h}}\int_{\Omega}|\nabla v_{i}^{n}|^{2}& =
\sum_{(i,j)\in I^{2}_{h}}\int_{\Omega}\beta_{ij}| v_{i}^{n}|^{2}|v_{j}^{n}|^{2}+\sum_{k\neq h}\sum_{(i,j)\in I_{h}\times I_{k}}\left(\int_{\Omega}\beta_{ij}| u_{i}^{n}|^{2}|u_{j}^{n}|^{2}+o(1)\right)+o(1) \\
   &\leq (4l_{h})^{-1}\left(\sum_{i\in I_{h}}\int_{\Omega}|\nabla v_{i}^{n}|^{2}\right)^{2}+\frac{8\Lambda\overline{C}}{S^{2}}\sum_{i\in I_{h}}\int_{\Omega}|\nabla v_{i}^{n}|^{2}+o(1),
\end{align*}
which yields that $d_{h}\geq 4l_{h}\left(1-\frac{8\overline{C}}{S^{2}}\Lambda\right), h=q+1,\ldots,p+1$. Note that $(\mathbf{u}_{1},\mathbf{u}_{2},\cdots, \mathbf{u}_{q}, \mathbf{0}, \cdots,\mathbf{0})$ is a solution of \eqref{S-system} with the number of sub-group is $q$, and so $J_G(\mathbf{u}_{1},\mathbf{u}_{2}, \cdots, \mathbf{u}_{q}, \mathbf{0}, \cdots,\mathbf{0})\geq c_{1\ldots q}$. It follows from \eqref{Constant-8} and \eqref{thm-5} that
\begin{equation*}
c_G\geq J_G(\mathbf{u}_{1}, \cdots, \mathbf{u}_{q},\mathbf{0}, \cdots, \mathbf{0})+\frac{1}{4}\sum_{h=q+1}^{p+1} d_{h}\geq c_{1\ldots q}+\sum_{h=q+1}^{p+1}l_{h}\left(1-\frac{8\overline{C}\Lambda}{S^{2}}\right)>c_{1\ldots q}+\sum_{h=q+1}^{p+1}l_{h}-\delta,
\end{equation*}
which contradicts Theorem \ref{Energy Estimation-1,2,m-1}. Therefore, Case 3 is impossible.

Since Case 1, Case 2 and Case 3 are impossible, then we get that $\mathbf{u}_{h}\neq \mathbf{0}$ for any $h=1,2,\ldots, p+1$. Therefore, $\mathbf{u} \in \mathcal{N}_G$. It follows from \eqref{thm-5} that
\begin{equation}\label{thm-5-5}
c_G\leq J_G(\mathbf{u})\leq J_G(\mathbf{u})+\frac{1}{4}\sum_{h=1}^{p+1}d_{h}=\lim_{n\rightarrow\infty}J_G(\mathbf{u}_{n})=c_G,
\end{equation}
that is, $J_G(\mathbf{u})=c_G$. It is easy to get that
\begin{equation*}
|\mathbf{u}|\in \mathcal{N}_G ~~and ~~J_G(|\mathbf{u}|)=c_G.
\end{equation*}
It follows from Lemma \ref{Lagrange} that $|\mathbf{u}|$ is a solution to system \eqref{S-system} with the number of sub-group is $p+1$.
\end{proof}

\begin{remark}\label{5-3}
We deduce from \eqref{thm-5-5} that $d_{h}=0$ for any $h=1,2,\ldots, p+1$. Then we know that $v^{n}_i\rightarrow 0$ strongly in $H^1_0(\Omega)$, and so
 $u^{n}_i\rightarrow u_i$ strongly in $H^1_0(\Omega)$.
\end{remark}

\subsection{Proof of the remaining results}

All the remaining results follow either directly from Theorem \ref{Theorem-1} or exactly as results in other papers.

Regarding Corollary \ref{Classification}, the existence of ground states follow directly from Theorem \ref{Theorem-1} in the case $m=1$ and $\mathbf{a}=(0,d)$. Indeed, $\tilde c$ is achieved by a solution of \eqref{S-system}. Therefore $\tilde c=\inf\{J(\mathbf{u}):\  J'(\mathbf{u})= 0,\ \mathbf{u}\in H^1_0(\Omega;\R^d), \mathbf{u}\neq \mathbf{0} \}$, the ground state level. The classification result, on the other hand, follows applying exactly as in \cite[Theorem 2.1]{Tavares 2016}  (\emph{cf.} also the proof of Theorem \ref{limit system-6-1} in Section \ref{sec2}).

Corollary \ref{Theorem-1-1} is a direct consequence of Theorem \ref{Theorem-1} in the case $m=d$, $\mathbf{a}=(0,1,\ldots, d)$.

Finally, having established that $c$ is achieved in Theorem \ref{Theorem-1}, the proofs of Theorems \ref{Theorem-2} and \ref{Theorem-3} follow word by word the ones of Theorem 1.4 and Theorem 1.5 in \cite{Tavares 2016-1}.

\medbreak

\noindent{\bf Acknowledgments}

H. Tavares is partially supported by the Portuguese government through FCT/Portugal - Funda\c c\~ao para a Ci\^encia e a Tecnologia, I.P. under the project PTDC/MAT-PUR/28686/2017 and through the grant UID/MAT/04561/2013.

S. You would like to thank the China Scholarship Council of China (NO.201806180084)
for financial support during the period of his overseas study and to express his gratitude to
the Department of Mathematics, Faculty of Sciences of the University of Lisbon for its kind hospitality.

\bibliographystyle{plain}
\end{document}